\newtheorem{theorem}{Theorem}
\newtheorem{example}{Example}
\newtheorem{remark}{Remark}
\newtheorem{question}{Question}
\newtheorem{lemma}{Lemma}
\newtheorem{proposition}{Proposition}
\newtheorem{corollary}{Corollary}
\theoremstyle{definition}
\newtheorem{definition}{Definition}
\theoremstyle{definition}
\begin{document}

\title[]{Teichm\"uller Structures and Dual Geometric Gibbs Type Measure Theory for Continuous Potentials}

\author{Yunping Jiang}

\address{Department of Mathematics\\
Queens College of the City University of New York\\
Flushing, NY 11367-1597\\
and\\
Department of Mathematics\\
Graduate School of the City University of New York\\
365 Fifth Avenue, New York, NY 10016}
\email{yunping.jiang@qc.cuny.edu}

\thanks{2010 Mathematics Subject Classification. Primary 37D53;30F60, Secondary 37F10;30C62}

\keywords{circle endomorphism, symbolic space, dual symbolic
space, dual derivative, dual Gibbs measure, quasisymmetic
homeomorphism, symmetric homeomorphism}

\thanks{The research is partially supported by PSC-CUNY awards.}

\begin{abstract}
The Gibbs measure theory for smooth potentials is an old and
beautiful subject and has many important applications in modern
dynamical systems. For continuous potentials, it is impossible to
have such a theory in general. However, we develop a dual
geometric Gibbs type measure theory for certain continuous
potentials in this paper following some ideas and techniques from
Teichm\"uller theory for Riemann surfaces. Furthermore, we prove
that the space of those continuous potentials has a Teichm\"uller
structure. Moreover, this Teichm\"uller structure is a complete
structure and is the completion of the space of smooth potentials
under this Teichm\"uller structure. Thus our dual geometric Gibbs
type theory is the completion of the Gibbs measure theory for smooth
potentials from the dual geometric point of view.
\end{abstract}

\maketitle

\section{Introduction}

Starting from the celebrated work of Sinai~\cite{Sinai1,Sinai2}
and Ruelle~\cite{Ruelle1,Ruelle2}, a mathematical theory of Gibbs
states, an  important idea originally from physics, became an
important research topic in modern dynamical systems. Later,
Bowen~\cite{Bowen} brought Sinai and Ruelle's work into the study
of Axiom A dynamical systems. Their work finally led to a
definition of an SRB measure for a dynamical system. A very
important feature of a Gibbs measure (or an SRB measure) is that
it is an equilibrium state.

In the original study of Gibbs measures, a potential must be
smooth, which means it must be at least $C^{\alpha}$ for some
$0<\alpha\leq 1$. Later the smoothness condition was relaxed to
the summability condition in Walters' paper~\cite{Walters} (see
also~\cite{FanJiang}) but it is essentially the same as the smooth
case. For a long time, I have been interested in a study of a
Gibbs type theory for continuous potentials. But this is
impossible in general. However, we will show that it is possible
for a certain class of continuous potentials if we bring in some
ideas and techniques from Teichm\"uller theory and quasiconformal
mapping theory.

A basic idea in the Teichm\"uller theory is to use measurable
coordinates to view Riemann surfaces. That is, by fixing a Riemann
surface, all other Riemann surfaces homeomorphic to this Riemann
surface can be viewed from measurable coordinates on this Riemann
surface up to isotopy. A fundamental result in the study of
Teichm\"uller theory for Riemann surfaces is the measurable
Riemann mapping theorem as we describe below.

A measurable function $\mu$ on the Riemann sphere $\hat{\mathbb
C}$ is called a Beltrami coefficient if its $L^{\infty}$-norm
$k=\|\mu\|_{\infty} < 1$. The corresponding equation
$H_{\overline{z}}=\mu H_{z}$ is called the Beltrami equation. The
measurable Riemann mapping theorem says that the Beltrami equation
has a solution $H$ which is a quasiconformal homeomorphism of
$\hat{\mathbb C}$ whose quasiconformal dilatation is less than or
equal to $K=(1+k)/(1-k)$. It is called a $K$-quasiconformal
homeomorphism.

The study of the measurable Riemann mapping theorem has a long
history since Gauss considered in the 1820's the connection with
the problem of finding isothermal coordinates for a given surface.
As early as 1938, Morrey~\cite{Morrey} systematically studied
homeomorphic $L^{2}$-solutions of the Beltrami equation. But it
took almost twenty years until in 1957 Bers~\cite{Bers} observed
that these solutions are quasiconformal (refer to~\cite[p.
24]{Lehto}). Finally the existence of a solution to the Beltrami
equation under the most general possible circumstance, namely, for
measurable $\mu$ with $\|\mu\|_{\infty}<1$, was shown by
Bojarski~\cite{Bojarski} and by Ahlfors and
Bers~\cite{AhlforsBers}. In this generality the existence theorem
is sometimes called the measurable Riemann mapping theorem.

In this paper, we will borrow many ideas and techniques in the
Teichm\"uller theory and the quasiconformal mapping theory to
develop a Gibbs type measure theory for certain continuous
potentials. We will prove that the space of these continuous
potentials have Teichm\"uller structures. We will prove that for
such a continuous potential, there is a Gibbs type measure which
is an equilibrium state. Some properties about these Gibbs type measure are also studied.

We organize the paper as follows. In \S2, we define a uniformly
symmetric circle endomorphism and prove three examples. In particular,
for the third example, we mention and prove a more general version of the result
which we proved in~\cite{Jiang9}. This more general result (Theorem~\ref{KatokConj})
says that a $C^{1}$ circle endomorphism H\"older conjugate to a topologically expanding
circle endomorphism itself is expanding.

In \S3, we
review some classic results in dynamical systems which eventually
imply that there is only one topological model for the dynamics of
all circle endomorphisms of the same degree. In the same section,
we study the bounded nearby geometric property. The conclusion of
this property is that a conjugacy is quasisymmetric. This enables
us to define a Teichm\"uller structure on a space of circle
endomorphisms.

In \S4, we define the dual symbolic space and
geometrical models defined on it which we call dual derivatives.

In \S5, we define the Teichm\"uller space of smooth expanding
circle endomorphisms and the Teichm\"uller space of uniformly
symmetric circle endomorphisms. Furthermore, we prove that the
first Teichm\"uller space equals the space of all H\"older
continuous dual derivatives and the second Teichm\"uller space
equals the space of all continuous dual derivatives. Moreover, the
second one is the completion of the first one under the
Teichm\"uller metric. To prove this result and make this paper self-contained,
we state a special case (Theorem~\ref{onepointrigidity}) of the result about differentiable rigidity, which has being developed
in~\cite{Jiang1,Jiang3,Jiang4,Jiang5} for a more general situation. For the sake of the completeness of this paper,
we give a detailed proof. We also state the main result (Theorem~\ref{uaatouac}) in~\cite{GardinerJiangAAACCE} in \S6. That is, in \S6,
we first define an asymptotically conformal circle endomorphism and prove that a circle endomorphism is uniformly symmetric if and only if it is asymptotically conformal. After this, we prove the completion result in the end of \S6.

In \S7, we prove that the the Teichm\"uller space of uniformly
symmetric circle endomorphisms is contractible. In a remark in this section, we also state and give a outline of the proof about the contractility of the space of all $C^{1+\alpha}$ circle expanding circle endomorphisms and the Teichm\"uller space of smooth expanding
circle endomorphisms.

In \S8 and \S9, we define the linear model
for a uniformly symmetric circle endomorphism. We study the
relation between the linear model and the dual derivative.
We use this relation to set up a one-to-one correspondence between the Teichm\"uller space of
uniformly symmetric circle endomorphisms and the space of all continuous dual derivatives.
Furthermore, we give a characterization of a dual derivative.

In \S10, we define the maximum distance on the Teichm\"uller space of uniformly
symmetric circle endomorphisms and compare this maximum distance with the Teichm\"uler distance.

In \S11, we give a brief review of the Gibbs
measure theory for the smoothness case and define a dual invariant
measure. In the same section, we post several questions which we
study in this paper. In \S12, we give a review of the $g$-measure
theory. In \S13, we returned to the Gibbs measure theory for the
smoothness case but from the dual geometric point of view.

Finally, in \S14, we prove the existence of a dual geometric Gibbs
type measure for every continuous potential in the Teichm\"uller
space of uniformly symmetric circle endomorphisms. This measure
can be viewed as a coordinate structure such that the dynamical
system is smooth under this structure. Note that we start from a
uniformly symmetric circle endomorphism which may be very
singular. Most important, this measure is an equilibrium state.
This result could be served as the role of the Riemann mapping theorem
on the dual symbolic space.

In \S15, we study values of metric entropy for the Teichm\"uller space of uniformly
symmetric circle endomorphisms. The maximum value of the metric entropy is $log d$,
which is the topological entropy. We prove that the infimum of the metric entropy
for the Teichm\"uller space of uniformly symmetric circle endomorphisms is zero.

\vspace*{20pt}

\noindent {\bf Acknoledgement.} During this research, I have had
many conversations with Fred Gardiner, Aihua Fan, Guizhen Cui,
Jihua Ma, Anthony Quas, and Huyi Hu. I also learned many
techniques which I used in this paper from Dennis Sullivan during
his many lectures at the CUNY Graduate Center. I would like to
express my sincere thanks to everyone. This research is partially
supported by grants from NSF, PSC-CUNY, and Bai Ren Ji Hua from
the Chinese Academy of Sciences.

\section{Circle endomorphisms}

Let $T=\{ z\in {\mathbb C}\; |\; |z|=1\}$ be the unit circle in
the complex plane ${\mathbb C}$. Suppose
$$
f: T\to T
$$
is an orientation-preserving covering map of degree $d\geq 2$. We
call it in this paper a circle endomorphism. Suppose
$$
h: T\to T
$$
is an orientation-preserving homeomorphism. We call it in this
paper a circle homeomorphism.

For a circle endomorphism $f$, it has a fixed point. We will
assume throughout this paper that $f(1)=1$.

The universal cover of $T$ is the real line ${\mathbb R}$ with a
covering map
$$
\pi (x) = e^{2\pi i x}: {\mathbb R} \to T.
$$
Then every circle endomorphism $f$ can be lifted to an
orientation-preserving homeomorphism
$$
F: {\mathbb R}\to {\mathbb R}, \quad F(x+1)=F(x)+d, \quad \forall
x\in {\mathbb R}.
$$
We will assume throughout this paper that $F(0)=0$. Then there is
a one-to-one correspondence between $f$ and $F$. Therefore, we
also call such an $F$ a circle endomorphism.

Every orientation-preserving circle homeomorphism $h$ can be
lifted to an orientation-preserving homeomorphism
$$
H: {\mathbb R}\to {\mathbb R}, \quad H(x+1)=H(x)+1.
$$
We will assume throughout this paper that $0\leq H(0)<1$. Then
there is a one-to-one correspondence between $h$ and $H$.
Therefore, we also call such an $H$ a circle homeomorphism.

A circle endomorphism $f$ is $C^{k}$ for $k\geq 1$ if the
$k^{th}$-derivative $F^{(k)}$ exists and is continuous. And,
furthermore, it is called $C^{k+\alpha}$ for some $0< \alpha \leq
1$ if $F^{(k)}$ is $\alpha$-H\"older continuous, that is,
$$
\sup_{x\neq y\in {\mathbb R}} \frac{|F^{(k)} (x) -F^{(k)}
(y)|}{|x-y|^{\alpha}} =\sup_{x\neq y\in [0,1]} \frac{|F^{(k)} (x)
-F^{(k)} (y)|}{|x-y|^{\alpha}} <\infty.
$$
A $C^{1}$ circle endomorphism $f$ is called expanding if there are
constants $C>0$ and $\lambda >1$ such that
$$
(F^{n})'(x) \geq C\lambda^{n}, \quad n=1, 2, \cdots.
$$

A circle homeomorphism $h$ is called quasisymmetric if there is a
constant $M\geq 1$ such that
$$
M^{-1} \leq \frac{|H(x+t)-H(x)|}{|H(x)-H(x-t)|}\leq M, \quad
\forall x\in {\mathbb R}, \; \forall t>0.
$$
Furthermore, it is called symmetric if there is a bounded function
$\varepsilon (t)>0$ for $t>0$ such that $\varepsilon (t) \to
0^{+}$ as $t\to 0^{+}$ and such that
$$
\frac{1}{1+\varepsilon (t)} \leq
\frac{|H(x+t)-H(x)|}{|H(x)-H(x-t)|}\leq 1+\varepsilon (t), \quad
\forall x\in {\mathbb R}, \; \forall t>0.
$$

\vspace{10pt}
\begin{example}~\label{smoothimplysymmetry}
A $C^{1}$-diffeomorphism of $T$ is symmetric.
\end{example}

However, the class of symmetric homeomorphisms is larger than the
class of $C^{1}$-diffeomorphisms. For example, a symmetric
homeomorphism may not necessarily be absolutely continuous.

\vspace*{10pt}
\begin{definition}~\label{uniformlysymmetry}
A circle endomorphism $f$ is called uniformly symmetric if there
is a bounded function $\varepsilon (t)>0$ for $t>0$ such that
$\varepsilon (t) \to 0^{+}$ as $t\to 0^{+}$ and such that
$$
\frac{1}{1+\varepsilon (t)} \leq
\frac{|F^{-n}(x+t)-F^{-n}(x)|}{|F^{-n}(x)-F^{-n}(x-t)|}\leq
1+\varepsilon (t)
$$
for all $x\in {\mathbb R}$, all $t>0$, and all $n>0$.
\end{definition}

\vspace*{10pt}
\begin{example}~\label{holderexample}
A $C^{1+\alpha}$, for some $0<\alpha\leq 1$, circle expanding
endomorphism $f$ is uniformly symmetric. Furthermore,
$\varepsilon(t)\leq D t^{\alpha}$ for some constant $D>0$ and
$0\leq t\leq 1$.
\end{example}

\begin{proof}
Since $F(x+1) = F(x)+d$, then $F'(x+1)=F'(x)$ is a periodic
function. Since $F$ is $C^{1+\alpha}$, we have a constant
$C_{1}>0$ such that
$$
| F'(x)-F'(y) |\leq C_{1} |x-y|^{\alpha}, \quad \forall x,y\in
{\mathbb R}.
$$
Since $F$ is expanding, we have a constant $C_{2}>0$ and
$\lambda>1$ such that
$$
(F^{n})'(x) \geq C_{2}\lambda^{n}, \quad \forall x\in {\mathbb
R},\;\; n>0.
$$

For any $x, y\in {\mathbb R}$ and $n>0$, let $x_{k}=F^{-k}(x)$ and
$y_{k}=F^{-k}(y)$, $0\leq k\leq n$. Then
$$
\Big| \log \frac{(F^{-n})'(x)}{(F^{-n})'(y)} \Big|= \Big| \log
\frac{(F^{n})'(y_{n})}{(F^{n})'(x_{n})}\Big| \leq \sum_{k=1}^{n} |
\log F'(x_{k})-\log F'(y_{k})|
$$
$$
\leq \frac{1}{C_{2}\lambda} \sum_{k=1}^{n} | F'(x_{k})-F'(y_{k})|
\leq \frac{C_{1}}{C_{2}\lambda} \sum_{k=1}^{n}
|x_{k}-y_{y}|^{\alpha} \leq \frac{C_{1}}{C_{2}^{1+\alpha}\lambda}
\sum_{k=1}^{n} \lambda^{-\alpha k} |x-y|^{\alpha}.
$$
Let
$$
C = \frac{C_{1}\lambda^{\alpha}}{C_{2}^{1+\alpha}
(\lambda^{\alpha}-1)\lambda}.
$$
Then we have the following H\"older distortion property:
\begin{equation}~\label{smoothdistortion}
e^{-C |x-y|^{\alpha}} \leq \frac{(F^{-n})'(x)}{(F^{-n})'(y)}\leq
e^{C|x-y|^{\alpha}}, \quad \forall x, y \in {\mathbb R}, \;\;
\forall n>0.
\end{equation}
Furthermore, let
$$
\varepsilon (t) = \left\{ \begin{array}{ll} e^{Ct^{\alpha}}-1,
& 0<t\leq 1,\\
e^{C}-1, & t>1.
\end{array}\right.
$$
Then $\varepsilon (t)>0$ is a bounded function such that
$\varepsilon (t) \to 0$ as $t\to 0^{+}$ and such that
$$
\frac{1}{1+\varepsilon (t)} \leq
\frac{(F^{-n})'(\xi)}{(F^{-n})'(\eta)} =
\frac{|F^{-n}(x+t)-F^{-n}(x)|}{|F^{-n}(x)-F^{-n}(x-t)|}\leq
1+\varepsilon (t)
$$
for all $x\in {\mathbb R}$, all $t>0$, and all $n>0$,
where $\xi$ and $\eta$ are two numbers in $[0,1]$. Thus $F$ is
uniformly symmetric. Furthermore, one can see that
$\varepsilon(t)\leq D t^{\alpha}$ for some constant $D>0$ and
$0\leq t\leq 1$. We proved the example.
\end{proof}

\begin{remark}
The uniformly symmetric condition is a weaker condition than the
$C^{1+\alpha}$ expanding condition for some $0<\alpha\leq 1$. For
example, a uniformly symmetric circle endomorphism could be
totally singular, that is, it could map a set with positive
Lebesgue measure to a set with zero Lebesgue measure. But we will
see in the rest of the paper, many dynamical aspects, from the
daul geometric point of view, of a $C^{1+\alpha}$ expanding circle
endomorphism for some $0<\alpha\leq 1$ will be preserved by a
uniformly symmetric circle endomorphism.
\end{remark}

\vspace*{10pt}
Another example of a uniformly symmetric circle
endomorphism is a $C^{1}$ Dini expanding circle endomorphism as
follows. Suppose $f$ is a $C^{1}$ circle endomorphism. The
function
$$
\omega(t) =\sup_{|x-y|\leq t} |F'(x)-F'(y)|, \quad t>0,
$$
is called the modulus of continuity of $F'$. Then $f$ is called
$C^{1}$ Dini if
$$
\int_{0}^{1}\frac{\omega(t)}{t} dt <\infty.
$$

Suppose $f$ is a $C^{1}$ Dini expanding circle endomorphism. Let
$C>0$ and $\lambda >1$ be two constants such that
$$
(F^{n})'(x) \geq C\lambda^{n}, \quad x\in {\mathbb R},\;\; n\geq
1.
$$
Define
$$
\tilde{\omega}(t) =\sum_{n=1}^{\infty} \omega
(C^{-1}\lambda^{-n}t).
$$
Then
$$
\tilde{\omega}(t) \leq \int_{0}^{\infty} \omega
(C^{-1}\lambda^{-x} t) dx = \frac{1}{\log \lambda}
\int_{0}^{C^{-1}\lambda^{-1} t} \frac{\omega(y)}{y} dy <\infty
$$
for all $0\leq t\leq 1$ and $\tilde{\omega}(t) \to 0$ as $t\to 0$.

\vspace*{10pt}
\begin{example}~\label{diniexample} A $C^{1}$ Dini circle expanding endomorphism
$f$ is uniformly symmetric. Furthermore, $\varepsilon(t)\leq D
\tilde{\omega}(t)$ for some constant $D>0$ and $0\leq t\leq 1$.
\end{example}

\begin{proof}
Since $F(x+1) = F(x)+d$, then $F'(x+1)=F'(x)$ is a periodic
function. Since $f$ is $C^{1}$ expanding, there are two constants
$C_{1}>0$ and $\lambda
>1$ such that
$$
(F^{n})' (x) \geq C_{1}\lambda^{n}, \quad \forall x\in {\mathbb
R},\;\; n>0.
$$

For any $x, y\in {\mathbb R}$ and $n>0$, let $x_{k}=F^{-k}(x)$ and
$y_{k}=F^{-k}(y)$, $0\leq k\leq n$. Then
$$
\Big| \log \frac{(F^{-n})'(x)}{(F^{-n})'(y)} \Big|= \Big| \log
\frac{(F^{n})'(y_{n})}{(F^{n})'(x_{n})}\Big| \leq \sum_{k=1}^{n} |
\log F'(x_{k})-\log F'(y_{k})|
$$
$$
\leq \frac{1}{C_{1}\lambda} \sum_{k=1}^{n} | F'(x_{k})-F'(y_{k})|
\leq \frac{1}{C_{1}\lambda} \sum_{k=1}^{n} \omega (C^{-1}
\lambda^{-k} |x-y|).
$$
Let $C=1/(C_{1}\lambda)$. Then we have the following Dini
distortion property:
\begin{equation}~\label{dinidistortion}
e^{-C\tilde{\omega}(|x-y|)}\leq
\frac{(F^{-n})'(x)}{(F^{-n})'(y)}\leq e^{C\tilde{\omega}(|x-y|)},
\quad \forall x, y\in {\mathbb R}, \;\; \forall n>0.
\end{equation}
Furthermore, let
$$
\varepsilon (t) = \left\{ \begin{array}{ll}
e^{C\tilde{\omega}(t)}-1, & 0<t\leq 1\\
e^{C\tilde{\omega}(1)}-1, & t>1.
\end{array}\right.
$$
Then $\varepsilon (t)>0$ is a bounded function such that
$\varepsilon (t) \to 0$ as $t\to 0^{+}$ and such that
$$
\frac{1}{1+\varepsilon (t)} \leq
\frac{(F^{-n})'(\xi)}{(F^{-n})'(\eta)} =
\frac{|F^{-n}(x+t)-F^{-n}(x)|}{|F^{-n}(x)-F^{-n}(x-t)|}\leq
1+\varepsilon (t)
$$
for all $x\in {\mathbb R}$, $t>0$, and $n>0$,
where $\xi$ and $\eta$ are two numbers in $[0,1]$. Thus $F$ is
uniformly symmetric. Furthermore, we have a constant $D>0$ such
that $\varepsilon(t)\leq D \tilde{\omega}(t)$ for all $0\leq t\leq
1$. We proved the example.
\end{proof}

A circle homeomorphism $h$ is called H\"older if there is a constant $0<\alpha\leq 1$ such that
\begin{equation}~\label{hconj}
\sup_{x\not= y} \frac{|H(x)-H(y)|}{|x-y|^{\alpha}} <\infty.
\end{equation}
We say that a circle endomorphism $f$ is H\"older conjugate to another circle endomorphism $g$
if there is a H\"older circle homeomorphism $h$ such that
$$
f\circ h=h\circ g.
$$
A circle endomorphism $g$ is called topologically expanding if there are constants $C>0$ and $\lambda >1$ such that
\begin{equation}~\label{topexp}
|G^{n} (x) -G^{n}(y)| \geq C\lambda^{n}|x-y|, \quad \forall x, y\in [0,1], \quad \forall n\geq 0.
\end{equation}
Following the proof of our result in~\cite{Jiang9} about Katok's conjecture, we have the following more general result.

\vspace*{10pt}
\begin{theorem}~\label{KatokConj} Suppose that $f$ is a $C^{1}$ circle endomorphism and suppose that $f$ is H\"older conjugate to a
topologically expanding circle endomorphism $g$. Then $f$ itself is expanding.
\end{theorem}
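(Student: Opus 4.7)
The plan is to show the H\"older conjugacy forces $f$ to be uniformly expanding by transferring the topological expansion of $g$ through $h$, and then upgrading an average-type bound to a pointwise one using the $C^{1}$ regularity of $f$.

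First, I transfer the topological expansion from $g$ to $f$. For any short interval $I\subset T$, set $J:=h^{-1}(I)$. The H\"older bound (\ref{hconj}) gives $|J|\geq(|I|/M)^{1/\alpha}$, and (\ref{topexp}) yields $|G^{n}(J)|\geq C\lambda^{n}|J|$ until saturation at some fixed scale $\delta_{0}$. Thus there exists $n_{0}\leq c\log(1/|I|)$, with $c:=1/(\alpha\log\lambda)$, such that $|G^{n_{0}}(J)|\geq\delta_{0}$. Since $h$ is a homeomorphism of a compact set, $h^{-1}$ is uniformly continuous, so for some fixed $\delta_{1}>0$ we have $|h(L)|\geq\delta_{1}$ whenever $|L|\geq\delta_{0}$. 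Consequently $|F^{n_{0}}(I)|=|h(G^{n_{0}}(J))|\geq\delta_{1}$.

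Setting $|I|\approx e^{-n/c}$ gives $|F^{n}(I)|/|I|\geq\delta_{1}\mu^{n}$ with $\mu:=\lambda^{\alpha}>1$. By the mean value theorem, every such interval contains some $\xi$ with $(F^{n})'(\xi)\geq\delta_{1}\mu^{n}$. Applying the same reasoning to backward branches of $f$, any fixed interval contracts under $F^{-n}$ at an exponential rate $\mu^{-n}$, producing nested dynamical partitions of $T$ whose pieces at level $n$ have diameter comparable to $\mu^{-n}$.

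To promote this sample-point bound to a uniform pointwise one, I use the $C^{1}$ regularity of $f$ to control distortion. On any preimage piece of diameter $\mu^{-n}$ the distortion of $F^{n}$ is controlled by the telescoping sum $\sum_{k=0}^{n-1}|\log F'(F^{k}y)-\log F'(F^{k}y')|$. Combined with the exponential contraction of these pieces under $F^{-k}$ and the uniform continuity of $\log F'$, this sum is bounded independently of $n$. Hence $(F^{n})'(y)\geq c_{1}\mu^{n}$ for every $y\in T$ and every $n$, i.e., $f$ is expanding.

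The main obstacle is the distortion estimate in the last step. Because $f$ is only assumed $C^{1}$ (not $C^{1+\alpha}$), the H\"older distortion of Example~\ref{holderexample} does not apply directly; one must extract from the H\"older conjugacy sufficient quantitative control on the contraction rate of preimages to make the distortion sum converge uniformly, as in the Dini argument of Example~\ref{diniexample}. This is the technical heart of the argument in \cite{Jiang9}, and the present theorem is obtained by running that proof while tracking the dependence on the H\"older exponent $\alpha$.
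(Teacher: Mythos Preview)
Your outline has a genuine gap at the distortion step. You assert that the sum
\[
\sum_{k=0}^{n-1}\bigl|\log F'(F^{k}y)-\log F'(F^{k}y')\bigr|
\]
is bounded independently of $n$, appealing to uniform continuity of $\log F'$ and the exponential shrinking of partition pieces. But if $\omega$ denotes the modulus of continuity of $\log F'$, your sum is essentially $\sum_{k}\omega(\mu^{-k})$, and this is finite precisely when the Dini condition holds. For a bare $C^{1}$ map $\omega$ may decay arbitrarily slowly (e.g.\ $\omega(t)\sim 1/\log(1/t)$), and then the sum diverges. So neither the H\"older distortion of Example~\ref{holderexample} nor the Dini distortion of Example~\ref{diniexample} is available, and your final paragraph acknowledges this without supplying a substitute.

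The idea you are missing, and the one the paper actually uses, is that \emph{bounded} distortion is unnecessary: \emph{subexponential} growth of the distortion is enough. Set
\[
a_{n}=\max_{J\in\varpi_{n,f}}\Bigl(\max_{x\in J}\log F'(x)-\min_{x\in J}\log F'(x)\Bigr),\qquad D_{n}=\sum_{k=1}^{n}a_{k}.
\]
Because the partition pieces shrink and $\log F'$ is uniformly continuous, $a_{n}\to 0$; by Ces\`aro averaging $D_{n}/n\to 0$, hence $E_{n}:=e^{-D_{n}}$ satisfies $E_{n}^{1/n}\to 1$. Combining this with the mean-value estimate $(F^{n})'(y_{n})\geq C_{1}^{-1}\tau_{1}^{-n}$ on each level-$n$ piece gives
\[
(F^{n})'(x)\geq E_{n}\,C_{1}^{-1}\tau_{1}^{-n}=C_{1}^{-1}\bigl(E_{n}^{1/n}\tau_{1}^{-1}\bigr)^{n},
\]
and since $E_{n}^{1/n}\tau_{1}^{-1}\to\tau_{1}^{-1}>1$, one obtains $(F^{n})'\geq C\lambda^{n}$ for suitable $C>0$, $\lambda>1$. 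The point is that the exponential gain $\tau_{1}^{-n}$ coming from the H\"older conjugacy absorbs any subexponential loss from the distortion; you do not need the distortion sum to converge.
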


Combining this theorem and Example~\ref{diniexample}, we have that

\vspace*{10pt}
\begin{example}~\label{hcexample} A $C^{1}$ Dini circle endomorphism
$f$ which is H\"older conjugate a topologically expanding circle endomorphism $g$
is uniformly symmetric.
\end{example}

\begin{proof}[Proof of Theorem~\ref{KatokConj}]
The proof of the theorem is almost similar to the proof given in~\cite{Jiang9}.
However, for the sake of the completeness of this paper, we give a detailed proof.

Suppose the degree of $g$ is $d$. Since $f$ is topologically conjugate to $g$, its degree is also $d$.
The preimage $g^{-1}(1)$ contains $d$ points and cuts $T$ into
$d$ closed intervals $\varpi_{0,g}=\{ J_{0,g}, \cdots,
J_{d-1,g}\}$. Actually $\varpi_{0,g}$ is a Markov partition in the meaning
that
\begin{itemize}
\item[i)] $T=\cup_{k=0}^{d-1} J_{k,g}$,
\item[ii)] $J_{i,g}$ and $J_{j,g}$ have disjoint interiors for $0\leq i\neq
j\leq d-1$,
\item[iii)] the restriction of $g$ on the interior of $J_{i,g}$ is one to one for every $0\leq i\leq
d-1$,
\item[iv)] $g(J_{i,g})=T$ for every $0\leq i\leq d-1$.
\end{itemize}
Thus we can generate a sequence of Markov partitions
$$
\varpi_{n,g}=g^{-n} \varpi_{0}
$$
for $n=1, 2, \cdots$. The set $\varpi_{n,g}$ contains all intervals
$J$ such that $g^{n}: J\to J_{k,g}$ for some $1\leq k\leq d$ is a
homeomorphism.

From~(\ref{topexp}), we have constants $C_{0}>0$ and $0\leq \tau_{0}<1$
such that
$$
\max_{J\in \varpi_{n,g}} |J| \leq C_{0}\tau^{n}_{0}, \quad \forall n\geq 0.
$$

Since $f$ is H\"older conjugate to $g$, we have a
homeomorphism $h$ satisfying~(\ref{hconj}) such that
$f\circ h=h\circ g$.
Let
$$
\varpi_{n,f} =\{ h(J)\;|\; J\in \varpi_{n,g}\}.
$$
Then we have a constant $C_{1}>0$ and $\tau_{1}=\tau_{0}^{\alpha}$ such that
$$
|J| \leq C_{1}\tau_{1}^{n},\quad \forall J\in \varpi_{n,f}, \quad \forall n\geq 0.
$$

We use $I$ to denote the lift interval of $J$ in the unit interval $[0,1]$.
Given any interval $J\in \varpi_{n,f}$, $F^{n}(I) =[m,m+1]$ for some integer $m\geq 0$.
For any $x,y\in I$,
$$
{\mathcal A}_{n} (x,y)=log \frac{(F^{n})'(x)}{(F^{n})'(y)} =
\sum_{i=0}^{n-1} \Big( \log F'(F^{i}(x)) -\log F'(F^{i}(y))\Big).
$$

Let
$$
a_{n}=\max_{J\in \varpi_{n,f}} \Big\{ \max_{x\in I} \log F'(x) -\min_{x\in I} \log F'(x)\Big\}
$$
and
$$
D_{n}=\sum_{i=1}^{n} a_{k} \quad \hbox{and} \quad E_{n} = e^{-D_{n}}.
$$
Then
$$
|{\mathcal A}_{n}(x,y)| \leq D_{n}
$$
since $F'$ is a periodic function of period $1$.

Since $\log F'$ is uniformly continuous on $[0,1]$, we have that $a_{n}\to 0$ as $n\to \infty$. This implies that
$$
\frac{D_{n}}{n} \to 0\quad \hbox{as}\quad n\to \infty
$$
and
$$
\sqrt[n]{E_{n}}= e^{-\frac{D_{n}}{n}} \to 1 \quad \hbox{as} \quad n\to \infty.
$$

Since $F^{n}(I) =[m,m+1]$, by the mean value theorem, we have a point $y_{n}\in I$ such that
$$
(F^{n})' (y_{n}) =1/|J| \geq C_{1}^{-1} \tau_{1}^{-n}, \quad \forall n\geq 0.
$$
This implies that
$$
(F^{n})' (x) \geq E_{n} (F^{n})' (y_{n}) \geq E_{n} C_{1}^{-1} \tau_{1}^{-n}= C_{1}^{-1} \Big( \sqrt[n]{E_{n}} \tau_{1}^{-1}\Big)^{n}, \quad \forall n\geq 0.
$$
Thus we have constants $C>0$ and $\lambda>1$ such that
$$
(F^{n})' (x) \geq C \lambda^{n}, \quad \forall n\geq 0.
$$
That is, $f$ is expanding. We proved the theorem.
\end{proof}

\section{Symbolic space and topological representation}

Suppose $f$ is a circle endomorphism of degree $d\geq 2$ with $f(1)=1$. Consider the
preimage $f^{-1}(1)$. As we have seen in the proof of Theorem~\ref{KatokConj}, $f^{-1}(1)$ cuts $T$ into $d$ closed
intervals $J_{0}$, $J_{1}$, $\cdots$, $J_{d-1}$, ordered by the
counter-clockwise order of $T$. Suppose $J_{0}$ has an endpoint
$1$. Then $J_{d-1}$ also has an endpoint $1$. Let
$$
\varpi_{0}=\{ J_{0}, J_{1}, \cdots, J_{d-1}\}.
$$
Then it is a Markov partition, that is,
\begin{itemize}
\item[{\rm i.}] $T=\cup_{k=0}^{d-1} J_{k}$,
\item[{\rm ii.}] the restriction of $f$ to the interior of $J_{i}$ is
injective for every $0\leq i\leq d-1$,
\item[{\rm iii.}] $f(J_{i})=T$ for every $0\leq i\leq d-1$.
\end{itemize}

Let $I_{0}$, $I_{1}$, $\cdots$, $I_{d-1}$ be the lifts of $J_{0}$,
$J_{1}$, $\cdots$, $J_{d-1}$ in $[0,1]$. Then we have that
\begin{itemize}
\item[{\rm i)}] $[0,1]=\cup_{k=0}^{d-1} I_{k}$,
\item[{\rm ii)}] $F(I_{i})=[i, i+1]$ for every $0\leq i\leq d-1$.
\end{itemize}
Let
$$
\eta_{0}=\{I_{0}, I_{1}, \cdots, I_{d-1}\}.
$$
Then it is a partition of $[0,1]$.

Consider the pull-back partition $\varpi_{n}=f^{-n}\varpi_{0}$ of
$\varpi_{0}$ by $f^{n}$. It contains $(d-1)^{n}$ intervals and is
also a Markov partition of $T$. Intervals $J$ in $\varpi_{n}$ can
be labeled as follows. Let $w_{n}=i_{0}i_{1}\cdots i_{n-1}$ be a
word of length $n$ of $0's$, $1's$, $\cdots$, and $(d-1)'s$. Then
$J_{w_{n}}\in \varpi_{n}$ if $f^{k}(J_{w_{n}}) \subset J_{i_{k}}$
for $0\leq k\leq n-1$. Then
$$
\varpi_{n} =\{ J_{w_{n}}\;|\; w_{n}=i_{0}i_{1}\cdots i_{n-1},
i_{k}\in\{ 0, 1, \cdots, d-1\}, k=0, 1,\cdots, d-1\}.
$$
Let $\eta_{n}$ be the corresponding lift partition of $\varpi_{n}$
in $[0,1]$ with the same labelings. Then
$$
\eta_{n} =\{ I_{w_{n}}\;|\; w_{n}=i_{0}i_{1}\cdots i_{n-1},
i_{k}\in\{ 0, 1, \cdots, d-1\}, k=0, 1,\cdots, d-1\}.
$$

Consider the space
$$
\Sigma=\prod_{n=0}^{\infty} \{ 0, 1, \cdots, d-1\}
$$
$$
=\{ w=i_{0}i_{1}\cdots i_{k} \cdots i_{n-1} \cdots \; |\; i_{k}\in
\{ 0, 1, \cdots, d-1\}, \; k=0, 1, \cdots \}
$$
with the product topology. It is a compact topological space. A
left cylinder for a fixed word $w_{n}=i_{0}i_{1}\cdots i_{n-1}$ of
length $n$ is
$$
[w_{n}] =\{ w' =i_{0}i_{1}\cdots i_{n-1}i_{n}'i_{n+1}' \cdots
\;|\; i_{n+k}' \in \{ 0, 1, \cdots, d-1\}, k=0, 1, \cdots\}
$$
All left cylinders form a topological basis of $\Sigma$. We call
it the {\em left topology}. The space $\Sigma$ with this left
topology is called the {\em symbolic space}.

For any $w=i_{0}i_{1}\cdots i_{n-1}i_{n} \cdots$, let
$$
\sigma (w) = i_{1}\cdots i_{n-1}i_{n} \cdots
$$
be the left shift map. Then $(\Sigma, \sigma)$ is called a
symbolic dynamical system.

For a point $w=i_{0}\cdots i_{n-1}i_{n} \cdots \in \Sigma$, let
$w_{n} =i_{0}\cdots i_{n-1}$. Then
$$
\cdots \subset J_{w_{n}} \subset J_{w_{n-1}} \subset \cdots
J_{w_{1}}\subset T.
$$
Since each $J_{w_{n}}$ is compact,
$$
J_{w}=\cap_{n=1}^{\infty} J_{w_{n}} \neq \emptyset.
$$
If every $J_{w}=\{ x_{w}\}$ contains only one point, then we
define the projection $\pi_{f}$ from $\Sigma$ onto $T$ as
$$
\pi_{f} (w) =x_{w}.
$$
The projection $\pi_{f}$ is 1-1 except for a countable set
$$
B=\{ w=i_{0}i_{1}\cdots i_{n-1}1000\cdots,  i_{0}i_{1}\cdots
i_{n-1}0(d-1)(d-1)(d-1)\cdots \}.
$$
From our construction, one can check that
$$
\pi_{f}\circ \sigma(w) = f \circ \pi_{f} (w), \quad w\in \Sigma.
$$

For any interval $I=[a,b]$ in $[0, 1]$, we use $|I|=b-a$ to mean
its Lebesgue length. Let
$$
\iota_{n,f}=\max_{w_{n}} |I_{w_{n}}|,
$$
where $w_{n}$ runs over all words of $\{ 0, 1, \cdots, d-1\}$ of
length $n$.

Two circle endomorphisms $f$ and $g$ are topologically conjugate
if there is an orientation-preserving circle homeomorphism $h$ of
$T$ such that
$$
f\circ h=h\circ g.
$$
The following result is first proved by Shub in~\cite{Shub} for
$C^{2}$ expanding circle endomorphisms by using the contracting
mapping theorem.

\vspace*{10pt}
\begin{theorem}~\label{topologicalconjugacy}
Let $f$ and $g$ be two circle endomorphisms such that both
$\iota_{n,f}$ and $\iota_{n,g}$ tend to zero as $n\to \infty$.
Then $f$ and $g$ are topologically conjugate if and only if their
topological degrees are the same.
\end{theorem}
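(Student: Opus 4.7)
The necessity is immediate since the topological degree is a conjugacy invariant, so I would concentrate on showing that, given $f$ and $g$ of the same degree $d$ with $\iota_{n,f},\iota_{n,g}\to 0$, one can construct a conjugating homeomorphism $h$ through the symbolic coding. The idea is to exhibit both $f$ and $g$ as quotients of the shift $(\Sigma,\sigma)$ by one and the same combinatorial equivalence relation, so that $h$ is defined tautologically at the symbolic level and then descends to $T$.

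First I would verify that for each $w=i_0i_1\cdots\in\Sigma$ the nested sequence $J^f_{w_1}\supset J^f_{w_2}\supset\cdots$ has diameters at most $\iota_{n,f}\to 0$, so that $\bigcap_n J^f_{w_n}=\{\pi_f(w)\}$ is a single point; uniform continuity of the map $\pi_f:\Sigma\to T$ follows from the same estimate, and surjectivity is immediate because $T$ is the union of the $J^f_{w_n}$ for each $n$. The analogous statements hold for $g$, yielding a continuous surjection $\pi_g:\Sigma\to T$ with $\pi_g\circ\sigma=g\circ\pi_g$. The crucial observation is then that the equivalence relation $w\sim w'\Longleftrightarrow\pi_f(w)=\pi_f(w')$ is purely combinatorial: $\pi_f(w)=\pi_f(w')$ exactly when $\{w,w'\}$ is a pair of the form
\[
\{\,i_0\cdots i_{n-1}\,j\,0\,0\,0\cdots,\ i_0\cdots i_{n-1}\,(j-1)\,(d-1)(d-1)(d-1)\cdots\,\}
\]
with $1\leq j\leq d-1$ (or $w=w'$), because these sequences describe the two codings of a common endpoint of adjacent Markov intervals $J^f_{w_n}$ produced by the preimages of $1$ under $f^n$. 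This combinatorial description uses only the degree $d$ and the Markov property, not the particular map, and therefore the equivalence relation $\sim$ coincides with the one induced by $\pi_g$.

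Because the two equivalence relations on $\Sigma$ agree, the formula $h(\pi_g(w)):=\pi_f(w)$ defines a map $h:T\to T$ unambiguously. It is a bijection: if $\pi_f(w)=\pi_f(w')$ then $w\sim w'$ and hence $\pi_g(w)=\pi_g(w')$, and vice versa. Continuity of $h$ follows from the universal property of the quotient: $\pi_g$ is a continuous surjection from the compact space $\Sigma$ onto $T$ and $\pi_f$ is constant on $\pi_g$-fibres, so $h=\pi_f\circ\pi_g^{-1}$ is continuous; the symmetric argument gives continuity of $h^{-1}$, so $h$ is a homeomorphism of the compact Hausdorff space $T$. Orientation is preserved because both $\pi_f$ and $\pi_g$ send the natural cyclic order on $\Sigma$ (induced by the lexicographic order on cylinders) to the cyclic order on $T$. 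Finally the conjugacy identity follows from the two semi-conjugacies: for every $w\in\Sigma$,
\[
f\circ h\circ\pi_g(w)=f\circ\pi_f(w)=\pi_f\circ\sigma(w)=h\circ\pi_g\circ\sigma(w)=h\circ g\circ\pi_g(w),
\]
and since $\pi_g$ is surjective we obtain $f\circ h=h\circ g$.

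The technical heart of the argument is the middle step, namely the identification of the two fibre relations. The hard part is to check that the only identifications produced by $\pi_f$ are those coming from adjacent Markov intervals (so that the ambiguity set reduces to the countable set $B$ described earlier) and that this combinatorial identification depends solely on $d$; everything else is a routine consequence of the shrinking of Markov cells and the universal property of a quotient by a closed equivalence relation on a compact metric space. Shub's $C^{2}$-expanding hypothesis is replaced here by the single metric hypothesis $\iota_{n}\to 0$, which is precisely what is needed to make the symbolic coding injective off the countable Markov-endpoint set.
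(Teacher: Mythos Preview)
Your proof is correct and follows essentially the same route as the paper: both define the conjugacy through the common symbolic coding $\pi_f,\pi_g:\Sigma\to T$ and set $h(\pi_g(w))=\pi_f(w)$ (the paper writes this simply as $h(x_w)=y_w$). Your version is more carefully argued than the paper's brief sketch, in particular making explicit that the fibre relation of $\pi_f$ coincides with that of $\pi_g$ because both reduce to the same combinatorial identification of adjacent-endpoint codings depending only on $d$, and invoking compactness/quotient arguments for continuity---details the paper leaves to the reader with ``one can check that $h$ is an orientation-preserving homeomorphism.''
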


\begin{proof}
The topological conjugacy preserves the topological degree. Thus if
$f$ and $g$ are topologically conjugate, then their topological
degrees are the same.

Now suppose $f$ and $g$ have the same topological degree. Then
they have the same symbolic space. Since both sets
$J_{w,f}=\{x_{w}\}$ and $J_{w,g}=\{y_{w}\}$ contain only a single
point for each $w$, we can define
$$
h (x_{w}) =y_{w}.
$$
One can check that $h$ is an orientation-preserving homeomorphism
with the inverse
$$
h^{-1}(y_{w})=x_{w}.
$$
\end{proof}

Therefore, for a fixed degree $d\geq 2$, there is only one topological
model $(\Sigma, \sigma)$  for dynamics of all circle endomorphisms
of degree $d$ with $\iota_{n}\to 0$ as $n\to \infty$.

\vspace*{10pt}
\begin{definition}~\label{boundedgeometrydef}
The sequence $\{\varpi_{n}\}_{n=0}^{\infty}$ of nested partitions
of $T$ is said to have bounded nearby geometry if there is a
constant $C>0$ such that for any $n\geq 0$ and any two intervals
$I, I'\in \eta_{n}$ with a same endpoint or one has an endpoint
$0$ and the other has an endpoint $1$ (in which case we say they
have a common endpoint by modulo $1$),
$$
C^{-1}\leq \frac{|I'|}{|I|} \leq C.
$$
The sequence $\{\varpi_{n}\}_{n=0}^{\infty}$ of nested partitions
of $T$ is said to have bounded geometry if there is a constant
$C>0$ such that
$$
\frac{|L|}{|I|} \geq C, \quad \forall \; L\subset I,\;\; L \in
\eta_{n+1}, \; I\in \eta_{n}, \quad \forall \; n\geq 0.
$$
\end{definition}

The bounded nearby geometry implies the bounded geometry since
each interval $I\in \eta_{n}$ is divided into $d$ subintervals in
$\eta_{n+1}$. But it is not true for the other direction.

\vspace*{10pt}
\begin{theorem}~\label{boundedgeometrythm}
Suppose $f$ is a uniformly symmetric circle endomorphism. Then the
sequence $\{\varpi_{n}\}_{n=0}^{\infty}$ of nested partitions of
$T$ has bounded nearby geometry and thus bounded geometry.
\end{theorem}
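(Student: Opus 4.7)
The plan is to reduce bounded nearby geometry to a single application of the uniformly symmetric inequality by pushing adjacent $n$-th level intervals forward to unit-length images under $F^{n}$. First, given two intervals $I,I'\in\eta_n$ sharing an interior endpoint $y$, write $I=[y-a,y]$ and $I'=[y,y+b]$. The Markov structure of $\eta_n$ says that $F^{n}$ sends each $I_{w_n}\in\eta_n$ homeomorphically onto some $[m,m+1]$, so orientation preservation combined with the shared endpoint $y$ forces $F^{n}(I)=[M,M+1]$ and $F^{n}(I')=[M+1,M+2]$ for some integer $M$.

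Second, I would handle the case of adjacency at the fixed point $1\equiv 0 \pmod{1}$, where $I=I_{0^n}=[0,a]$ and $I'=I_{(d-1)^n}=[1-b,1]$. Using $F^{n}(x+1)=F^{n}(x)+d^{n}$, I translate $I'$ by $-1$ to $[-b,0]$, which is adjacent to $I$ on the lift; and $F^{n}$ sends $[-b,0]$ onto $[-1,0]$ since $F^{n}(-b)=F^{n}(1-b)-d^{n}=(d^{n}-1)-d^{n}=-1$. So the boundary case reduces to the same picture: two consecutive unit-length images straddling a common image point $x$.

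Third, with $F^{-n}(x)=y$, $F^{-n}(x-1)=y-a$, and $F^{-n}(x+1)=y+b$, applying Definition \ref{uniformlysymmetry} with displacement parameter $1$ yields
$$
\frac{1}{1+\varepsilon(1)}\ \leq\ \frac{|F^{-n}(x+1)-F^{-n}(x)|}{|F^{-n}(x)-F^{-n}(x-1)|}\ =\ \frac{b}{a}\ \leq\ 1+\varepsilon(1),
$$
which is bounded nearby geometry with $C=1+\varepsilon(1)$ independent of $n$ and of the chosen pair. Bounded geometry then follows from the remark just preceding the statement: $\eta_{n+1}$ chops each $I\in\eta_n$ into exactly $d$ adjacent subintervals forming a chain, and applying bounded nearby geometry at most $d-1$ times bounds each subinterval length from below by a fixed fraction of $|I|$.

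The essentially only subtlety is the boundary case, where one must carefully exploit the periodicity $F(x+1)=F(x)+d$ to interpret ``adjacent modulo $1$'' on the lift. Once that is done, bounded nearby geometry is just the uniformly symmetric condition specialized to displacement $t=1$, after the images have been normalized to unit intervals by $F^{n}$.
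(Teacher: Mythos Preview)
Your proof is correct and follows essentially the same approach as the paper: both arguments observe that $F^{n}$ maps adjacent intervals of $\eta_{n}$ to consecutive unit intervals $[m,m+1]$ and $[m+1,m+2]$, then apply the uniformly symmetric inequality with $t=1$ to obtain $C=1+\varepsilon(1)$. Your treatment of the boundary case via the periodicity $F(x+1)=F(x)+d$ is more explicit than the paper's, which simply writes ``modulo $1$'' without further comment.
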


\vspace*{10pt}
\begin{proof}
Let $F$ with $F(0)=0$ be the lift of $f$. Define
$$
G_{k}(x) = F^{-1} (x+k): [0,1]\to [0,1], \quad \hbox{for}\quad
k=0, 1, \cdots, n-1.
$$
For any word $w_{n}=i_{0}i_{1}\cdots i_{n-1}$, define
$$
G_{w_{n}} = G_{i_{0}}\circ G_{i_{1}}\circ \cdots \circ
G_{i_{n-1}}.
$$
Then
$$
I_{w_{n}} =G_{w_{n}} ([0,1])= F^{-n} ([m, m+1]),
$$
where $m=i_{n-1}+i_{n-2}d+\cdots + i_{0}d^{n-1}$. Suppose
$I_{w_{n}}'$ is an interval in $\eta_{n}$ having a common endpoint
with $I_{w_{n}}$ modulo $1$. Then
$$
I_{w_{n}}' = F^{-n} ([m+1, m+2]) \quad \hbox{or}\quad F^{-n}
([m-1, m]).
$$
Thus
$$
\frac{1}{1+\varepsilon(1)}\leq  \frac{|I_{w_{n}}|}{|I_{w_{n}}'|}
\leq 1+\varepsilon (1).
$$
Let $C =1+\varepsilon(1)$. Then we have that
$$
C^{-1}\leq \frac{|I|}{|I'|} \leq C
$$
for any intervals $I, I' \in \eta_{n}$ with a common endpoint
modulo $1$, $n=0, 1, \cdots$. This means that
$\{\varpi_{n}\}_{n=0}^{\infty}$ has the bounded nearby geometry.
We proved the theorem.
\end{proof}

\vspace*{10pt}
\begin{corollary}~\label{quasisymmetriccojugacy}
Any two uniformly symmetric circle endomorphisms $f$ and $g$ of
the same degree $d\geq 2$ are topologically conjugate and the
conjugacy is a quasisymmetric homeomorphism.
\end{corollary}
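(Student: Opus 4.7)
The plan is to combine Theorem~\ref{boundedgeometrythm} applied to both $f$ and $g$ with Theorem~\ref{topologicalconjugacy} to produce the conjugacy, and then to exploit bounded nearby geometry on both sides to control the conjugacy $H$. First I would note that bounded geometry means each $I\in\eta_n$ has $d$ children in $\eta_{n+1}$ of length at least $c|I|$, so each child has length at most $(1-(d-1)c)|I|$. Iterating gives $\iota_{n,f}\leq (1-(d-1)c)^n\to 0$, and similarly $\iota_{n,g}\to 0$. Since the two endomorphisms have the same degree $d$, Theorem~\ref{topologicalconjugacy} produces an orientation-preserving topological conjugacy $h:T\to T$ with $h(J_{w_n,f})=J_{w_n,g}$ for every finite word $w_n$. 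At the level of lifts normalized by $H(0)=0$, this becomes $H(I_{w_n,f})=I_{w_n,g}$ for every $n\geq 0$, so $H$ sends consecutive intervals of $\eta_{n,f}$ to consecutive intervals of $\eta_{n,g}$ in an order-preserving way.

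Next I would establish quasisymmetry of $H$. Fix $x\in\mathbb{R}$ and $t>0$ small (the case of $t$ bounded away from $0$ is trivial by continuity of $H$ and compactness). Let $n=n(x,t)$ be the largest integer such that the interval $I_n\in\eta_{n,f}$ containing $x$ has $|I_n|\geq t$; by bounded geometry $|I_n|$ is comparable to $t$ with a constant depending only on $f$. Then both $[x-t,x]$ and $[x,x+t]$ are covered by a uniformly bounded number $N$ of consecutive intervals in $\eta_{n+1,f}$, each being a child of $I_n$ or of an immediate neighbor of $I_n$, where $N$ depends only on the bounded geometry constant of $f$. By bounded nearby geometry of $f$, all these intervals have mutually comparable lengths. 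Applying $H$ maps each to an interval of $\eta_{n+1,g}$ preserving the order, and bounded nearby geometry of $g$ (again via Theorem~\ref{boundedgeometrythm}) shows the image intervals are also mutually comparable. Summing on each side of $H(x)$ yields a constant $K\geq 1$ such that
\[
K^{-1}\leq \frac{|H(x+t)-H(x)|}{|H(x)-H(x-t)|}\leq K,
\]
which is exactly quasisymmetry of $H$.

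The main obstacle is the geometric bookkeeping in the middle step: one must show that at level $n+1$ only a uniformly bounded number of consecutive intervals of $\eta_{n+1,f}$ are needed to cover $[x-t,x+t]$, and that the chain of nearest-neighbor comparisons needed to equate their lengths has uniformly bounded length. This is where bounded geometry (controlling the shrinking factor from $\eta_n$ to $\eta_{n+1}$) and bounded nearby geometry (letting us hop between adjacent intervals at any fixed level) must be used in tandem. Once this combinatorial setup is in place, the same argument runs in parallel on the $f$-side and the $g$-side, and the quasisymmetry constant $K$ emerges as a product of the bounded nearby geometry constants for $f$ and $g$ together with the combinatorial count $N$.
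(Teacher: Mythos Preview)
Your overall strategy is the same as the paper's: use bounded geometry on both sides to get $\iota_{n,f},\iota_{n,g}\to 0$, invoke Theorem~\ref{topologicalconjugacy} to produce the conjugacy $H$ sending $\eta_{n,f}$ to $\eta_{n,g}$, and then read off quasisymmetry from bounded nearby geometry of the two partition sequences. The difference is in the bookkeeping of the quasisymmetry step, and that is where your argument has a genuine gap.

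Covering $[x-t,x+t]$ by a bounded number of consecutive intervals of $\eta_{n+1,f}$, all of mutually comparable length, gives you only \emph{upper} bounds on each of $|H(x+t)-H(x)|$ and $|H(x)-H(x-t)|$. To bound the ratio you also need \emph{lower} bounds, and for that you must exhibit a full partition interval contained in $[x,x+t]$ (respectively $[x-t,x]$). At level $n+1$ this can fail: if $x$ lies in the interior of $I_{n+1}=[a,b]$, then although $a\in(x-t,x]$ and $b\in[x,x+t)$, the adjacent interval $[b,b']$ may have length up to $C|I_{n+1}|$, which need not be $\le x+t-b$. So no $\eta_{n+1,f}$-interval need lie inside $[x,x+t]$, and ``summing on each side of $H(x)$'' does not by itself control the ratio. (A minor related point: the covering intervals need not all be children of $I_n$ or of its \emph{immediate} neighbours; a bounded number of further neighbours may be needed.)

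The paper closes this gap by first restricting to $x\in\Omega$, the set of partition endpoints; then $x$ is an endpoint of some interval at every level, so one can always find $J\in\eta_{n+1,f}$ with $x$ as an endpoint and $J\subset[x-t,x]$, and likewise on the right after passing to a boundedly deeper (or shallower) level via a short case analysis. Density of $\Omega$ and continuity of $H$ then give quasisymmetry at every $x$. You could alternatively patch your approach by descending a bounded number of levels (until nearby partition intervals have length $<t/3$, say) to locate the needed contained intervals, but this step must be made explicit.
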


\begin{proof}
From $f\circ h=h\circ g$ and $g(1)=1$, $h(1)$ is a fixed point of
$f$, that is, $f(h(1)) =h(1)$. Let $k(z) =z/h(1)$ and $\tilde{f} =
k\circ f\circ k^{-1}$. Then $\tilde{f}(1) =1$. Take $\tilde{h} =
k\circ h$. We have that $\tilde{h}(1)=1$ and $\tilde{f}\circ
\tilde{h}= \tilde{h}\circ g$. So $\tilde{h}$ is quasisymmetric if
and only if $h$ is quasisymmetric. So, without loss of generality,
we assume that $h(1)=1$.

Suppose
$$
\eta_{n,f} =\{I_{w_{n},f} \} \quad \hbox{and}\quad \eta_{n,g}=\{
I_{w_{n}, g}¡¢\}, \quad n=1, 2,\cdots
$$
are two sequences of Markov partitions for $f$ and $g$,
respectively.

From the bounded geometry property
(Theorem~\ref{boundedgeometrythm}), we have a constant $0<\tau<1$
such that
$$
\iota_{n,f} =\max_{w_{n}} |I_{w_{n},f}|, \;\; \iota_{n,g}
=\max_{w_{n}} |I_{w_{n},g}|\leq \tau^{n}, \quad \forall \; n=1,
2,\cdots.
$$
Then Theorem~\ref{topologicalconjugacy} implies that $f$ and $g$
are topologically conjugate.

Suppose $h$ is the topological conjugacy between $f$ and $g$ and
$H$ is its lift to ${\mathbb R}$. By adding all integers, the
sequence of partitions $\eta_{n,f}$ and $\eta_{n,g}$ induce two
sequences of partitions of ${\mathbb R}$, which we still denoted
as $\eta_{n,f}$ and $\eta_{n,g}$. Both of these sequences of
partitions have bounded nearby geometry.

Let $\Omega$ be the set of all endpoints of intervals $I\in
\eta_{n}$, $n=0,1\cdots,\infty$. Then it is dense in ${\mathbb
R}$.

For $x\in \Omega$. Consider the interval $[x-t, x]$. There is a
largest integer $n\geq 0$ such that there is an interval
$I=[a,x]\in \eta_{n,f}$ satisfying $[x-t,x]\subseteq I$. Suppose
$J=[b,x]\in \eta_{n+1,f}$. Then $J\subseteq [x-t,x]$. Let
$J'=[x,c]\in \eta_{n+1,f}$. From Theorem~\ref{boundedgeometrythm},
there is a constant $C>0$ such that
$$
C^{-1}\leq \frac{|J'|}{|J|}\leq C.
$$

If $|J'|> t$, we have $|J'|\leq Ct$. Let $J_{k}'=[x,c_{k}]\in
\eta_{n+k+1,f}$ for $k>0$. From the bounded geometry, there is a
$0<\tau<1$ such that
$$
|J_{k}'| \leq \tau^{k} C t.
$$
Let $k$ be the smallest integer
greater than $-\log C/\log \tau$. Then $|J_{k}'|\leq t$. This implies
that $J_{k}'\subseteq [x,x+t]$. So we have
$$
\frac{|H(J_{k}')|}{|H(I)|} \leq
\frac{|H(x+t)-H(x)|}{|H(x)-H(x-t)|} \leq \frac{|H(J')|}{|H(J)|},
$$
where $H(I)\in \eta_{n,g}$, $H(J), H(J')\in \eta_{n+1,g}$, and
$H(J_{k}')\in \eta_{n+k+1, g}$. Now from the bounded geometry for
$g$, we have a constant, still denote as $C>0$, such that
$$
C^{-1} \leq \frac{|H(J_{k}')|}{|H(I)|} \leq
\frac{|H(x+t)-H(x)|}{|H(x)-H(x-t)|} \leq \frac{|H(J')|}{|H(J)|}
\leq C.
$$

If $|J'|\leq t$, we have $|J'|\geq C^{-1} t$. Let
$J_{-k}'=[x,c_{-k}]\in \eta_{n-k+1,f}$ for $k\geq 0$. Then from
the bounded geometry, there is a constant, which we still denote
as $0<\tau<1$, such that $|J_{-k}'| \geq \tau^{-k} C^{-1} t$. Let
$k$ be the smallest integer greater than $-\log C/\log \tau$. Then
$|J_{-k}'|\geq t$. This implies that $J_{-k}'\supseteq [x,x+t]$.
So we have
$$
\frac{|H(J')|}{|H(I)|} \leq \frac{|H(x+t)-H(x)|}{|H(x)-H(x-t)|}
\leq \frac{|H(J_{-k}')|}{|H(J)|},
$$
where $H(I)\in \eta_{n,g}$, $H(J), H(J')\in \eta_{n+1,g}$, and
$H(J_{-k}')\in \eta_{n-k+1, g}$. Now from the bounded geometry for
$g$, we have a constant, which we still denote as $C>0$, such that
$$
C^{-1} \leq \frac{|H(J')|}{|H(I)|} \leq
\frac{|H(x+t)-H(x)|}{|H(x)-H(x-t)|} \leq
\frac{|H(J_{-k}')|}{|H(J)|} \leq C.
$$

For any $x\in {\mathbb R}$, since $\Omega$ is dense in $[0,1]$, we
have a sequence $x_{n}\in \Omega$ such that $x_{n}\to x$ as $n\to
\infty$. For any $t>0$, we have that
$$
C^{-1} \leq \frac{|H(x_{n}+t)-H(x_{n})|}{|H(x_{n})-H(x_{n}-t)|}
\leq C.
$$
Since $H$ is uniformly continuous on ${\mathbb R}$, we get that
$$
C^{-1} \leq \frac{|H(x+t)-H(x)|}{|H(x)-H(x-t)|} \leq C.
$$
We proved the theorem.
\end{proof}

\begin{remark} The bounded nearby geometry and the quasisymmetric
property for a conjugacy have been also studied for
one-dimensional maps with critical points
in~\cite{Jiang1,Jiang2,Jiang6}.
\end{remark}

\section{Dual symbolic space and geometric representation}

Suppose $f$ is a circle endomorphism. Suppose
$\{\eta_{n}\}_{n=0}^{\infty}$ is the sequence of partitions of
$[0,1]$. As we have seen in the previous section, for each
interval in $\eta_{n}$, there is a labeling
$w_{n}=i_{0}i_{1}\cdots i_{n-1}$. One can think of this kind of
labelings as the left topology: read ordered digits from the left
to the right. Now we read from the same ordered digits from the
right to the left, that is,
$$
w_{n}^{*} =j_{n-1}\cdots j_{1}j_{0}
$$
where $j_{n-1}=i_{0}$, $\cdots$, $j_{1}=i_{n-2}$, and
$j_{0}=i_{n-1}$. Thus we consider the dual symbolic space
$$
\Sigma^{*} =\{w^{*} =\cdots j_{n-1}\cdots j_{k}\cdots j_{1}j_{0}
\; |\; j_{k}\in \{ 0, 1, \cdots, d-1\}, \; k=0, 1, \cdots \}
$$
equipped with the right topology which is generated by all right
cylinders
$$
[w_{n}^{*}]=\{ w^{*} =\cdots j_{n}'j_{n-1}\cdots j_{1}j_{0}\; |\;
j_{n+k}' \in \{ 0, 1, \cdots, d-1\}, k=0, 1, \cdots\},
$$
where $w_{n}^{*}= j_{n-1}\cdots j_{1}j_{0}$ is a fixed word of $\{
0, 1, \cdots, d-1\}$ of length $n$.

Consider the right shift map
$$
\sigma^{*} : w^{*}= \cdots j_{n-1}\cdots j_{1}j_{0} \to \sigma^{*}
(w^{*}) =\cdots j_{n-1}\cdots j_{1}.
$$
Then we call $(\Sigma^{*}, \sigma^{*})$ the dual symbolic
dynamical system for $f$.

The dual derivative of $f$ is defined on the dual symbolic space
$\Sigma^{*}$ as follows.

For any $w^{*}=\cdots j_{n-1}\cdots j_{1}j_{0}\in \Sigma^{*}$, let
$$
w_{n}^{*} =j_{n-1}\cdots j_{1}j_{0}\quad \hbox{and}\quad
v_{n-1}^{*}= \sigma^{*} (w_{n}^{*}) = j_{n-1}\cdots j_{1}.
$$
Then
$$
I_{w_{n}} \subset I_{v_{n-1}}.
$$
Define
\begin{equation}~\label{prederivative}
D^{*}(f) (w_{n}^{*}) =\frac{|I_{v_{n-1}}|}{|I_{w_{n}}|}.
\end{equation}

\begin{definition}
If for every $w^{*} \in \Sigma^{*}$,
$$
D^{*}(f) (w^{*}) = \lim_{n\to \infty} D^{*}(f) (w_{n}^{*})
$$
exists, then we define a function
\begin{equation}~\label{derivative}
D^{*} (f)(w^{*}): \Sigma^{*}\to {\mathbb R}^{+}.
\end{equation}
We call this function the dual derivative of $f$.
\end{definition}

\begin{remark}
We used to call one divided by a dual derivative a scaling
function. The notion of the scaling function is first introduced
into the study of geometric Cantor sets on the line by Sullivan
in~\cite{Sullivandiff} where a scaling function is used to define
differentiable structures of geometric Cantor sets on the line. A
general version of scaling functions for any Markov maps is
defined in~\cite{Jiang1} (see also~\cite{Jiang2}). This general
notion of scaling function has been used extensively
in~\cite{Jiang3,Jiang4,Jiang5} as a complete smooth invariant in
the smooth classification of one-dimensional maps with critical
points. Since a circle endomorphism of degree $d\geq 2$ can be thought
as a Markov map, we use the the definition in~\cite{Jiang1} (see
also~\cite{Jiang2}). However, the notion of the dual derivative is
a more appropriate term in this paper for the study of dual
geometric Gibbs measure theory.
\end{remark}

A function $\phi^{*} (w^{*})$ on $\Sigma^{*}$ is called H\"older
continuous if there are constants $C>0$ and $0<\tau< 1$ such that
$$
|\phi^{*}(w^{*})-\phi^{*}(\tilde{w}^{*})| \leq C \tau^{n}
$$
as long as the first $n$ digits of $w^{*}$ and $\tilde{w}^{*}$
from the right are the same. If we consider a metric
$$
d(w^{*}, \tilde{w}^{*}) =\sum_{k=0}^{\infty}
\frac{|j_{k}-j_{k}'|}{d^{k}}
$$
on $\Sigma^{*}$, then $\phi^{*}(w^{*})$ being H\"older continuous
is equivalent to the condition that there are two constants $C>0$
and $0<\beta \leq 1$ such that
$$
|\phi^{*}(w^{*})-\phi^{*}(\tilde{w}^{*})| \leq C\big(
d(w^{*},\tilde{w}^{*})\big)^{\beta}, \quad w, \tilde{w}^{*}\in
\Sigma^{*}.
$$

\vspace*{10pt}
\begin{theorem}~\label{dualderivative}
Suppose $f$ is a uniformly symmetric circle
endomorphism. Then its dual derivative
$$
D^{*}(f)(w^{*}): \Sigma^{*}\to {\mathbb R}^{+}
$$
exists and is a continuous function. Furthermore, if $f$ is
$C^{1+\alpha}$, then $D^{*}(f)(w^{*})$ is H\"older continuous.
Actually when $f$ is $C^{1}$ Dini expanding, the modulus of
continuity of $D^{*}(f)(w^{*})$ is controlled by
$\tilde{\omega}(t)$.
\end{theorem}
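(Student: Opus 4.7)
The plan is to fix $w^* = \cdots j_1 j_0 \in \Sigma^*$, set $D_m = D^*(f)(w_m^*) = |I_{v_{m-1}}|/|I_{w_m}|$, and show that $\{D_m\}$ is Cauchy with a modulus of convergence depending only on the uniform-symmetry modulus $\varepsilon$ of $f$. The key structural observation is that, for any $p > m$, the single inverse branch $H_{p,m} = G_{j_{p-1}} \circ \cdots \circ G_{j_m}$ of $F^{-(p-m)}$ simultaneously carries $I_{v_{m-1}}$ onto $I_{v_{p-1}}$ and $I_{w_m}$ onto $I_{w_p}$. Hence
$$\frac{D_p}{D_m} = \frac{|H_{p,m}(I_{v_{m-1}})|\,|I_{w_m}|}{|H_{p,m}(I_{w_m})|\,|I_{v_{m-1}}|}$$
measures exactly the distortion by which $H_{p,m}$ rescales the inclusion $I_{w_m} \subset I_{v_{m-1}}$, and the core task is to bound this distortion uniformly in $n = p-m$.

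The central estimate I would establish is a bounded distortion of the form: for any branch of $F^{-n}$ acting on an interval $A \subset [0,1]$ of length $t$, and any subinterval $B \subset A$ with $|A|/|B|$ bounded, one has
$$\Bigl|\log\frac{|F^{-n}(A)|/|F^{-n}(B)|}{|A|/|B|}\Bigr| \leq \delta(t), \qquad \delta(t) \to 0 \text{ as } t \to 0^+,$$
with $\delta$ independent of $n$. To extract $\delta$ from the symmetric-quotient hypothesis alone, I would split $I_{v_{m-1}}$ into its $d$ children in $\eta_m$ (of lengths comparable to $|I_{w_m}|$ by Theorem~\ref{boundedgeometrythm}), subdivide them into equal-length pieces where necessary, and apply the defining inequality of uniform symmetry to adjacent pairs at scale $\leq |I_{v_{m-1}}|$. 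Each pullback ratio then differs from the original by a factor $1 + O(\varepsilon(|I_{v_{m-1}}|))$, and telescoping across the finitely many children gives $\delta(t) = O(\varepsilon(t))$. Together with the geometric decay $|I_{v_{m-1}}| \leq C\tau^{m-1}$ also supplied by that theorem, this produces $D_p/D_m = 1 + O(\varepsilon(\tau^{m-1}))$, so $\{D_m\}$ is Cauchy and converges to $D^*(f)(w^*)$. Continuity on $\Sigma^*$ follows at once: if $w^*$ and $\tilde w^*$ share their rightmost $n$ digits, then $w_n^* = \tilde w_n^*$, and each of $D^*(f)(w^*), D^*(f)(\tilde w^*)$ is within $O(\delta(\tau^{n-1}))$ of this common value, yielding a modulus of continuity of order $\delta(\tau^{n-1})$.

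For the sharper regularity statements, the explicit distortion inequalities already proved in this section supply the right $\delta$: equation~(\ref{smoothdistortion}) gives $\delta(t) \leq C t^\alpha$ in the $C^{1+\alpha}$ expanding case, which, after translating through the right-topology metric $d(w^*, \tilde w^*) \asymp d^{-n}$, yields H\"older continuity of $D^*(f)$ on $\Sigma^*$; equation~(\ref{dinidistortion}) gives $\delta(t) \leq C\tilde\omega(t)$ in the $C^1$ Dini case, yielding the claimed modulus $\tilde\omega$. The main obstacle is the purely uniformly-symmetric case, where no derivative of $f$ is available and the distortion bound must be squeezed out of the symmetric quotient alone; the critical technical device is the bounded nearby geometry of Theorem~\ref{boundedgeometrythm}, which confines the combinatorics of the pullback partitions to a fixed finite pattern and thereby keeps the telescoping argument independent of the iterate $n$.
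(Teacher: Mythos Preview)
Your proposal is correct and follows essentially the same route as the paper: identify the single inverse branch carrying $I_{w_m}\subset I_{v_{m-1}}$ onto $I_{w_p}\subset I_{v_{p-1}}$, bound its distortion on the small interval $I_{v_{m-1}}$ using uniform symmetry, and use the exponential decay of $|I_{v_{m-1}}|$ to conclude that the pre-derivatives form a Cauchy sequence; the continuity and the $C^{1+\alpha}$ / Dini refinements via (\ref{smoothdistortion}) and (\ref{dinidistortion}) are handled exactly as you describe.

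The one organizational difference worth noting is your ``central estimate.'' The paper does not derive this distortion bound by telescoping over the $d$ Markov children; instead it isolates a clean general lemma (Lemma~\ref{qsdistortion}): any $M$-quasisymmetric self-map $Q$ of $[0,1]$ fixing the endpoints satisfies $|Q(x)-x|\le\zeta(M)$ with $\zeta(M)\to 0$ as $M\to 1$. One then linearly rescales $I_{v_{m-1}}$ to $[0,1]$, observes that the normalized inverse branch is $(1+\varepsilon(\tau^{m-1}))$-quasisymmetric, and reads off the bound $|D^*(f)(w_p^*)-D^*(f)(w_m^*)|\le \zeta(1+\varepsilon(\tau^{m-1}))$ directly. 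Your ``split into children, subdivide into equal-length pieces, apply the symmetric quotient to adjacent pairs, telescope'' is exactly the kind of argument that Lemma~\ref{qsdistortion} formalizes (via dyadic subdivision rather than $d$-adic), so you are effectively re-proving that lemma inline. This is fine, but the step you flag as routine---controlling the passage from adjacent equal-length pieces to the actual child $I_{w_m}$, which need not align with an equal-length grid---is precisely where the work sits; the paper's choice to package it once as Lemma~\ref{qsdistortion} and then invoke it is cleaner and avoids having to track how many pieces are needed.
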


We first prove the following lemma. Suppose $Q: [0,1] \to [0,1]$
is a homeomorphism such that $Q(0)=0$ and $Q(1)=1$. Let $M\geq 1$.
We say that $Q$ is $M$-quasisymmetric on $[0,1]$ if
$$
M^{-1} \leq \frac{|Q(x+t)-Q(x)|}{|Q(x)-Q(x-t)|} \leq M, \quad
\forall\; x-t,\; x,\; x+t \in [0,1], t>0.
$$

\vspace*{10pt}
\begin{lemma}~\label{qsdistortion}
There is a function $\zeta (M)>0$ satisfying $\zeta (M) \to 0$ as
$M\to 1$ such that for any $M$-quasisymmetric homeomorphism $Q$ on
$[0,1]$ such that $Q(0)=0$ and $Q(1)=1$,
$$
|Q(x)-x|\leq \zeta (M), \quad \forall\; x \in [0,1].
$$
\end{lemma}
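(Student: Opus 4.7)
My approach is a compactness argument in three steps.

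First I would establish that for each fixed $M_{0} \geq 1$, the family $\mathcal{F}_{M_{0}}$ of all $M$-quasisymmetric self-homeomorphisms $Q$ of $[0,1]$ with $M \leq M_{0}$ and $Q(0) = 0$, $Q(1) = 1$ is equicontinuous. Applying the $M$-quasisymmetric inequality at midpoints and iterating yields $Q(1/2^{n}) \leq (M/(M+1))^{n} \leq (M_{0}/(M_{0}+1))^{n}$, and likewise $1 - Q(1 - 1/2^{n}) \leq (M_{0}/(M_{0}+1))^{n}$; the same iteration centered at any dyadic $k/2^{n} \in (0,1)$ propagates this bound into the interior, producing a common modulus of continuity for every $Q \in \mathcal{F}_{M_{0}}$.

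Next I would treat the rigid case $M = 1$. If $Q$ is $1$-quasisymmetric with $Q(0) = 0$ and $Q(1) = 1$, then $Q(x + t) - Q(x) = Q(x) - Q(x - t)$ whenever defined, so $Q$ preserves midpoints. By induction $Q(k/2^{n}) = k/2^{n}$ for every dyadic rational in $[0,1]$, and continuity gives $Q = \mathrm{id}$.

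Finally I would conclude by contradiction. If no such $\zeta(M)$ existed, there would be $\varepsilon_{0} > 0$, a sequence $M_{n} \searrow 1$, and $M_{n}$-quasisymmetric homeomorphisms $Q_{n} : [0,1] \to [0,1]$ with $Q_{n}(0) = 0$, $Q_{n}(1) = 1$ and $\sup_{x} |Q_{n}(x) - x| \geq \varepsilon_{0}$. With $M_{0} = \sup_{n} M_{n} < \infty$, the first step gives equicontinuity of $\{Q_{n}\}$; Arzel\`a--Ascoli then extracts a subsequence converging uniformly to some $Q_{\infty}$ that inherits $Q_{\infty}(0) = 0$, $Q_{\infty}(1) = 1$ and, since the quasisymmetric inequality passes to uniform limits and $M_{n} \to 1$, is $1$-quasisymmetric. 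The second step forces $Q_{\infty} = \mathrm{id}$, contradicting $\sup |Q_{n} - \mathrm{id}| \geq \varepsilon_{0}$. The lemma then follows by defining $\zeta(M) := \sup\{\sup_{x} |Q(x) - x| : Q \in \mathcal{F}_{M}\}$, which is bounded above by $1$ and, by the argument above, tends to $0$ as $M \to 1$. The main technical point is the equicontinuity in the first step; the rigidity for $M = 1$ and the Arzel\`a--Ascoli limit are routine once equicontinuity is in hand.
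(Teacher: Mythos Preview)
Your compactness argument is correct (the one place worth a sentence of care is that the uniform limit $Q_\infty$ is again a homeomorphism; the lower bound $Q_n(i/2^n)-Q_n((i-1)/2^n)\ge (1+M_0)^{-n}$ passes to the limit and forces strict monotonicity, after which the quasisymmetric inequality survives the limit since the denominator stays positive).

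However, your route is genuinely different from the paper's. The paper gives a direct, quantitative proof: from the $M$-quasisymmetry it derives the two-sided bound
\[
\Big(\frac{1}{1+M}\Big)^{n}\le Q\Big(\frac{i}{2^{n}}\Big)-Q\Big(\frac{i-1}{2^{n}}\Big)\le \Big(\frac{1}{1+M^{-1}}\Big)^{n},
\]
telescopes this into an explicit bound on $|Q(i/2^{n})-i/2^{n}|$ level by level, and sums the resulting geometric-type series to produce an \emph{explicit} formula for $\zeta(M)$. Your argument is soft: equicontinuity plus Arzel\`a--Ascoli plus the rigidity $\{M=1\}\Rightarrow Q=\mathrm{id}$. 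What the paper's approach buys is an effective $\zeta(M)$ with a concrete rate, which is handy if one ever needs quantitative control (and it avoids any subsequence extraction). What your approach buys is conceptual economy and portability: it would transfer verbatim to any normalized family for which $M=1$ forces the identity and the family is precompact, without redoing the dyadic bookkeeping. For the purposes of this paper, where only the qualitative statement $\zeta(M)\to 0$ is used downstream, both proofs are adequate.
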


\begin{proof}
Consider points $x_{n}=1/2^{n}$, $n=0, 1, \cdots$.
$M$-quasisymmetry  and the normalization  $Q(0)=0, Q(1)=1$ imply
that
$$
\frac{1}{1+M} H(\frac{1}{2^{n-1}}) \leq Q(\frac{1}{2^{n}}) \leq
\frac{1}{1+M^{-1}} Q(\frac{1}{2^{n-1}}).
$$
Similarly,
$$
\Big(\frac{1}{1+M}\Big)^{n} \leq Q(\frac{1}{2^{n}}) \leq
\Big(\frac{1}{1+M^{-1}}\Big)^{n}, \quad \forall\; n\geq 1.
$$
Furthermore, by $M$-quasisymmetry  and induction on $n=1,
2,\cdots$, yield
$$
\Big(\frac{1}{1+M}\Big)^{n} \leq Q(\frac{i}{2^{n}}) -
Q(\frac{i-1}{2^{n}}) \leq \Big( \frac{1}{1+M^{-1}}\Big)^{n}, \quad
\forall \; n\geq 1, \;\; 1\leq i\leq 2^{n}.
$$

Let
$$
\tau_n=
\max\left\{\left(\frac{M}{M+1}\right)^n-\frac{1}{2^n},\frac{1}{2^n}-\left(\frac{1}{M+1}\right)^n\right\},
\quad n=1,2, \cdots.
$$
Then for $n=1$,
$$
|Q(\frac{1}{2}) -\frac{1}{2}| \leq
\tau_{1}=\frac{1}{2}\frac{M-1}{M+1},
$$
and for any $n>1$, we have
$$
\max_{0\leq i\leq 2^{n}} \Big| Q(\frac{i}{2^{n}})
-\frac{i}{2^{n}}\Big| \leq \max_{0\leq i\leq 2^{n-1}} \Big|
Q(\frac{i}{2^{n-1}}) -\frac{i}{2^{n-1}}\Big| + \tau_{n}
$$
By summing over $k$ for $1 \leq k \leq n,$ we obtain
$$
\max_{0\leq i\leq 2^{n}} \Big| Q(\frac{i}{2^{n}})
-\frac{i}{2^{n}}\Big| \leq \delta_{n}=\sum_{k=1}^{n} \tau_{k}.
$$
If we put $\zeta (M) = \sup_{1\leq n<\infty}\{\delta_{n}\},$ by
summing geometric series, we obtain
$$
\zeta(M) = \max_{1\leq n<\infty} \Big\{
M-1+\frac{1}{2^{n}}-M\Big(\frac{M}{1+M}\Big)^{n},
1-\frac{1}{M}+\frac{1}{M}\Big(\frac{1}{M}\Big)^{n}
-\frac{1}{2^{n}}\Big\}.
$$
Clearly,  $\zeta(M)\to 0$ as $M\to 1$, and since the dyadic points
$$
\{ i/2^{n}\;\; |\;\; n=1, 2, \cdots ; 0\leq i\leq 2^{n}\}
$$
are dense in $[0,1]$, we conclude
$$
|Q(x)-x| \leq \zeta (M) \quad \forall \; x\in [0,1],
$$
which proves the lemma.
\end{proof}

\begin{proof}[Proof of Theorem~\ref{dualderivative}] Suppose
$w^*=\cdots j_{n-1}\cdots j_{1}j_{0}\in \Sigma^{*}$. Let
$$
w^{*}_{n} =j_{n-1}\cdots j_{1}j_{0}\quad \hbox{and}\quad
v_{n-1}^{*} = j_{n-1}\cdots j_{1}.
$$
By definition,
$$
D^{*} (f) (w_{n}^{*}) = \frac{|I_{v_{n-1}}|}{|I_{w_{n}}|},
$$
where $I_{w_{n}} \subset I_{v_{n-1}}$. Consider the sequence
$\{D^{*} (f) (w_{n}^{*})\}_{n=1}^{\infty}$.

Let $0<\tau<1$ be a constant such that
$$
\tilde{\iota}_{n} =\max_{w_{n}} |I_{w_{n}}| \leq \tau^{n}, \quad
\forall n\geq 1.
$$
For any $\epsilon>0$, let $n_{0}>0$ be an integer such that $\zeta
(1+\varepsilon(\tau^{n-1})) \leq \epsilon$ for all $n>n_{0}$. Then
for any $m>n> n_{0}$, we have that
$$
F^{m-n} (I_{v_{m-1}}) = I_{v_{n-1}}\quad \hbox{and}\quad F^{m-n}
(I_{w_{m}}) = I_{w_{n}}
$$
Since $F^{-(m-n)}| I_{v_{n-1}}$ is a $(1+\varepsilon
(\tau^{n-1}))$-quasisymmetric homeomorphism, from
Lemma~\ref{qsdistortion} (by normalizing $I_{v_{n-1}}$ to $[0,1]$
and $I_{w_{m}}$ to $[0,x]$ by a linear transformation),
$$
|D^{*} (f) (w_{m}^{*})- D^{*} (f) (w_{n}^{*})| =
\Big|\frac{|F^{-(m-n)}(I_{v_{n-1}})|}{|F^{-(m-n)}(I_{w_{n}})|} -
\frac{|I_{v_{n-1}}|}{|I_{w_{n}}|}\Big| \leq \zeta(1+\varepsilon
(\tau^{n-1})) \leq \epsilon.
$$
This implies that $\{D^{*} (f) (w_{n}^{*})\}_{n=1}^{\infty}$ is a
Cauchy sequence. Thus the limit
$$
D^{*} (f) (w^{*})= \lim_{n\to \infty} D^{*} (f) (w_{n}^{*})
$$
exists.

Now consider two points
$$
w^{*} = \cdots j_{m-1}\cdots j_{n}j_{n-1}\cdots j_{0}\quad
\hbox{and}\quad  \tilde{w}^{*} = \cdots j_{m-1}\cdots j_{n}'
j_{n-1}\cdots j_{0}.
$$
Let $w_{m}^{*}=j_{m-1}\cdots j_{n}j_{n-1}\cdots j_{0}$ and
$\tilde{w}_{m}^{*}=j_{m-1}\cdots j_{n}' j_{n-1}\cdots j_{0}$. Then
$w_{n}^{*}=\tilde{w}^{*}_{n}$. For any $m>n$,
$$
|D^{*} (f) (w_{m}^{*})- D^{*} (f) (\tilde{w}_{m}^{*})|
$$
$$
\leq |D^{*} (f) (w_{m}^{*})- D^{*} (f) (w_{n}^{*})| + |D^{*} (f)
(\tilde{w}_{m}^{*})- D^{*} (f) (w_{n}^{*})| \leq
2\zeta(1+\varepsilon (\tau^{n-1})).
$$
So by taking a limit,
$$
|D^{*} (f) (w^{*})- D^{*} (f) (\tilde{w}^{*})| \leq
2\zeta(1+\varepsilon (\tau^{n-1})).
$$
Thus we have that
$$
D^{*}(f)(w^{*}): \Sigma^{*}\to {\mathbb R}^{+}
$$
is a continuous function whose modulus of continuity is bounded by
$2\zeta(1+\varepsilon (\tau^{n-1}))$.

Moreover, if $f$ is a $C^{1+\alpha}$ expanding circle endomorphism
for some $0<\alpha\leq 1$, from the H\"older distortion
property~(\ref{smoothdistortion}), there is a constant $C>0$ such
that
$$
|D^{*} (f) (w^{*})- D^{*} (f) (\tilde{w}^{*})| \leq C\tau^{\alpha
(n-1)}.
$$
This implies that the dual derivative $D^{*}(f)(w^{*})$ is
H\"older continuous.

When $f$ is $C^{1}$ Dini, then there is a constant $C>0$ such that
$$
|D^{*} (f) (w^{*})- D^{*} (f) (\tilde{w}^{*})| \leq
C\tilde{\omega} (\tau^{n-1}).
$$
Thus the dual derivative $D^{*}(f)(w^{*})$ is continuous and its
modulus of continuity is controlled by $\tilde{\omega}
(\tau^{n-1})$. We proved the theorem.
\end{proof}

\section{Teichm\"uller spaces and dual derivatives}

For a fixed integer $d\geq 2$, let ${\mathcal C}^{1+}$ be the
space of all $C^{1+\alpha}$, $0<\alpha\leq 1$, expanding circle
endomorphisms of degree $d$. Take $q_{d} (z) =z^{d}$ as a
basepoint in ${\mathcal C}^{1+}$. A {\em marked $C^{1+}$ circle
endomorphism} by $q_{d}$ is a pair $(f, h_{f})$ where $f\in
{\mathcal C}^{1+}$ and $h_{f}$ is the orientation-preserving
homeomorphism of $T$ such that $h_{f}(1)=1$ and
$$
f\circ h_f= h_f\circ q_{d}.
$$

From Corollary~\ref{quasisymmetriccojugacy}, for any marked
$C^{1+}$ circle endomorphism $(f,h_{f})$ by $q_{d}$, $h_{f}$ is
quasisymmetric. Thus we can define Teichm\"uller equivalence
relation $\sim_{T}$, Teichm\"uller space, and Teichm\'uller type
metric as follows.

\begin{definition}
Two marked $C^{1+}$ circle endomorphisms are equivalent, denoted
as $(f,h_{f}) \sim_{T} (g,h_{g})$, if $h_{f}\circ h_{g}^{-1}$ is a
$C^{1}$-diffeomorphism.
\end{definition}

\begin{definition}
The Teichm\"uller space
$$
{\mathcal T}{\mathcal C}^{1+}=\{ [(f, h_f)] \;|\; f\in {\mathcal
C}^{1+}, \; \hbox{with the basepoint $[(q_{d}, id)]$} \}
$$
is defined as the space of all $\sim_{T}$-equivalence classes
$[(f,h_{f})]$ in the space of all marked $C^{1+}$ circle
endomorphisms by $q_{d}$.
\end{definition}

Now let us define the Teichm\"uller type metric $d_{\mathcal
T}(\cdot, \cdot)$ on ${\mathcal T}{\mathcal C}^{1+}$. We first
consider the universal Teichm\"uller space. We refer
to~\cite{Ahlfors,GardinerSullivanSymm,Lehto} as standard
references for this subject. Let $\mathcal{QS}$ be the set of all
quasisymmetric orientation-preserving homeomorphisms of the unit
circle $T$ factored by the space of all M\"obius transformations
of the circle. (Then $\mathcal{QS}$ may be identified with the set
of all quasisymmetric orientation-preserving homeomorphisms of the
unit circle fixing three points). Let ${\mathcal S}$ be the subset
of $\mathcal{QS}$ consisting of all symmetric
orientation-preserving homeomorphisms of the unit circle $T$. The
space ${\mathcal S}$ is a subgroup of $\mathcal{QS}$ closed in the
Teichm\"uller topology. For any $h\in \mathcal{QS}$, let
${\mathcal E}_{h}$ be the set of all quasiconformal extensions of
$h$ into the unit disk. For each $\tilde{h}\in {\mathcal E}_{h}$,
let
$$
\mu_{\tilde{h}} =\frac{\tilde{h}_{\overline{z}}}{\tilde{h}_{z}}
$$
be its complex dilatation. Let
$$
k_{\tilde{h}}=\|\mu(z)\|_{\infty}\quad \hbox{and}\quad
K_{\tilde{h}} =\frac{1+k_{\tilde{h}}}{1-k_{\tilde{h}}}.
$$
Here $K_{\tilde{h}}$ is called the quasiconformal dilatation of
$\tilde{h}$. Using quasiconformal dilatation, we can define a
distance in $\mathcal{QS}$ by
$$
d_{\mathcal T}(h_{1}, h_{2}) =\frac{\ 1\ }{\ 2\ }\inf \{ \log
K_{\tilde{h}_{1}\tilde{h}_{2}^{-1}}\;|\; \tilde{h}_{1}\in
{\mathcal E}_{h_{1}},\tilde{h}_{2} \in {\mathcal E}_{2}\}.
$$
Here $(\mathcal{QS}, d)$ is called the universal Teichm\"uller
space. It is a complete metric space and a complex manifold with
complex structure compatible with the Hilbert transform.

The topology coming from the metric $d_{\mathcal T}$ on
$\mathcal{QS}$ induces a topology on the factor space
$\mathcal{QS} {\rm \ mod \ }{\mathcal S}$. Given two cosets
${\mathcal S} f$ and ${\mathcal S} g$ in this factor space, define
a metric by
$$
\overline{d}_{T} ({\mathcal S}f, {\mathcal S}g) =\inf_{A, B\in
{\mathcal S}} d(Af, Bg).
$$
The quotient space $\mathcal{QS} {\rm \ mod \ }{\mathcal S}$ with
this metric is a complete metric space and a complex manifold. The
topology on $(\mathcal{QS} {\rm \ mod \ } {\mathcal S},
\overline{d}_{T})$ is the finest topology which makes the
projection $\pi: \mathcal{QS} \to \mathcal{QS} {\rm \ mod \ }
{\mathcal S}$ continuous, and $\pi$ is also holomorphic.

An equivalent topology on the quotient space $\mathcal{QS} {\rm \
mod \ }{\mathcal S}$ can be defined as follows. For any $h\in
\mathcal{QS}$, let $\tilde{h}$ be a quasiconformal extension of
$h$ to a small neighborhood $U$ of $T$ in the complex plane. Let
$$
\mu_{\tilde{h}}
=\frac{\tilde{h}_{\overline{z}}}{\tilde{h}_{z}},\quad z\in U
$$
and
$$
k_{\tilde{h}}=\|\mu(z)\|_{\infty, U} \quad \hbox{and}\quad
B_{\tilde{h}} =\frac{1+k_{\tilde{h}}}{1-k_{\tilde{h}}}.
$$
Then the boundary dilatation $h$ is defined as
$$
B_{h} =\inf_{U, \tilde{h}} B_{\tilde{h}},
$$
where the infimum is taken over all quasiconformal extensions
$\tilde{h}$ of $h$ in a neighborhood $U$ of $T$. It is known that
$h$ is symmetric if and only if $B_{h}=1$. Define
$$
\tilde{d} (h_{1},h_{2}) =\frac{1}{2} \log B_{h_{2}^{-1}h_{1}}.
$$
The two metrics $\overline{d}$ and $\tilde{d}$ on $\mathcal{QS}
{\rm \ mod \ } {\mathcal S}$ are equal.

The Teichm\"uller type metric $d_{\mathcal T} (\cdot, \cdot)$ on
${\mathcal T}{\mathcal C}^{1+}$ is defined similarly as follows.
Let $\Pi$ and $\Pi'$ be two points in ${\mathcal T}{\mathcal
C}^{1+}$. Then
$$
d_{\mathcal T} (\Pi, \Pi') = \frac{1}{2} \log B_{h_{f}^{-1}\circ
h_{g}},
$$
where $\Pi, \Pi' \in {\mathcal T}{\mathcal C}^{1+}$ and $(f,
h_{f})\in \Pi$ and $(g,\tau_{g})\in \Pi'$. Since $d_{\mathcal
T}(\cdot,\cdot)$ is defined by $\tilde{d}(\cdot,\cdot)$, it easy
to check it satisfies the symmetric condition and the triangle
inequality. If we have that $d_{\mathcal T} (\Pi, \Pi')=0$ if and
only if $\Pi=\Pi'$, then $d_{\mathcal T}(\cdot,\cdot)$ is indeed a
metric. To prove this property, we need the following rigidity
result.

Suppose $f, g\in {\mathcal C}^{1+}$ are conjugate by an
orientation-preserving homeomorphism $h$, that is,
$$
f \circ h=h\circ g.
$$
If $h$ is differentiable at $p \in T$, then,
from the last equation, $h$ is differentiable at all points in
$$
BI(p)= \cup_{n=0}^{\infty} f^{-n} (p),
$$
the set of all backward images of $p$.

\vspace*{10pt}
\begin{definition}~\label{diffub}
We call $h$ differentiable at $p\in T$ with uniform bound if
there are a small neighborhood $Z$ of $p$ and a constant $C>0$
such that
$$
C^{-1} \leq |h'(q)| \leq C, \quad q\in BI(p)\cap Z.
$$
\end{definition}

\vspace*{10pt}
\begin{theorem}~\label{onepointrigidity}
Suppose $f, g\in {\mathcal C}^{1+}$ are conjugate by an
orientation-preserving homeomorphism $h$, that is, $f \circ
h=h\circ g$. Then $h$ is a $C^{1}$-diffeomorphism if and only if
$h$ is differentiable at one point with uniform bound.
\end{theorem}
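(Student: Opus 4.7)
The trivial direction is immediate: if $h$ is a $C^{1}$-diffeomorphism of $T$, then $h'$ is continuous and nonvanishing on the compact circle, hence bounded globally between $C^{-1}$ and $C$ for some $C>0$, so $h$ is differentiable at any chosen point $p$ with uniform bound in the sense of Definition~\ref{diffub}.

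For the substantive direction, assume $h$ is differentiable at $p$ with uniform bound $C^{-1}\leq |h'(q)|\leq C$ for $q\in BI(p)\cap Z$. The plan is to combine the iterated chain-rule identity
\[
h'(q)\;=\;\frac{(g^{n})'(q)}{(f^{n})'(h(q))}\, h'(g^{n}(q)),
\]
obtained from $f\circ h=h\circ g$, with the H\"older distortion estimate of Example~\ref{holderexample} for the $C^{1+\alpha}$ expanding maps $f$ and $g$ and with the quasisymmetry of $h$ from Corollary~\ref{quasisymmetriccojugacy}. The goal is to first extend the uniform bound on $|h'|$ from $BI(p)\cap Z$ to all of $BI(p)$ (which is dense in $T$ by expansion), then to show $h'$ is uniformly continuous on $BI(p)$, and finally to extend $h'$ to a continuous, nonvanishing function on $T$ which is verified to be the derivative of $h$ at every point.

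The main technical step is the uniform continuity of $h'$ on $BI(p)$. Given nearby $q_{1},q_{2}\in BI(p)$, I would select $n$ maximal so that $|g^{n}([q_{1},q_{2}])|\leq \delta_{0}$ for a small fixed $\delta_{0}$, producing image points $g^{n}(q_{1}),g^{n}(q_{2})$ at macroscopic scale $\asymp\delta_{0}$. The ratio factorization
\[
\frac{h'(q_{1})}{h'(q_{2})}\;=\;\frac{(g^{n})'(q_{1})\,(f^{n})'(h(q_{2}))}{(g^{n})'(q_{2})\,(f^{n})'(h(q_{1}))}\cdot\frac{h'(g^{n}(q_{1}))}{h'(g^{n}(q_{2}))}
\]
then has first factor equal to $1+O(\delta_{0}^{\alpha})$ by the H\"older distortion bound~(\ref{smoothdistortion}) applied to both $f^{n}$ and $g^{n}$. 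The second factor I would control by choosing $q_{1},q_{2}$ at compatible depths in the backward tree so that $g^{n}(q_{1}),g^{n}(q_{2})$ both lie in a small neighborhood of $p$ (this is possible because $BI(p)$ is dense near $p$). There, the pointwise infinitesimal identity $h(p+s)-h(p)=h'(p)s+o(s)$, combined with the global bound from step~1 and a Lemma~\ref{qsdistortion}-type quasisymmetric comparison, forces $h'(g^{n}(q_{1}))/h'(g^{n}(q_{2}))$ to be close to $1$ as $\delta_{0}\to 0$. Once $h'$ extends to a continuous function $\phi$ on $T$, a final bounded-distortion computation shows $(h(x+t)-h(x))/t\to\phi(x)$ uniformly in $x$, completing the proof that $h$ is a $C^{1}$-diffeomorphism.

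The hard part is exactly the control of $h'(g^{n}(q_{1}))/h'(g^{n}(q_{2}))$ in the continuity step, because the image points are not themselves guaranteed to lie in $BI(p)$ and only pointwise differentiability at $p$ (rather than continuity of $h'$ there) is directly in the hypothesis. The resolution is the two-level pullback described above: differentiability at the single point $p$ is promoted to quantitative comparison at nearby points through the $C^{1+\alpha}$ distortion of $f$ and $g$, which is the mechanism that turns the one-point hypothesis into a global $C^{1}$ conclusion.
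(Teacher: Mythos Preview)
Your outline correctly identifies the hard step, but the resolution you propose does not work. The hypothesis gives only that $h$ is differentiable at the single point $p$ and that $|h'|$ is uniformly \emph{bounded} on $BI(p)\cap Z$; neither of these yields continuity of $h'$ at $p$ along $BI(p)$. A function can be differentiable at $p$ while its derivative at nearby points of $BI(p)$ oscillates arbitrarily within $[C^{-1},C]$ (think of $x+x^{2}\sin(1/x)$ type behavior). Consequently, even when $g^{n}(q_{1})$ and $g^{n}(q_{2})$ are both very close to $p$, the ratio $h'(g^{n}(q_{1}))/h'(g^{n}(q_{2}))$ is only a priori in $[C^{-2},C^{2}]$, not close to $1$. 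The ``pointwise infinitesimal identity'' $h(p+s)=h(p)+h'(p)s+o(s)$ controls \emph{values} of $h$ near $p$, not \emph{derivatives} of $h$ at nearby points, and quasisymmetry does not bridge that gap. Your suggestion of choosing $q_{1},q_{2}$ ``at compatible depths'' is also problematic: uniform continuity must be proved for arbitrary pairs in $BI(p)$, not specially selected ones.

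The paper's proof avoids this obstacle by inserting an intermediate measure-theoretic step. First, the uniform bound on $|h'|$ over $BI(p)\cap Z$ is used (via bounded distortion and the fact that returns to $I=\overline{Z}$ have full measure) to show only that $H|I$ is bi-Lipschitz --- a much weaker conclusion than $C^{1}$. Bi-Lipschitz implies a.e.\ differentiability, and then a Lusin/density-point argument produces a \emph{new} reference point $y_{0}$ and a set $E_{0}$ of full density at $y_{0}$ on which $H'|E_{0}$ is genuinely continuous at $y_{0}$. Only now is the pullback/ratio argument run, with $y_{0}$ (not $p$) as the anchor: because $H'|E_{0}$ is continuous at $y_{0}$, the residual factor $H'(z_{k})/H'(w_{k})\to 1$, and the H\"older distortion controls the rest. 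The missing idea in your proposal is precisely this bootstrap from ``bounded derivative on a dense set'' to ``Lipschitz, hence a.e.\ differentiable, hence a Lusin continuity point exists,'' which manufactures the continuity that your argument simply assumed.
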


\begin{proof}
Note that $h$ is differentiable if and only if its lift $H$ is
differentiable. If $H$ is a $C^{1}$-diffeomorphism, then
$$
1= H(1)-H(0)= \int_{0}^{1} H'(x) dx.
$$
So there is at least one point in $[0,1]$ such that $H'(x)\neq 0$.
This is the ``only if" part.

To prove the ``if" part, suppose $H$ is differentiable at $x_{0}$ with uniform bound.
Let $p_{0}=\pi (x_{0})$. Then the set of all backward images $BI(p_{0})$ of $p_{0}$ is dense in $T$.
The lift set $\tilde{BI}(x_{0})$ of $BI(p_{0})$ to $[0,1]$ are all points
$$
x_{nm} = F^{-n}(x_{0}+m), \quad n=0, 1, \cdots, \;\; m=0, 1,
\cdots, d^{n}-1.
$$
Since
$$
H(F^{n}(x_{mn}))= G^{n} (H(x_{nm})) \pmod{1},
$$
$$
H'(x_{nm}) =
\frac{H'(x_{0})(F^{n})'(x_{nm})}{(G^{n})'(H(x_{nm}))}.
$$
So $H$ is differentiable at every point in $\tilde{S}$ with
non-zero derivatives. Thus we can take $x_{0}\in (0,1)$.

Let
$$
x_{0}\in \cdots \subset I_{w_{k}}\subset I_{w_{k-1}}\subset \cdots
I_{w_{1}} \subset [0,1]
$$
be a sequence of nested intervals in the sequence of Markov
partitions $\{ \eta_{k}\}_{k=0}^{\infty}$. Assume $I=\overline{Z}=I_{w_{n_{0}}}$ is the
closure of a neighborhood of $x_{0}$ in Definition~\ref{diffub}, that is, there is a constant
$C_{0}>0$ such that
$$
C^{-1}_{0} \leq |H'(x)| \leq C_{0}, \quad x\in \tilde{BI}(x_{0})\cap I.
$$

Consider the set $S(I)$ of all intervals
$J\in \eta_{n_{0}+k}$ such that $J\subset I$ and $F^{k}(J)=I
\pmod{1}$ for $k=1,2,\cdots$. Let $\Omega(I)$ be the union of all
these intervals. Then, just by the expanding property of $f$, the
set $\Omega (I)$ has a full Lebesgue measure in $I$.

For any $J\in S(I)$, $F^{k}(J)=I \pmod{1}$ and $G^{k}(H(J)) =H(I)
\pmod{1}$ for some $k\geq 1$. We have
$$
\frac{|H(J)|}{|J|} = \frac{(F^{k})' (\xi)}{(G^{k})'
(\eta)}\frac{|H(I)|}{|I|}.
$$

Take $x\in \tilde{BI}(x_{0})\cap J$. Then $y=F^{k}(x)\in \tilde{BI}(x_{0})\cap I$
and
$$
\frac{(F^{k})'(x)}{(G^{k})'(H(x))} =\frac{H'(x)}{H'(y)}.
$$
Thus
$$
C^{-2}_{0}\leq \frac{(F^{k})'(x)}{(G^{k})'(H(x))}\leq C_{0}^{2}.
$$
This implies
$$
C^{-2}_{0}  \frac{(F^{k})' (x)}{(F^{k})' (\xi)} \frac{(G^{k})'
(\eta)}{(G^{k})' (x)}\frac{|H(I)|}{|I|}\leq \frac{|H(J)|}{|J|}
\leq C^{2}_{0} \frac{(F^{k})' (\xi)}{(F^{k})' (x)} \frac{(G^{k})'
(x)}{(G^{k})' (\eta)}\frac{|H(I)|}{|I|}.
$$
From the H\"older distortion property~(\ref{smoothdistortion}),
there is a constant $C_{1}>1$ such that
$$
C_{1}^{-1} \leq \frac{|H(J)|}{|J|} \leq C_{1}.
$$
Since both $\Omega (I)$ and $H(\Omega (I))$ have full measures in
$I$ and $H(I)$, respectively, from the additive formula, this
implies that $H|I$ is bi-Lipschtz.

Since $H|I$ is bi-Lipschitz, $H'$ exists a.e. in $I$ and is
integrable. Since $(H|I)'(x)$ is measurable and $H|I$ is a
homeomorphism, we can find a point $y_{0}$ in $I$ and a subset
$E_{0}$ containing $y_{0}$ such that
\begin{itemize}
\item[{\rm 1)}] $H|I$ is differentiable at every point in $E_{0}$;
\item[{\rm 2)}] $y_{0}$ is a density point of $E_{0}$;
\item[{\rm 3)}] $H'(y_{0})\neq 0$; and
\item[{\rm 4)}] the derivative $H'|E_{0}$ is
continuous at $y_{0}$.
\end{itemize}

Since $[0,1]$ is compact, there is a subsequence $\{F^{
n_{k}}(y_{0}) \pmod{1} \}_{k=1}^{\infty}$ converging to a point
$z_{0}$ in $[0,1]$. Without loss of generality, assume $z_{0}\in
(0,1)$. Let $I_{0}=(a, b)$ be an open interval about $z_{0}$.
There is a sequence of interval $\{I_{k}\}_{k=1}^{\infty}$ such
that $y_{0}\in
 I_{k}\subseteq I$ and $F^{ n_{k}}: I_{k}\rightarrow I_{0} \pmod{1}$ is a
$C^{1+\alpha}$ diffeomorphism. Then $|I_{k}|$ goes to zero as $k$
tends to infinity.

From the H\"older distortion property~(\ref{smoothdistortion}),
there is a constant $C_{2}>0$, such that
$$
\Big| \log \Big( \frac{|(F^{ n_{k}})'(w)|}{|(F^{
 n_{k}})'(z)|}\Big) \Big|\leq  C_{2}, \quad \forall w, z\in I_{k},\;\; \forall  k\geq 1.
$$

Since $y_{0}$ is a density point of $E_{0}$, for any integer
$s>0$, there is an integer $k_{s}>0$ such that
$$ \frac{|E_{0}\cap
I_{k}|}{|I_{k}|} \geq 1-\frac{1}{s}, \quad \forall k\geq k_{s}.
$$
Let $E_{k}=F^{n_{k}}(E_{0}\cap I_{k})\pmod{1}$. Then $H$ is
differentiable at every point in $E_{k}$ and, from the H\"older
distortion property~(\ref{smoothdistortion}), there is a constant
$C_{3}>0$ such that
$$
\frac{|E_{k}\cap I_{0}|}{|I_{0}|} \geq 1-\frac{C_{3}}{s}, \quad
\forall k\geq k_{s}.
$$
Let
$$
E=\cap_{s=1}^{\infty}\cup_{k\geq k_{s}} E_{k}.
$$
Then $E$ has full measure in $I_{0}$ and $H$ is differentiable at
every point in $E$ with non-zero derivative.

Next, we are going to prove that $H'|E$ is uniformly continuous.
For any $x$ and $y$ in $E$, let $z_{k}$ and $w_{k}$ be the
preimages of $x$ and $y$ under the diffeomorphism
$F^{n_{k}}:I_{k}\rightarrow I_{0} \pmod{1}$. Then $z_{k}$ and
$w_{k}$ are in $E_{0}$. From $ H\circ F=G\circ H \pmod{1} $, we
have that
$$
 H'(x) = \frac{(G^{ n_{k}})'(H(z_{k}))}{(F^{ n_{k}})'(z_{k})}H'(z_{k})
$$
and
$$
H'(y) = \frac{(G^{
n_{k}})'(H(w_{k}))}{(F^{n_{k}})'(w_{k})}H'(w_{k}).
$$
So
$$
\Big| \log \Big( \frac{H'(x)}{H'(y)}\Big) \Big| \leq \Big| \log
\Big| \frac{(G^{ n_{k}})'(H(z_{k}))}{ (G^{
n_{k}})'(H(w_{k}))}\Big| \Big| + \Big| \log \Big| \frac{(F^{
n_{k}})'(w_{k})}{(F^{n_{k}})'(z_{k})}\Big| \Big| + \Big| \log
\Big( \frac{H'(z_{k})}{H'(w_{k})}\Big) \Big|.
$$

Suppose both $f$ and $g$ are $C^{1+\alpha}$ for some $0<\alpha\leq
1$. From the H\"older distortion
property~(\ref{smoothdistortion}), there is a constant $C_{4}>0$
such that
$$\Bigl|
\log \Big| \frac{(F^{ n_{k}})'(w_{k})}{(F^{ n_{k}})'(z_{k})}\Big|
\Bigr| \leq C_{4} |x-y|^{\alpha}
$$
and
$$ \Bigl| \log
\Big| \frac{(G^{ n_{k}})'(H(z_{k}))}{(G^{n_{k}})'(H(w_{k}))}\Big|
\Bigr| \leq C_{4} |H(x)-H(y)|^{\alpha} $$ for all $k\geq 1$.
Therefore,
$$ \Big| \log \Big(
\frac{H'(x)}{H'(y)} \Big) \Big| \leq C_{4}\Big(|x-y|^{\alpha}
+|H(x)- H(y)|^{\alpha}\Big) +\Big| \log \Big(
 \frac{H'(z_{k})}{H'(w_{k})} \Big) \Big|
$$
for all $k\geq 1$. Since $H'|E_{0}$ is continuous at $y_{0}$, the
last term in the last inequality tends to zero as $k$ goes to
infinity. Hence
$$ \Big| \log \Big( \frac{H'(x)}{H'(y)} \Big)
\Big| \leq C_{4}\Big( |x-y|^{\alpha} +|H(x)- H(y)|^{\alpha}\Big).
$$
This means that $H'|E$ is uniformly continuous. So it can be
extended to a continuous function $\phi$ on $I_{0}$. Because
$H|I_{0}$ is absolutely continuous and $E$ has full measure,
$$ H(x)
=H(a)+\int_{a}^{x}H'(x)dx =H(a) +\int_{a}^{x}\phi(x)dx
$$
on $I_{0}$. This implies that $H|I_{0}$ is actually $C^{1}$.
(This, furthermore, implies that $H|I_{0}$ is $C^{1+\alpha}$).

Now for any $x\in [0,1]$, let $J$ be an open interval about $x$.
By the expanding condition on $f$, there is an integer $n>0$ and
an open interval $J_{0}\subset I_{0}$ such that $F^{ n}: J_{0}\to
J \pmod{1}$ is a $C^{1+\alpha}$ diffeomorphism. By the equation
$H\circ F=G\circ H$, we have that $H|J$ is $C^{1+\alpha}$.
Therefore, $H$ is $C^{1+\alpha}$. We proved the theorem.
\end{proof}

\begin{remark} This kind of the rigidity phenomenon has been also
studied for one-dimensional dynamical systems with critical points
in~\cite{Jiang1,Jiang2,Jiang3,Jiang4,Jiang5}.
\end{remark}

\begin{remark}
If $f$ and $g$ in Theorem~\ref{onepointrigidity} are both $C^{1+1}$, then one can prove that $h$ is bi-Lipschitz by using
a different argument which was given by Sullivan in his lectures at the CUNY Graduate Center during 1986-1989~\cite{SullivanNote}
as follows.

Suppose $h$ is differentiable at a point $x_{0}$ on the circle. Then
$$
h(x)= h(x_{0}) +h'(x_{0}) (x-x_{0}) +o (|x-x_{0}|)
$$
for $x$ close to $x_{0}$. Suppose
$$
f\circ h=h\circ g.
$$
Consider $\{x_{n}=f^{n}(x_{0})\}_{n=0}^{\infty}$. Let $0<a<1$ be a fixed number. Consider the interval $I_{n}=(x_{n}, x_{n}+a)$.
Let $J_{n}=(x_{0}, z_{n})$ be an interval such that
$$
f^{n}: J_{n}\to I_{n}
$$
is a $C^{1+1}$ diffeomorphism.
Let $f^{-n}: I_{n}\to J_{n} $ denote its inverse.
Since $f$ is expanding, the length $|J_{n}|\to 0$ as $n\to \infty$.
Similarly, we have that
$$
g^{n}: h(J_{n}) \to h(I_{n})
$$
is a $C^{1+1}$ diffeomorphism. Let $g^{-n}: h(I_{n}) \to h(J_{n})$ be its inverse.
Then we have that
$$
h(x)= g^{n}\circ h\circ f^{-n} (x), \quad x\in I_{n}.
$$
Let
$$
\alpha_{n} (x) = \frac{x-x_{0}}{x_{n}-x_{0}}: J_{n}\to (0,1)
$$
and
$$
\beta_{n} (x) = \frac{x-h(x_{0})}{h(x_{n})-h(x_{0})}: h(J_{n})\to (0,1).
$$
Then
$$
h(x) = (g^{n}\circ \beta_{n}^{-1})\circ (\beta_{n}\circ h\circ \alpha_{n}^{-1})\circ (\alpha_{n}\circ f^{-n}) (x), \;\;x\in I_{n}
$$
From the H\"older distortion property~(\ref{smoothdistortion}) for $\alpha =1$, we get
$$
\Big| \log |\frac{(f^{-n})' (x)}{(f^{-n})'(y)}|\Big| \leq C |x-y|, \quad \forall \; x, y\in I_{n}
$$
and
$$
\Big| \log |\frac{(g^{-n})' (x)}{(f^{-n})'(y)}|\Big| \leq C |x-y|, \quad \forall \; x, y\in h(I_{n}).
$$
This implies that $g^{n}\circ \beta_{n}^{-1}$ and $\alpha_{n}\circ f^{-n}$ are a sequence of b-Lipschitz homeomorphisms with a uniform Lipschitz constant. Therefore, they have convergent subsequences. The map $\beta_{n}\circ h\circ \alpha_{n}^{-1}$ converges to a linear map. Since the unit circle is compact and all $I_{n}$ with a fixed length $a$,
$\cap_{n=1}^{\infty} I_{n}$ contains an interval $I$. Thus $h$ is a bi-Lipschitz homeomorphism on $I$. Since $f$ and $g$ are expanding, this implies that $h$ is bi-Lipschitz on the whole unit circle $T$.

However, this argument does not work for the case when $0<\alpha <1$. The reason is that in this case, we have only
$$
\Big| \log |\frac{(f^{-n})' (x)}{(f^{-n})'(y)}|\Big| \leq C |x-y|^{\alpha}, \quad \forall \; x, y\in I_{n}
$$
and
$$
\Big| \log |\frac{(g^{-n})' (x)}{(f^{-n})'(y)}|\Big| \leq C |x-y|^{\alpha}, \quad \forall \; x, y\in h(I_{n})
$$
from the H\"older distortion property~(\ref{smoothdistortion}). Therefore, $g^{n}\circ \beta_{n}^{-1}$ and $\alpha_{n}\circ f^{-n}$ are only a sequence of $\alpha$-H\"older homeomorphisms with a uniform H\"older constant. We can not conclude that $h$ is bi-Lipschitz. The method developed in~\cite{Jiang1,Jiang2,Jiang3,Jiang4,Jiang5} (which is presented in the proof of Theorem~\ref{onepointrigidity}) is, in particular, useful for maps having only $C^{1+\alpha}$ smoothness for $0<\alpha<1$.
\end{remark}

\vspace*{10pt}
\begin{theorem}~\label{smoothT}
Suppose $f, g\in {\mathcal C}^{1+}$. Then $(f,h_{f}) \sim_{T}
(g,h_{g})$ if and only if $D^{*}(f) =D^{*}(g)$. Furthermore,
$d_{\mathcal T} (\Pi, \Pi')=0$ if and only if $\Pi=\Pi'$.
\end{theorem}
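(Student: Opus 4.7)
The plan is to prove first that $(f,h_f)\sim_T(g,h_g)\iff D^*(f)=D^*(g)$, and then deduce the separation property $d_{\mathcal T}(\Pi,\Pi')=0\iff \Pi=\Pi'$ by showing that $d_{\mathcal T}(\Pi,\Pi')=0$ is also equivalent to $D^*(f)=D^*(g)$. Throughout, set $h=h_f\circ h_g^{-1}$; this conjugates $g$ to $f$ and sends the $g$-partition onto the $f$-partition element by element, so $h(I_{w_n,g})=I_{w_n,f}$ for every admissible word $w_n$.

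For the ``only if'' direction of the main equivalence, suppose $h$ is a $C^1$-diffeomorphism. The mean value theorem gives $|I_{w_n,f}|=h'(\xi_n)\,|I_{w_n,g}|$ and $|I_{v_{n-1},f}|=h'(\eta_n)\,|I_{v_{n-1},g}|$ for some $\xi_n\in I_{w_n,g}$ and $\eta_n\in I_{v_{n-1},g}$, so
\[
\frac{D^*(f)(w_n^*)}{D^*(g)(w_n^*)}=\frac{h'(\eta_n)}{h'(\xi_n)}.
\]
Since $I_{w_n,g}$ and $I_{v_{n-1},g}$ both shrink to $x_w$ by the bounded geometry coming from Theorem~\ref{boundedgeometrythm}, and $h'$ is continuous and nowhere zero, this ratio tends to $1$; passing to the limit yields $D^*(f)=D^*(g)$.

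For the ``if'' direction, assume $D^*(f)=D^*(g)$ and aim to apply Theorem~\ref{onepointrigidity}. The telescoping identity
\[
\frac{|I_{w_n,f}|}{|I_{w_n,g}|}=\prod_{k=1}^{n}\frac{D^*(g)(w_k^*)}{D^*(f)(w_k^*)},
\]
together with the H\"older continuity of the two dual derivatives from Theorem~\ref{dualderivative}, gives $\bigl|\log\bigl(D^*(g)(w_k^*)/D^*(f)(w_k^*)\bigr)\bigr|\leq C\tau^{k}$; hence the product converges to a limit $L(w)>0$ that is uniformly bounded away from $0$ and $\infty$. Using the H\"older distortion property~\eqref{smoothdistortion} applied to $g^n$ and $f^n$, combined with the bounded nearby geometry of Theorem~\ref{boundedgeometrythm}, the convergence $|h(I_{w_n,g})|/|I_{w_n,g}|\to L(w)$ on partition elements upgrades to the honest two-sided limit $\lim_{y\to x_w}(h(y)-h(x_w))/(y-x_w)=L(w)$, so $h$ is differentiable at $p=x_w$ with $h'(p)=L(w)$. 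For every $q\in BI(p)$ with $q\in g^{-n}(p)$, the conjugation equation yields $h'(q)=h'(p)\,(g^n)'(q)/(f^n)'(h(q))\asymp h'(p)\,|I_{w_n,f}|/|I_{w_n,g}|$, which is uniformly bounded above and below; Definition~\ref{diffub} is thus satisfied, so Theorem~\ref{onepointrigidity} concludes that $h\in C^1$, i.e., $\Pi=\Pi'$.

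For the ``furthermore'', $\Pi=\Pi'$ implies $h\in C^1$, which is symmetric by Example~\ref{smoothimplysymmetry}, so the boundary dilatation $B_h=1$ and $d_{\mathcal T}(\Pi,\Pi')=0$. Conversely, if $d_{\mathcal T}(\Pi,\Pi')=0$ then $h$ is symmetric; rescaling $I_{v_{n-1},g}$ to $[0,1]$, the induced map is $M_n$-quasisymmetric with $M_n=1+\varepsilon(|I_{v_{n-1},g}|)\to 1$. Lemma~\ref{qsdistortion} then says this renormalized map is $\zeta(M_n)$-close to the identity with $\zeta(M_n)\to 0$, which translates into $|D^*(f)(w_n^*)-D^*(g)(w_n^*)|\to 0$ as $n\to\infty$. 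Since both dual derivatives converge (Theorem~\ref{dualderivative}), their limits coincide, so $D^*(f)=D^*(g)$, and the main equivalence then yields $\Pi=\Pi'$. The principal technical obstacle is in the ``if'' direction: converting the convergence of partition-element ratios to genuine two-sided differentiability of $h$ at $x_w$ and verifying the uniform-bound hypothesis on $BI(p)$, which requires careful combination of H\"older distortion~\eqref{smoothdistortion}, bounded nearby geometry, and the quasisymmetry of $h$ from Corollary~\ref{quasisymmetriccojugacy}.
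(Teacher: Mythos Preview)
Your ``only if'' direction and the ``furthermore'' part are essentially the same as the paper's. One imprecision: the phrase ``both shrink to $x_w$'' is not quite right, since for a fixed $w^*\in\Sigma^*$ the intervals $I_{w_n,g}$ are not nested and need not converge to a single point. What you actually need, and what suffices, is $|I_{v_{n-1},g}|\to 0$, so $|\xi_n-\eta_n|\to 0$ and uniform continuity of $h'$ gives $h'(\eta_n)/h'(\xi_n)\to 1$.

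The real divergence from the paper is in the ``if'' direction, where you take a substantially harder path and leave a genuine gap. The paper does not attempt to prove that $h$ is differentiable at any specific point $x_w$. Once the telescoping product is uniformly bounded (your argument for this is correct in substance, though note that the factors should be indexed by the duals of the length-$k$ \emph{prefixes} of the fixed word $w_n$, not by the right-truncations $w_k^*$ of a fixed $w^*$; these are different sequences of finite words), the paper simply observes that $|H(I_{w_n,g})|/|I_{w_n,g}|\leq C''$ for every word $w_n$, which by additivity over the partition makes $H$ Lipschitz. A Lipschitz function is absolutely continuous, hence differentiable almost everywhere, and since $H$ is a homeomorphism some such point has nonzero derivative. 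The two-sided bound on the product (swap $f$ and $g$) gives bi-Lipschitz, so the uniform-bound hypothesis of Definition~\ref{diffub} is automatic wherever $H'$ exists. Theorem~\ref{onepointrigidity} then finishes.

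Your ``upgrade'' step, passing from $|h(I_{w_n,g})|/|I_{w_n,g}|\to L(w)$ along nested partition intervals to the honest two-sided derivative $h'(x_w)=L(w)$, is precisely the kind of statement that fails for general functions, and the ingredients you cite (H\"older distortion, bounded nearby geometry, quasisymmetry of $h$) do not assemble into a proof without real work; carrying it out would essentially reproduce the bi-Lipschitz-to-$C^1$ portion already inside the proof of Theorem~\ref{onepointrigidity}. The paper's route via Lipschitz $\Rightarrow$ differentiable a.e.\ sidesteps this entirely and is the clean way through.
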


\begin{proof}
Let $\{ \eta_{n,f}\}_{n=1}^{\infty}$ and $\{
\eta_{n,g}\}_{n=1}^{\infty}$ be the corresponding sequences of
nested partitions on $[0,1]$ for $f$ and $g$. Let
$$
h=h_{f}\circ h_{g}^{-1}.
$$
Then
$$
f\circ h= h\circ g.
$$
Suppose $H$ is the lift of $h$ such that $H(0)=0$. Then for any
interval $I_{w_{n}}\in \eta_{n,f}$, $H(I_{w_{n}})\in \eta_{n,g}$.

Suppose $(f,h_{f}) \sim_{T} (g,h_{g})$. Then $h$ is a
$C^{1}$-diffeomorphism of $T$. For any $w^{*}=\cdots j_{n-1}\cdots
j_{1}j_{0}\in \Sigma^{*}$, let $w^{*}_{n} =j_{n-1}\cdots
j_{1}j_{0}$ and $v_{n-1}^{*} = j_{n-1}\cdots j_{1}$. Then
$$
D^{*}(g)(w_{n}^{*}) = \frac{|H(I_{v_{n-1}})|}{|H(I_{w_{n}})|} =
\frac{H'(\xi)}{H'(\varrho)} \frac{|I_{v_{n-1}}|}{|I_{w_{n}}|}
=\frac{H'(\xi)}{H'(\varrho)} D^{*}(f)(w_{n}^{*}).
$$
This implies that
$$
D^{*}(g)(w^{*}) =D^{*}(f)(w^{*}).
$$

Now suppose $D^{*}(g)(w^{*}) =D^{*}(f)(w^{*})$. Since $f$ and $g$
are both $C^{1+\alpha}$ expanding for some $0<\alpha\leq 1$, there
are constants $C>0$ and $0<\tau<1$ such that
$$
|D^{*}(f) (w^{*}) - D^{*}(f) (w_{n}^{*})|\leq C\tau^{n} \quad
\hbox{and}\quad |D^{*}(g) (w^{*}) - D^{*}(g) (w_{n}^{*})|\leq
C\tau^{n}.
$$
This implies that there is a constant $C'>0$ such that
$$
\frac{D^{*}(g) (w_{n}^{*})}{D^{*}(f) (w_{n}^{*})} \leq
1+C'\tau^{n}, \quad \forall n>0.
$$
Let $C''=\prod_{n=0}^{\infty} (1+C'\tau')$. Then
$$
\frac{|H(I_{w_{n}})|}{|I_{w_{n}}|} = \prod_{k=0}^{n}
\frac{D^{*}(g) (w_{n-k}^{*})}{D^{*}(f) (w_{n-k}^{*})}\leq C'',
\quad \forall w_{n}, \; \forall n>0.
$$
From the additive formula, we conclude that $H$ is Lipschitz
continuous. But a Lipschitz continuous function is absolutely
continuous (at this point, we can also use a theorem of Shub and
Sullivan~\cite{ShubSullivan} to show that $h$ is a
$C^{1}$-diffeomorphism), so it is differentiable almost
everywhere. Since $H$ is a homeomorphism, it must have a
differentiable point with non-zero derivative. Now
Theorem~\ref{onepointrigidity} implies that that $h$ is
$C^{1}$-diffeomorphism.

From the definition, $d_{\mathcal T} (\Pi, \Pi')=0$ if and only if
$h=h_{f}^{-1}\circ h_{g}$ is symmetric. If $\Pi=\Pi'$, then
$h=h_{f}^{-1}\circ h_{g}$ is $C^{1}$-diffeomorphism. So it is
symmetric.

On the other hand, if $h=h_{f}^{-1}\circ h_{g}$ is symmetric, then
from Lemma~\ref{qsdistortion}, there is a bounded function
$\varepsilon (t)>0$ such that $\varepsilon (t)\to 0$ as $t\to 0$
and a constant $0<\tau<1$ such that
$$
|D^{*}(g)(w_{n}^{*})-D^{*}(f)(w_{n}^{*})| = \Big|
\frac{|H(I_{v_{n-1}})|}{|H(I_{w_{n}})|}-
\frac{|I_{v_{n-1}}|}{|I_{w_{n}}|}\Big| \leq \varepsilon
(\tau^{n}).
$$
We get
$$
D^{*}(g)(w^{*})=D^{*}(f)(w^{*}).
$$
This further implies that $h$ is a $C^{1}$-diffeomorphism. So
$\Pi=\Pi'$.
\end{proof}

\begin{definition} We call $d_{\mathcal T}(\cdot,\cdot)$ the
Teichm\"uller metric on ${\mathcal T}{\mathcal C}^{1+}$.
\end{definition}

Following Theorem~\ref{smoothT}, we can set up a one-to-one
correspondence between the Teichm\"uller space ${\mathcal
T}{\mathcal C}^{1+}$ and the space of all H\"older continuous dual
derivatives:
$$
\Pi =[(f, h_{f})] \to D^{*}(f)(w^{*}).
$$
Therefore,
\begin{equation}~\label{smoothrepresentation}
{\mathcal T}{\mathcal C}^{1+}=\{ D^{*}(f)(w^{*})\;\; |\;\; f\in
{\mathcal C}^{1+}\}
\end{equation}
equipped with the Teichm\"uller metric $d_{\mathcal T}(\cdot,
\cdot)$. However, this is not a complete space. Next we will study
the completion of this space.

Let $d\geq 2$ be the same fixed integer. Suppose ${\mathcal
U}{\mathcal S}$ is the space of all uniformly symmetric circle
endomorphisms of degree $d$. We define the Teichm\"uller space for
${\mathcal U}{\mathcal S}$ as we did for ${\mathcal C}^{1+}$.

Let $q_{d} (z) =z^{d}$ be the basepoint in ${\mathcal U}{\mathcal
S}$. A {\em marked circle endomorphism} by $q_{d}$ is a pair
$(f,h_{f})$, where $f\in {\mathcal U}{\mathcal S}$ and $h_{f}$ is
the orientation-preserving homeomorphism of $T$ such that
$h_{f}(1)=1$ and
$$
f\circ h_f= h_f\circ q_{d}.
$$
From Corollary~\ref{quasisymmetriccojugacy}, for any marked circle
endomorphism $(f,h_{f})$ by $q_{d}$, $h_{f}$ is quasisymmetric.
Thus we can define Teichm\"uller equivalence relation $\sim_{T}$,
Teichm\"uller space, and Teichm\'uller metric as follows.

\begin{definition}
Two marked circle endomorphisms are equivalent, denoted as
$(f,h_{f}) \sim_{T} (g,h_{g})$, if $h_{f}\circ h_{g}^{-1}$ is a
symmetric homeomorphism.
\end{definition}

\begin{definition}
The Teichm\"uller space
$$
{\mathcal T}{\mathcal U}{\mathcal S}=\{ [(f, h_f)] \;|\; f\in
{\mathcal U}{\mathcal S}, \; \hbox{with the basepoint $[(q_{d},
id)]$}\}
$$
is the space of all $\sim_{T}$-equivalence classes $[(f, h_f)]$ in
the space of all marked circle endomorphisms by $q_{d}$.
Teichm\"uller metric $d_{\mathcal T}(\cdot, \cdot)$ is defined as
$$
d_{\mathcal T}(\Psi, \Psi') = \frac{1}{2} \log B_{h_{f}^{-1}\circ
h_{g}}
$$
where $(f, h_{f})\in \Psi$ and $(g,h_{g})\in \Psi'$.
\end{definition}

If $f, g\in {\mathcal C}^{1+}$ and if the conjugacy $h$ between
$f$ and $g$ is symmetric, then from Theorem~\ref{smoothT}, $h$
must be a $C^{1}$-diffeomorphism. This implies that the
Teichm\"uller space ${\mathcal T} {\mathcal C}^{1+}$ is indeed a
subspace of the Teichm\"uller space ${\mathcal T}{\mathcal
U}{\mathcal S}$. Furthermore, we have that

\vspace*{10pt}
\begin{theorem}~\label{completion}
The space $({\mathcal T}{\mathcal U}{\mathcal S}, d_{\mathcal
T}(\cdot, \cdot))$ is a complete space and is the completion of
the space $({\mathcal T} {\mathcal C}^{1+}, d_{\mathcal T}(\cdot,
\cdot))$.
\end{theorem}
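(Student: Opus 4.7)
The plan is to establish two components: first, that $({\mathcal T}{\mathcal U}{\mathcal S}, d_{\mathcal T})$ is a complete metric space, and second, that ${\mathcal T}{\mathcal C}^{1+}$ is dense in it. Together these give the completion claim. The central tools are Theorem~\ref{uaatouac}, which identifies uniform symmetry of $f$ with asymptotic conformality, and the known completeness of the quotient universal Teichm\"uller space $\mathcal{QS} {\rm \ mod \ } {\mathcal S}$ under the boundary-dilatation metric.

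For completeness, I take a Cauchy sequence $\{\Psi_n\}$ in ${\mathcal T}{\mathcal U}{\mathcal S}$ with representatives $(f_n, h_{f_n})$ normalized by $h_{f_n}(1)=1$. The definition $d_{\mathcal T}(\Psi_n, \Psi_m) = \frac{1}{2}\log B_{h_{f_n}^{-1}\circ h_{f_m}}$ shows that $\{[h_{f_n}]\}$ is Cauchy in the complete space $\mathcal{QS} {\rm \ mod \ } {\mathcal S}$, so there is a quasisymmetric $h$, chosen with $h(1)=1$, such that $[h_{f_n}] \to [h]$. Set $f = h \circ q_d \circ h^{-1}$. The crucial step is to verify that $f \in {\mathcal U}{\mathcal S}$, which by Theorem~\ref{uaatouac} is equivalent to showing $f$ is asymptotically conformal. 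Each $f_n$ admits a quasiregular extension to an annular neighborhood of $T$ whose Beltrami coefficient has $L^{\infty}$-norm approaching zero near $T$; transporting these extensions by the quasiconformal extensions of the conjugacies $h_{f_n}^{-1} \circ h$ (whose boundary dilatations tend to $1$) and extracting a normal-family limit produces an asymptotically conformal extension of $f$. Hence $\Psi := [(f,h)] \in {\mathcal T}{\mathcal U}{\mathcal S}$ and $d_{\mathcal T}(\Psi_n, \Psi) \to 0$.

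For density, let $\Psi = [(f, h_f)] \in {\mathcal T}{\mathcal U}{\mathcal S}$. By Theorem~\ref{uaatouac}, $f$ admits a quasiregular extension $F$ to a neighborhood of $T$ inside the closed disk whose Beltrami coefficient $\mu$ satisfies $\|\mu\|_{L^{\infty}(A_n)} \to 0$ on the shrinking annuli $A_n = \{1-1/n < |z| < 1\}$. Truncate $\mu$ by setting $\mu^{(n)} = 0$ on $A_n$ and $\mu^{(n)} = \mu$ elsewhere, and solve the measurable Riemann mapping theorem to produce $F_n$, which is conformal (indeed real-analytic) on $A_n$. Its boundary values $f_n$ are then real-analytic, degree $d$, and expanding (expansion is preserved under uniform $C^1$-perturbation of $q_d$ through conjugation), so $f_n \in {\mathcal C}^{1+}$. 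A standard composition-of-Beltrami-coefficients computation on the extension of $h_{f_n}^{-1} \circ h_f$ shows that $B_{h_{f_n}^{-1} \circ h_f}$ is controlled by $\|\mu\|_{L^{\infty}(A_n)}$, so $d_{\mathcal T}(\Psi_n, \Psi) \to 0$ with each $\Psi_n = [(f_n, h_{f_n})] \in {\mathcal T}{\mathcal C}^{1+}$.

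The main obstacle is verifying in the completeness step that the limit $f = h \circ q_d \circ h^{-1}$ is uniformly symmetric. The metric $d_{\mathcal T}$ controls only the boundary dilatations of the conjugacies, not the modulus functions $\varepsilon_{f_n}(t)$ appearing in Definition~\ref{uniformlysymmetry}, so one cannot pass to a naive pointwise limit on $\varepsilon_{f_n}$. Routing through the asymptotic-conformality characterization of Theorem~\ref{uaatouac}, which is phrased in terms of Beltrami coefficients and hence behaves well under normal-family compactness, is what makes the limit argument go through. This is precisely why the present theorem is placed at the end of \S6, after Theorem~\ref{uaatouac} has been established.
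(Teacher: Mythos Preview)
Your overall plan is sound and matches the paper's two-step structure, but both halves have gaps in execution.

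\textbf{Completeness.} You correctly identify the obstacle --- passing uniform symmetry to the limit --- but the resolution ``transport the extensions and extract a normal-family limit'' is not justified. A normal-family limit of quasiregular maps need not inherit \emph{uniform} asymptotic conformality over all iterates; what you would need is a single vanishing function $\eta(y)$ dominating $|\mu_{\tilde{f}_n^{-k}}|$ for all $n$ and $k$ simultaneously, and the Cauchy condition on boundary dilatations does not hand you that directly. The paper avoids this by working on the circle rather than in the plane: writing the inverse branches as $g_{w_k}= l_n \, g_{n,w_k}\, l_n^{-1}$ with $l_n=h^{-1}h_n$, it bounds the quasisymmetric distortion $\rho(g_{w_k},t)$ by $\rho(l_n)^2\rho(g_{n,w_k},t)$, and then uses the Cauchy condition to show the $\rho(g_{n,w_k},t)$ are dominated by a single vanishing $\epsilon(t)$. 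This direct estimate sidesteps any compactness argument.

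\textbf{Density.} There are two problems. First, you truncate the Beltrami coefficient of the degree-$d$ extension $F$ and ``solve the measurable Riemann mapping theorem to produce $F_n$''; but MRMT produces a homeomorphism, not a degree-$d$ branched cover, so this does not make sense as written. The paper instead modifies the Beltrami coefficient $\mu_{\tilde h}$ of the \emph{conjugacy} $\tilde h$, solves for a new conjugacy $\varphi$, and sets $\tilde f_\epsilon=\varphi\, q_d\,\varphi^{-1}$, which is automatically degree $d$. Second, even at the level of the conjugacy, setting $\mu=0$ near $T$ would not give a small Teichm\"uller distance, because Theorem~\ref{uaatouac} only controls the differences $|\mu_{\tilde h}(q_d^{-n}z)-\mu_{\tilde h}(z)|$, not $|\mu_{\tilde h}(z)|$ itself. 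The paper's construction therefore sets $\mu(z)=\mu_{\tilde h}(q_d^n(z))$ on the pullback annuli $A_n=q_d^{-n}(A_0)$, making $\mu$ locally $q_d$-invariant (hence $\tilde f_\epsilon$ conformal) near $T$ while keeping $|\mu-\mu_{\tilde h}|<\epsilon$. Finally, your claim that expansion follows from ``uniform $C^1$-perturbation'' is unjustified here; the paper instead observes that the resulting $f_\epsilon$ is real-analytic with bounded geometry, and invokes the fact (see \cite{Jiang9} or \cite[Chapter~3]{Jiang2}) that a real-analytic circle endomorphism with bounded geometry is expanding.
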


Our proof of this theorem needs some result for asymptotically conformal circle
endomorphisms in~\cite{GardinerJiangAAACCE}. For the purpose of self-contained of this paper
and for the convenience of the reader, we includes some materials from~\cite{GardinerJiangAAACCE}
in the next section. Therefore, we delay the proof of Theorem~\ref{completion} into the next section.

\section{Asymptotically conformal circle endomorphisms}

Suppose $g$ is a quasiconformal homeomorphism defined on a plane domain $\Omega$.
Let
$$
\mu(z) =\frac{g_{\overline{z}}}{g_{z}}
$$
for $z\in \Omega$ and let
$$
K_z(g)=\frac{1+|\mu (z)|}{1-|\mu(z)|}.
$$
Here $K_{z}(g)$ is called the dilatation of $g$ at $z$.

Suppose $H$ is a quasisymmetric homeomorphism of the real line.
Define the skew quasisymmetric distortion function as
$$
\rho (x,y,k) =\frac{|H(x+ky)-H(x)|}{|H(y)-H(x)|}.
$$
In particular, let $\rho(x,y) =\rho (x,y,1)$.
The Beurling-Ahlfors extension procedure provides a canonical
extension $\tilde{H}$ of any quasisymmetric homeomorphism $H$ to the whole complex plane such
that the Beltrami coefficient $\mu$ of $\tilde{H}$ satisfies
$\|\mu\|_{\infty}<1.$  Furthermore, it satisfies the following
well-known theorem (see~\cite{GardinerSullivanSymm}).

\vspace*{10pt}
\begin{theorem}~\label{oneBAextension} The Beurling-Ahlfors extension of a
quasisymmetric self-mapping $H$ of the real axis has a  Beltrami coefficient $\mu$ with
$|\mu(x+iy)|\leq \eta(y)$ for some vanishing function $\eta(y)$
if, and only if, there is a vanishing function $\epsilon(y)$ such
that
$$
\frac{1}{1+\epsilon(y)} \leq \rho_H(x,y) \leq
1+\epsilon(y).
$$
\end{theorem}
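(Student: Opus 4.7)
The plan is to work directly with the Beurling--Ahlfors extension formula
$$\tilde H(x+iy) = \frac{1}{2}\int_0^1 \bigl[H(x+ty)+H(x-ty)\bigr]\,dt + \frac{i}{2}\int_0^1 \bigl[H(x+ty)-H(x-ty)\bigr]\,dt,$$
differentiate it to obtain $\tilde H_x$ and $\tilde H_y$ as combinations of the boundary values $H(x\pm y)$ and of integral averages $\int_0^1 H(x\pm ty)\,dt$, and then express $|\mu(x+iy)|=|\tilde H_{\bar z}|/|\tilde H_z|$ as an explicit ratio whose numerator and denominator involve only first-order differences of $H$ on $[x-y,x+y]$. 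A short computation shows that $|\mu(x+iy)|$ vanishes precisely when $H$ is affine on $[x-y,x+y]$, so both directions of the equivalence become quantitative versions of the statement that $H$ restricted to $[x-y,x+y]$ is asymptotically affine.

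For the sufficiency (``if'') direction I would substitute the hypothesis $1/(1+\epsilon(y))\leq \rho_H(x,y)\leq 1+\epsilon(y)$ into the explicit formula for $|\mu(x+iy)|$. A dyadic decomposition of the integration variable into ranges $t\in[2^{-k-1},2^{-k}]$ lets me apply the symmetric distortion hypothesis at scale $2^{-k}y$, and summing the dyadic estimates produces a single uniform bound $|\mu(x+iy)|\leq \eta(y)$ with $\eta(y)\to 0$ as $y\to 0^+$. The quasisymmetry of $H$ supplies a uniform lower bound on the denominator of $|\mu|$, which is needed to convert an estimate on the numerator into a bound on the ratio.

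For the necessity (``only if'') direction I would reverse the argument: integrate $\tilde H_z$ and $\tilde H_{\bar z}$ along short horizontal and vertical segments in the upper half plane, use the hypothesis $|\mu|\leq \eta(y)$ to replace the exact formulas by their conformal analogues up to errors controlled by $\eta(y)$, and read off from the induced boundary behavior that $|H(x+y)-H(x)|$ and $|H(x)-H(x-y)|$ differ only by a multiplicative factor $1+o(1)$ as $y\to 0^+$, which is the stated symmetric distortion bound. An equivalent, more geometric route is to use the conformal modulus of the quadrilateral on $\overline{\mathbb H}$ with vertices $x-y,x,x+y,\infty$: a quasiconformal self-map whose dilatation vanishes near the boundary asymptotically preserves this modulus, and the modulus is a monotone function of $\rho_H(x,y)$.

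The main technical obstacle will be the sufficiency direction, where a single uniform vanishing function $\eta(y)$ must be extracted from nonlinear combinations of several integral averages. My plan to handle this is to isolate the ``worst scale'' in the dyadic decomposition and bound the remaining scales by comparison, so that $\eta(y)$ can be taken of the form $C\sup_{0<s\leq y}\epsilon(s)$, with the constant $C$ depending only on the quasisymmetric constant of $H$.
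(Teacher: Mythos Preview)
The paper does not prove this theorem directly---it cites it as well known (Gardiner--Sullivan) and instead proves the two-map generalization (Theorem~\ref{twoBAextension}); specializing that proof to $H_0=\mathrm{id}$ gives the argument for Theorem~\ref{oneBAextension}. Your overall plan is sound and the ``only if'' direction via conformal modulus/extremal length is exactly what the paper does.

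For the ``if'' direction your approach and the paper's diverge in organization. Rather than a dyadic decomposition of the integration variable, the paper computes the four partial derivatives $U_x,U_y,V_x,V_y$ of the Beurling--Ahlfors extension and writes the complex dilatation as $K(z)=(1+ia)/(b-ic)$ with
\[
a=\frac{2(\rho-1)}{\rho+1},\qquad b=\frac{2(\rho+1-\rho_+-\rho\rho_-)}{\rho+1},\qquad c=\frac{\rho-1+\rho_++\rho\rho_-}{\rho+1},
\]
where $\rho=\rho_H(x,y)$ and $\rho_\pm(x,y)=\int_0^1 \rho_H(x,\pm y,k)\,dk$. Since $\rho\to 1$ and $\rho_\pm\to 1/2$ force $(a,b,c)\to(0,1,0)$ and hence $\mu\to 0$, the whole ``if'' direction reduces to controlling $\rho_\pm$ from the hypothesis on $\rho$. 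That step is handled by a separate rescaling lemma (Lemma~\ref{sdc}, via Lemma~\ref{qsdistortion}): if $H$ is $M$-quasisymmetric on $[x-y,x+y]$ then $|\rho_H(x,\pm y,k)-k|\le\vartheta(M)$ with $\vartheta(M)\to 0$ as $M\to 1$. Your dyadic decomposition is doing the same job---transporting control from the basic ratio $\rho_H$ to the skew ratios $\rho_H(\cdot,\cdot,k)$ needed for the integral averages---but the paper's route packages this once in a lemma and then reads the conclusion off from closed algebraic formulas, avoiding any summation over scales. Your version should work, but expect the bookkeeping to be heavier than necessary; the paper's formulas also make the dependence of $\eta$ on $\epsilon$ explicit without invoking a ``worst scale'' argument.
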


A generalization of Theorem~\ref{oneBAextension} can be founded in~\cite{Cui} and in~\cite{GardinerJiangAAACCE} with a complete proof.

\vspace*{10pt}
\begin{theorem}~\label{twoBAextension} Suppose the skew quasisymmetric
distortion functions $\rho_0(x,y,k)$ and $\rho_1(x,y,k)$ of $H_0$
and $H_1$ satisfy the inequality
$$
|\rho_0(x,y,k)-\rho_1(x,y,k)|,\;\; |\rho_0(x,-y,k)-\rho_1(x,-y,
k)| \leq \epsilon(y)
$$
for $x,y>0\in {\mathbb R}$ and $0<k\leq 1$, where $\epsilon(y)$ is
a vanishing function, that is, $\epsilon (y) \to 0$ as $y\to {\mathbb R}$. Suppose furthermore that $\mu_0$ and $\mu_1$
are the Beltrami coefficients of the Beurling-Ahlfors extensions
$\tilde{H}_0$ and $\tilde{H}_1,$ that is,
$$
\mu_0(z)=\frac{\tilde{H_0}_{\overline{z}}}{\tilde{H_0}_z}
{\ \rm \ and \ }
\mu_1(z)=\frac{\tilde{H_1}_{\overline{z}}}{\tilde{H_1}_z}.$$ Then
there is a vanishing function $\eta(y)$ depending only on
$\epsilon(y)$ such that
$$|\mu_0(x+iy)-\mu_1(x+iy)| \leq \eta(y).$$

Conversely, given two quasiconformal maps $\tilde{H}_0$ and
$\tilde{H}_1$ preserving the real axis and a vanishing function
$\eta(y)$ such that
$$
|\mu_{0}(z)-\mu_{1}(z)|\leq\eta(y),
$$
then there is a vanishing function $\epsilon(y)$ such that
$$
|\rho_0(x,y,k)-\rho_1(x,y,k)|,\;\; |\rho_0(x,-y,k)-\rho_1(x,-y,
k)| \leq \epsilon(y)
$$
for $x,y>0\in {\mathbb R}$ and $0<k\leq 1$, where $H_{0}$ and
$H_{1}$ are the restrictions of $\tilde{H}_{0}$ and
$\tilde{H}_{1}$ to the real axis.
\end{theorem}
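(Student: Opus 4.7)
The plan is to reduce both directions of Theorem~\ref{twoBAextension} to explicit formulas for the Beurling--Ahlfors extension combined with a normal-families argument. Recall that
$$
\tilde H(x+iy)=\frac{1}{2}\bigl[u(x,y)+v(x,y)\bigr]+\frac{i}{2}\bigl[u(x,y)-v(x,y)\bigr],
$$
with $u(x,y)=\int_0^1 H(x+ty)\,dt$ and $v(x,y)=\int_0^1 H(x-ty)\,dt$. Straightforward differentiation shows that each of $u_x,u_y,v_x,v_y$ is a linear combination of $H(x+y)-H(x)$, $H(x)-H(x-y)$, and the averages $\int_0^1 H(x\pm ty)\,dt-H(x)$, all divided by an appropriate power of $y$.

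For the forward direction, I would divide both numerator and denominator of $\mu=\tilde H_{\overline z}/\tilde H_z$ by the positive quantity $H(x+y)-H(x)$. After this normalization $\mu(x+iy)$ becomes an explicit rational expression $R$ whose arguments are the skew distortion values $\rho(x,y,k)$ and $\rho(x,-y,k)$ for $0\le k\le 1$ together with the averages $\int_0^1\rho(x,\pm y,k)\,dk$. The uniform quasisymmetry of $H_0$ and $H_1$ traps these arguments in a fixed compact subset of $(0,\infty)$, on which $R$ is Lipschitz. Applying this Lipschitz estimate to the hypothesis $|\rho_0(x,\pm y,k)-\rho_1(x,\pm y,k)|\le\epsilon(y)$, uniformly in $x$ and $k\in(0,1]$, yields the desired bound $|\mu_0(x+iy)-\mu_1(x+iy)|\le\eta(y)$ with $\eta(y)\to 0$ as $y\to 0^+$.

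For the converse, given $|\mu_0-\mu_1|\le\eta(y)$ I would normalize near $x+iy$ by the affine change $z\mapsto(z-x)/y$, mapping the relevant neighborhood to a fixed strip around ${\mathbb R}$ and turning the rescaled extensions into a family of $K$-quasiconformal self-maps whose Beltrami coefficients differ pointwise by at most $\eta(y)$. Post-composing by affine maps to fix the triple $0,1$, compactness of $K$-quasiconformal maps together with the uniqueness theorem for the Beltrami equation (Theorem~\ref{oneBAextension} in its one-variable form) forces the conjugation $H_1\circ H_0^{-1}$, in these rescaled coordinates, to be uniformly close to the identity as $\eta(y)\to 0$. This closeness evaluated at the rescaled point $k\in(0,1]$ is exactly the estimate $|\rho_0(x,\pm y,k)-\rho_1(x,\pm y,k)|\le\epsilon(y)$ after pulling back the normalization. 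The main obstacle in both directions is uniformity in $k$ as $k\to 0^+$ and in $x\in{\mathbb R}$; this is handled by the common quasisymmetric constant of $H_0$ and $H_1$, which yields a compactness independent of the base point, so a single vanishing function controls all pairs $(x,k)$ simultaneously.
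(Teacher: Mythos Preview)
Your forward direction is essentially the paper's own argument, stated abstractly: the paper carries out exactly the computation you describe, writing $\mu$ explicitly as a rational function of $\rho(x,y)=\rho(x,y,1)$, $\rho_+=\int_0^1\rho(x,y,k)\,dk$, and $\rho_-=\int_0^1\rho(x,-y,k)\,dk$, via intermediate real quantities $a,b,c$ with $K(z)=(1+ia)/(b-ic)$. The point you call ``Lipschitz on a compact set'' is made precise in the paper by checking that the denominator $(1+ia_1+b_1-ic_1)(1+ia_0+b_0-ic_0)$ has modulus at least $1$ (because $b>0$), so the difference $\mu_1-\mu_0$ is bounded by a fixed constant times $|a_1-a_0|+|b_1-b_0|+|c_1-c_0|$.

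For the converse you take a genuinely different route. The paper, after observing (as you implicitly do) that $\tilde H_1\circ\tilde H_0^{-1}$ has Beltrami coefficient bounded by a vanishing function of $\operatorname{Im} z$, does \emph{not} argue by normal families. Instead it compares the extremal lengths $\Lambda_j(x,y,k)$ of the curve families separating $[-\infty,H_j(x-y)]$ from $[H_j(x),H_j(x+ky)]$: the Gr\"otzsch inequality bounds $|\log(\Lambda_0/\Lambda_1)|$ by the dilatation of $\tilde H_1\circ\tilde H_0^{-1}$ near $\mathbb R$, and Ahlfors' monotone real-analytic correspondence between $\Lambda$ and the cross-ratio $m$ converts this into the desired bound on $|\rho_0-\rho_1|$. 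Your compactness argument is a legitimate alternative and is standard in the quasisymmetric literature; it has the advantage of avoiding the extremal-length machinery, but it is nonconstructive (the resulting $\epsilon(y)$ comes from a contradiction argument rather than an explicit modulus estimate), and you should be careful to state precisely which three points your rescaled maps fix so that normal-families compactness applies and so that uniform convergence on the compact boundary interval $[-1,1]$ yields the bound for all $k\in(0,1]$ simultaneously.
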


\begin{proof}
We adapt the proof from~\cite{GardinerJiangAAACCE}. We take the following formulas as the definition of the
Beurling-Ahlfors extension:
$$
{\widetilde H} = U+iV,
$$ where
\begin{equation}\label{ba1}
U(x,y)=\frac{1}{2y} \int_{x-y}^{x+y} H(s) ds =
\frac{1}{2}\int_{-1}^1 H(x+ky)dk
\end{equation}
and
\begin{equation}\label{ba2}
V(x,y)=\frac{1}{y} \int_x^{x+y} H(s)ds - \frac{1}{y}
\int_{x-y}^{x} H(s)ds. \end{equation}
 In (\ref{ba1}) and (\ref{ba2}) we have
chosen a normalization slightly different from the one given in
\cite{Ahlfors}.  It has the property that the extension of
the identity is the identity and  the extension is affinely
natural, by which we mean that for affine maps $A$ and $B,$
$$ \widetilde{id_{\mathbb R}} = \ id_{\mathbb C}$$ and
$$\widetilde{A \circ H \circ B} = A \circ {\widetilde H} \circ B.$$

Note that
\begin{equation}\label{1}
\int_0^1 \rho(x,y,k) dk =\frac{1}{H(x)-H(x-y)} \left(\frac{1}{y}\int_x^{x+y}
H(s) ds -H(x) \right)
\end{equation}
and
\begin{equation}\label{2}
\int_0^1 \rho(x,-y,k) dk = \frac{1}{H(x+y)-H(x)}
\left(H(x)-\frac{1}{y}\int_{x-y}^x H(s)ds\right).
\end{equation}

Let

\begin{equation}\label{leftright}
\begin{array}{lll}
  L & = & H(x)-H(x-y)\\
  R & = & H(x+y)-H(x)\\
  L' & = &  H(x) - \frac{1}{y}\int_{x-y}^{x}H(s)ds , \\
 R' & = & \frac{1}{y}\int_{x}^{x+y} H(s) ds - H(x).\\
 \end{array}
 \end{equation}

\noindent and let $\rho_+ (x,y)= \int_0^1 \rho(x,y,k) dk$ and
$\rho_-(x,y)= \int_0^1 \rho(x,-y,k) dk$. Let
$\rho(x,y)=\rho_{H}(x,y)$. Then

\vspace{.1in}

 \begin{equation}\label{avgs}
 \begin{array}{lll}
  \rho(x,y) & = & R/L\\
   \rho_+(x,y) & = & R'/L\\
   \rho_-(x,y) & = & L'/R.
   \end{array}
 \end{equation}

\vspace{.1in}

\noindent Notice that  for symmetric homeomorphisms the quantity
$\rho$ approaches $1$ and the two quantities $\rho_+$ and $\rho_-$
approach $1/2$ as $y$ approaches zero.
 The complex dilatation of $\tilde{H}$ is given by
 $$\mu(z)=\frac{K(z)-1}{K(z)+1}$$ where
 $$K(z)=\frac{{\tilde H}_{z} + {\tilde H}_{\overline{z}}}
 {{\tilde H}_{z} - {\tilde H}_{\overline{z}}}=
 \frac{(U+iV)_z + (U+iV)_{\overline{z}}}{(U+iV)_z - (U+iV)_{\overline{z}}}$$
 $$=\frac{(U+iV)_x -i(U+iV)_y + (U+iV)_x +i(U+iV)_y}{(U+iV)_x -
 i(U+iV)_y-
 (U+iV)_x-i(U+iV)_y}$$
 $$=\frac{U_x+iV_x}{V_y-iU_y}.$$
 Thus
 $$K(z)=\frac{1+ia}{b-ic},$$
where $a=V_x/U_x, b=V_y/U_x {\rm \ and \ } c=U_y/U_x.$

 To find estimates for these three ratios we must find expressions
 for the four partial derivatives of $U$ and $V$ in (\ref{ba1}) and
 (\ref{ba2}). In the notation of (\ref{leftright})

\vspace{.1in}

$\begin{array}{lll}
U_x & = & \frac{1}{2y}(R+L),\\
V_x & = & \frac{1}{y}\left(R-L \right),\\
V_y & = & \frac{1}{y}\left(R+L\right)-\frac{1}{y}\left(R'+L'\right) , \\
U_y & = & \frac{1}{2y}\left(R-L\right) -\frac{1}{2y}\left(R'-L'\right) . \\
\end{array}$

\vspace{.05in}

\noindent Thus

\vspace{.05in}

$\begin{array}{cll}
a(1+\rho)& = & 2\frac{R-L}{R+L}\cdot \frac{R+L}{L},\\

b(1+\rho) & = & 2\frac{R+L-R'-L'}{R+L}\cdot \frac{R+L}{L}=
2 \left(R/L + 1 - R'/L  -(R/L)(L'/R)\right),\\

c(1+\rho) & = & \frac{R-L-R'+L'}{R+L} \cdot \frac{R+L}{L}= R/L -1
-R'/L + (R/L)(L'/R).

\end{array}$

\vspace{.1in}

 \noindent Finally, we obtain
\begin{equation}\label{rho}
\begin{array}{cll}
a& = & \frac{2(\rho -1)}{\rho+1}, \\

b & = & \frac{2(\rho+1-\rho_+ -\rho\rho_-)}{\rho+1},\\

c& = & \frac{\rho -1 + \rho_+ +\rho \rho_-}{\rho+1 } .

\end{array}
\end{equation}

Since $K(z)=(1+ia)/(b-ic),$ $K(z)+1=(1+ia+b-ic)/(b-ic),$ we have
$$\mu_1(z)-\mu_0(z) = \frac{K_1(z)-1}{K_1(z)+1}-\frac{K_0(z)-1}{K_0(z)+1} =
2\frac{K_1(z)-K_0(z)}{(K_1(z)+1)(K_0(z)+1)}=$$
$$2\frac{(1+ia_1)(b_0-ic_0)-(1+ia_0)(b_1-ic_1)}{(1+ia_1+b_1-ic_1)
(1+ia_0+b_0-ic_0)}=$$
\begin{equation}\label{main}
2\frac{(a_1-a_0)(i
b_1+c_1)+(b_0-b_1)(1+ia_1)+(c_1-c_0)(i-a_1)}{(1+ia_1+b_1-ic_1)
(1+ia_0+b_0-ic_0)}. \end{equation}

From the equation for $b$ in (\ref{rho}) and the inequalities
$\rho_+<\rho$ and $\rho \rho_- <1,$ we see that $b>0.$  Since this
inequality is true for $b_1$ and for $b_0,$ it follows that the
denominator in (\ref{main}) is greater than $1.$
 These equations show that
if $a_0, b_0, c_0$ converge to $a_1, b_1, c_1,$ as $y$ approaches
zero, then $\mu_0$ approaches $\mu_1.$  Clearly $\rho_0$
approaches $\rho_1$ implies $a_0$ approaches $a_1$.

From the hypothesis
$$
|\rho_0(x,y,k)-\rho_1(x,y,k)|,\;\; |\rho_0(x,-y,k)-\rho_1(x,-y,k)|
\leq \epsilon (y)
$$
for $x,y>0\in {\mathbb R}$ and $0<k \leq 1$, we have that
$$
|\rho_{1+}(x,y)-\rho_{0+}(x,y)| \leq \int_0^1
|\rho_1(x,y,k)-\rho_0(x,y,k)| dk \leq \epsilon (y)
$$
and
$$
|\rho_{1-}(x,y)-\rho_{0-}(x,y)| \leq \int_0^1
|\rho_1(x,-y,k)-\rho_0(x,-y,k)| dk \leq \epsilon (y).
$$
This implies that $b_0, c_0$ converge to $b_1, c_1$, as $y$
approaches zero. This completes the proof of the first half of the
theorem.

Since the subsequent arguments do not require the second half, we
only sketch the proof.  Notice that if $\tilde{H}_0$ and
$\tilde{H}_1$ are quasiconformal self-maps of the complex plane
preserving the real axis with Beltrami coefficients $\mu_0$ and
$\mu_1$ satisfying
$$
|\mu_0(z)-\mu_1(z)| \leq \epsilon(y)
$$
for a vanishing function $\epsilon(y),$ then the quasiconformal
map $\tilde{H}_1 \circ (\tilde{H}_0)^{-1}$ has Beltrami
coefficient $\sigma$ with
$$
|\sigma(z)|\leq \epsilon'(y)
$$
for another vanishing function $\epsilon'(y).$  Then $\tilde{H}_1
\circ (\tilde{H}_0)^{-1}$ carries the extremal length problem for
the family of curves joining $[-\infty, \tilde{H}_0(x-y)]$ to
$[\tilde{H}_0(x),\tilde{H}_0(x+ky)]$ to the extremal length
problem for the family of curves joining $[-\infty,
\tilde{H}_1(x-y)]$ to $[\tilde{H}_1(x),\tilde{H}_1(x+ky)].$ If
$\Lambda_0(x,y,k)$ and $\Lambda_1(x,y,k)$ are these two extremal
lengths, then by the Gr\"otzsch argument there is another
vanishing function $\epsilon''(y)$ such that
$$\left|\log \frac{\Lambda_0(x,y,k)}{\Lambda_1(x,y,k)} \right| \leq \epsilon''(y).$$

In~\cite{Ahlfors} Ahlfors shows that if
$\Lambda$ is the extremal length of the curve family that joins
the interval $[-\infty,-1]$ to $[0,m],$ $\Lambda$ is an increasing
real analytic function of $m.$ In particular,
\begin{equation}\label{CI}
\left|\log \frac{m_0}{m_1}\right|<\epsilon {\rm \ if \ and \ only
\ if \ } \left|\log
\frac{\Lambda_0}{\Lambda_1}\right|<\epsilon'
\end{equation} and
$\epsilon$ and $\epsilon'$ approach zero simultaneously.

Hence by~(\ref{CI}) there is another vanishing function $\eta(y)$
such that
$$
\left|\frac{H_0(x+ky)-H_0(x)}{H_0(x)-H_0(x-y)}
-\frac{H_1(x+ky)-H_1(x)}{H_1(x)-H_1(x-y)}\right| \leq \eta(y).
$$
Similarly, we have that
$$
\left|\frac{H_0(x)-H_0(x-ky)}{H_0(x+y)-H_0(x)}
-\frac{H_1(x)-H_1(x-ky)}{H_1(x+y)-H_1(x)}\right| \leq \eta(y).
$$
This completes the proof of the second half of the theorem.
\end{proof}

\vspace*{10pt}

\begin{definition}~\label{asymconf}
We call a circle endomorphism $f$ of degree $d\geq 2$ a uniformly asymptotically conformal if
it has a reflection invariant extension $\tilde{f}$ defined in a small
annulus $r<|z|<1/r$ such that
$$
\tilde{f}(1/\overline{z})=1/\overline{\tilde{f}(z)},
$$
and such that for every $\epsilon>0$ there exists a possibly
smaller annulus $U=\{z:r'<|z|<1/r'\}$ such that
\begin{equation}\label{uac1}
K_z(\tilde{f}^{-n})<1+\epsilon
\end{equation}
for almost all $z$ in $U.$ and all $n>0$.
\end{definition}

From the quasiconformal mapping theory (see~\cite{Ahlfors}),
if $\tilde{f}$ is a uniformly asymptotically conformal, then the restriction $f$ of $\tilde{f}$
to the unit circle $T$ is uniformly symmetric.
It is also easy to see that if $\tilde{f}$ acting on a
neighborhood of the unit circle with $\tilde{f}(1)=1$ is uniformly asymptotically conformal if and only if there is a unique
lift $\tilde{F}$ to an infinite strip containing ${\mathbb R}$ and
bounded by lines parallel to ${\mathbb R}$ such that
\begin{itemize}
\item[1)] $\pi \circ \tilde{F} = \tilde{f} \circ \pi,$,
\item[2.)] $\tilde{F}(0)=0$,
\item[3.)] $\tilde{F} (z+1) = \tilde{F}(z) +d$, and
\item[4.)] $\tilde{F}$ preserves the real axis and $\tilde{F}(\overline{z})=\overline{\tilde{F}(z)}$.
\end{itemize}

In light of the above, we have an equivalent definition.

\begin{definition}
We call a circle endomorphism $f$ a uniformly asymptotically conformal if
for every $\epsilon>0$, there is a $\delta>0$ such that if the absolute value of $y = Im \ z$ is
less than $\delta$, then
\begin{equation}\label{unif1}
K_z(\tilde{F}^{-n})<1+\epsilon
\end{equation}
for all $n>0$.
\end{definition}

The following theorem is proved in~\cite{GardinerJiangAAACCE}. For the purpose of self-contained of this paper, we include the proof.

\vspace*{10pt}
\begin{theorem}~\label{uaatouac} A circle endomorphism $f$ of degree $d\geq 2$ is
uniformly symmetric if and only if it is uniformly asymptotically conformal.
\end{theorem}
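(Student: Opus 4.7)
The plan is to use Theorems~\ref{oneBAextension} and~\ref{twoBAextension} as the dictionary between the behavior of the boundary distortion functions on the real axis and the behavior of Beltrami coefficients of quasiconformal extensions on a strip around the real axis. This is natural because the uniformly symmetric condition is a uniform-in-$n$ statement about skew distortion of $F^{-n}|_{\mathbb{R}}$, while uniform asymptotic conformality is a uniform-in-$n$ statement about Beltrami coefficients of $\tilde{F}^{-n}$ near $\mathbb{R}$.

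For the easy direction, assume $f$ is uniformly asymptotically conformal with extension $\tilde{F}$. The definition gives, for every $\epsilon>0$, a strip $\{|\mathrm{Im}\,z|<\delta\}$ on which $K_z(\tilde{F}^{-n})<1+\epsilon$ for all $n$, which is equivalent to a uniform bound $|\mu_{\tilde{F}^{-n}}(x+iy)|\le \eta(y)$ with $\eta$ a vanishing function independent of $n$. Now apply the second half of Theorem~\ref{twoBAextension} with $\tilde{H}_0=\tilde{F}^{-n}$ and $\tilde{H}_1=\mathrm{id}$ (both preserve $\mathbb{R}$); since $|\mu_0-\mu_1|=|\mu_{\tilde{F}^{-n}}|\le\eta(y)$, there is a vanishing function $\epsilon(y)$, independent of $n$, such that
$$
|\rho_{F^{-n}}(x,y,k)-1|,\ |\rho_{F^{-n}}(x,-y,k)-1|\le \epsilon(y)
$$
for $x,y>0$ and $0<k\le 1$. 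Specializing $k=1$ and unwrapping the definition of $\rho$ in terms of difference quotients recovers exactly the uniformly symmetric inequality for $F^{-n}$, uniformly in $n$.

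For the converse, assume $f$ is uniformly symmetric. The first step is to apply Theorem~\ref{oneBAextension} to each $F^{-n}$: since $F^{-n}$ is symmetric with modulus of symmetry $\varepsilon(t)$ that does not depend on $n$, the Beurling--Ahlfors extension of $F^{-n}$ has Beltrami coefficient bounded by a vanishing function $\eta(y)$ independent of $n$. The second and more delicate step is to produce one reflection-invariant extension $\tilde{F}$ whose actual inverse iterates $\tilde{F}^{-n}$ inherit this kind of bound. The proposal is to take $\tilde{F}$ to be the Beurling--Ahlfors extension of $F$ on a strip around $\mathbb{R}$; then the composition rule for Beltrami coefficients gives a pointwise identity
$$
K_z(\tilde{F}^{-n}) \le \prod_{k=0}^{n-1} K_{\tilde{F}^{-k}(z)}(\tilde{F}^{-1}),
$$
and each individual factor is bounded by $1+\eta(|\mathrm{Im}\,\tilde{F}^{-k}(z)|)$. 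To turn this into a uniform-in-$n$ bound close to $1$ it suffices to prove that the imaginary parts $|\mathrm{Im}\,\tilde{F}^{-k}(z)|$ decay geometrically in $k$, so that the series $\sum_k\eta(\tau^k y)$ converges to a vanishing function of $y$.

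The principal obstacle is precisely this contraction statement for the inverse iterates of $\tilde{F}$ in the imaginary direction. Uniformly symmetric circle endomorphisms need not be expanding in the classical smooth sense, so the shrinking of imaginary parts cannot be extracted from a derivative bound. Instead, the argument must rely on the bounded nearby geometry from Theorem~\ref{boundedgeometrythm}: on the real line, the nested intervals $I_{w_n}$ shrink geometrically, and the Beurling--Ahlfors extension roughly transports this geometric contraction into the strip because it is constructed from averages over intervals of comparable length to $|\mathrm{Im}\,z|$. Once that geometric contraction is in hand, combining it with the vanishing function $\eta$ from the first step yields the required uniform bound $K_z(\tilde{F}^{-n})<1+\epsilon$ on a possibly smaller strip, completing the proof.
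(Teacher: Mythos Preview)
Your treatment of the easy direction (uniformly asymptotically conformal $\Rightarrow$ uniformly symmetric) is fine and matches what the paper does: the paper simply defers this to standard quasiconformal mapping theory, and your use of the converse half of Theorem~\ref{twoBAextension} with $\tilde{H}_0=\tilde{F}^{-n}$, $\tilde{H}_1=\mathrm{id}$ makes that explicit.

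The hard direction, however, has a genuine gap, and the paper's argument avoids exactly the difficulty you run into. Your plan is to take $\tilde{F}$ to be the Beurling--Ahlfors extension of $F$ and bound
\[
K_z(\tilde{F}^{-n})\le \prod_{k=0}^{n-1} K_{\tilde{F}^{-k}(z)}(\tilde{F}^{-1})\le \prod_{k=0}^{n-1}\bigl(1+\eta(|\mathrm{Im}\,\tilde{F}^{-k}(z)|)\bigr).
\]
Even granting geometric decay $|\mathrm{Im}\,\tilde{F}^{-k}(z)|\le \tau^k y$, this requires $\sum_k \eta(\tau^k y)$ to be finite \emph{and} to vanish as $y\to 0$. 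That is a Dini-type summability condition on $\eta$, not a consequence of $\eta$ being merely a vanishing function; for instance $\eta(t)=1/|\log t|$ gives a divergent sum. So the product argument cannot close without an unjustified extra hypothesis on the modulus of symmetry. The geometric contraction of imaginary parts under the Beurling--Ahlfors extension of $F$ is also not established: bounded nearby geometry of the partitions on $\mathbb{R}$ controls ratios of interval lengths, but it does not by itself yield a uniform contraction constant for $\tilde{F}^{-1}$ in the transverse direction.

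The paper sidesteps both issues by choosing a different extension. Using Corollary~\ref{quasisymmetriccojugacy} one writes $F=H\circ P\circ H^{-1}$ with $P(x)=dx$ and $H$ quasisymmetric, takes $\tilde{H}$ to be the Beurling--Ahlfors extension of $H$, and \emph{defines} $\tilde{F}=\tilde{H}\circ P\circ \tilde{H}^{-1}$. Then $\tilde{F}^{-n}=\tilde{H}\circ P^{-n}\circ \tilde{H}^{-1}$, and since $P^{-n}$ is conformal the Beltrami coefficient of $\tilde{F}^{-n}$ at $\tilde{H}(z)$ is controlled by $|\mu_{\tilde{H}}(d^{-n}z)-\mu_{\tilde{H}}(z)|$. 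This is a \emph{single} difference, not a product of $n$ factors, so no summability is needed. To bound that difference one observes that, by affine naturality, $\widetilde{F^{-n}\circ H}=\tilde{H}\circ P^{-n}$; since $F^{-n}$ is uniformly $(1+\varepsilon(t))$-quasisymmetric, Lemma~\ref{sdc} gives that the skew distortions of $F^{-n}\circ H$ and of $H$ differ by a vanishing function of $y$, and then Theorem~\ref{twoBAextension} converts this into $|\mu_{\tilde{H}}(d^{-n}z)-\mu_{\tilde{H}}(z)|\le\eta(y)$ uniformly in $n$. The missing idea in your proposal is therefore to build the extension through the conjugacy to the linear model rather than by extending $F$ directly.
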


Before to prove this theorem, we prove the following lemma. Let $\zeta(M)$ be
the function in Lemma~\ref{qsdistortion}.

\vspace{10pt}
\begin{lemma}~\label{sdc}
Let $\vartheta (M) =M-1 + M\zeta (M)$. Then for any homeomorphism
$H$ of ${\mathbb R}$ and any $x, y>0\in {\mathbb R}$, if $H$
restricted to the interval $[x-y,x+y]$ is $M$-quasisymmetric, then
$$
\max \left\{|\rho_{H}(x,y, k)-k|,  |\rho_{H}(x,-y, k)-k|
\right\}\leq \vartheta (M),\quad \forall\; 0<k\leq 1.
$$
\end{lemma}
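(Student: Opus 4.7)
The plan is to decompose $\rho_H(x,y,k)$ as a product that separates the $k$-dependence from the qua\-sis\-ym\-met\-ric distortion at the fixed scale $y$, and then apply Lemma~\ref{qsdistortion} to each factor after an appropriate affine normalization. Concretely, write
$$
\rho_H(x,y,k) = \rho_H(x,y,1)\cdot s, \qquad s := \frac{H(x+ky)-H(x)}{H(x+y)-H(x)},
$$
and let $\rho := \rho_H(x,y,1) = \dfrac{H(x+y)-H(x)}{H(x)-H(x-y)}$. Since $H$ is $M$-quasisymmetric on $[x-y,x+y]$, one has $\rho \in [1/M, M]$, hence $|\rho - 1|\le M-1$.

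Next I would normalize $H$ restricted to $[x,x+y]$: let
$$
\tilde Q(t) = \frac{H(x+ty) - H(x)}{H(x+y)-H(x)}, \qquad t\in [0,1].
$$
Then $\tilde Q$ is an $M$-quasisymmetric self-homeomorphism of $[0,1]$ with $\tilde Q(0)=0$, $\tilde Q(1)=1$, so Lemma~\ref{qsdistortion} applies and gives $|\tilde Q(t)-t|\le \zeta(M)$ for $t\in[0,1]$. Taking $t=k$ yields $|s-k|\le \zeta(M)$.

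Now combine the two estimates via the algebraic identity
$$
\rho s - k = \rho(s-k) + k(\rho-1),
$$
which, using $k\le 1$ and $\rho \le M$, gives
$$
|\rho_H(x,y,k) - k| \le \rho|s-k| + k|\rho-1| \le M\zeta(M) + (M-1) = \vartheta(M).
$$
The bound for $|\rho_H(x,-y,k)-k|$ is entirely symmetric: replacing $y$ by $-y$ swaps the roles of $R$ and $L$, so $\rho_H(x,-y,1) = 1/\rho \in [1/M, M]$, and the corresponding normalized map $\tilde Q'(t) = (H(x)-H(x-ty))/(H(x)-H(x-y))$ is $M$-quasisymmetric from $[0,1]$ to $[0,1]$ fixing the endpoints, so Lemma~\ref{qsdistortion} gives the analogous factor bound, and the same identity yields the conclusion.

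The only genuinely delicate point is verifying that the one-variable Lemma~\ref{qsdistortion} can be applied cleanly after affine normalization: one must check that the constant $M$ is preserved by the rescaling and that the definition of $\rho_H$ (which is quotient-invariant under affine pre/post-composition) matches the normalized quantities. Once these identifications are made, the proof is a short combination of Lemma~\ref{qsdistortion} with the triangle inequality and the algebraic identity above, and in particular explains why the sharp constant takes the form $\vartheta(M)=M-1+M\zeta(M)$: the summand $M-1$ comes from the residual distortion at scale $y$, while $M\zeta(M)$ comes from the normalized distortion at the sub-scale $k$.
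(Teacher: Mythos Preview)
Your proof is correct and follows essentially the same approach as the paper: the paper's $\hat{H}(k)=(H(x+ky)-H(x))/(H(x+y)-H(x))$ is exactly your $\tilde Q$, and after applying Lemma~\ref{qsdistortion} to it the paper multiplies through by $\rho_H(x,y)$ and bounds as you do. Your explicit algebraic identity $\rho s-k=\rho(s-k)+k(\rho-1)$ makes the origin of the two summands in $\vartheta(M)$ slightly more transparent, but the argument is the same.
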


\begin{proof}
Consider $\hat{H}(k)= (H(x+ky) -H(x))/(H(x+y)-H(x))$. Then
$\hat{H}(1)=1$ and  $\hat{H}(0)=0.$ Also, $\hat{H}$ is
quasisymmetric because
$$\frac{\hat{H}(k+j)-\hat{H}(k)}{\hat{H}(k)-\hat{H}(k-j)}=
\frac{{H}(x+ky+jh)-{H}(x+ky)}{{H}(x+ky)-{H}(x+ky-jy)}
$$
for any $0\leq k\leq 1$ and $j>0$ such that $[k-j,k+j]\subset
[0,1]$ and this is bounded above by $M$ and below by $1/M$ because
$H$ is $M$-quasisymmetric. So, from Lemma~\ref{qsdistortion},
$$
k-\zeta(M)
 \leq \frac{H(x+ky) -H(x)}{H(x+y)-H(x)}
 \leq k+\zeta(M).
$$
Thus
$$
(k-\zeta(M))\rho_{H} (x,y) \leq \frac{H(x+ky) -H(x)}{H(x)-H(x-y)}
\leq (k+\zeta (M))\rho_{H}(x,y).
$$
Since $ 1/M \leq \rho_{H} (x,y) \leq M $ and we are assuming that
$0<k \leq 1,$ this implies that
$$
 |\rho_{H} (x,y,k)-k| \leq  \vartheta (M)=M-1+M\zeta (M).
$$
Similarly, we have that
$$
 |\rho_{H} (x,-y,k)-k| \leq  \vartheta (M)=M-1+M\zeta (M).
$$
\end{proof}

\begin{proof}[Proof of Theorem~\ref{uaatouac}]
We only need to prove the "only if" part. Let $F$ be the lift to the real axis of
$f$ such that $F(0)=0,$ $F (x+1) = F(x)+d$ and such that
$\pi \circ F = f \circ \pi.$  By Theorem~\ref{quasisymmetriccojugacy}, there
is a quasisymmetric homeomorphism $H$ of ${\mathbb R}$ fixing $0$
and $1$ such that
\begin{itemize}
\item[i)] $H \circ P \circ H^{-1} = F$ where $P(x)=dx,$  and
\item[ii)] $H \circ T \circ H^{-1} = T$  where $T(x)=x+1.$
\end{itemize}
It will suffice to find an extension
$\tilde{F}$ of $F$ such that
\begin{enumerate}
\item $\tilde{F} \circ T (z) = T^d \circ \tilde{F}(z)$ and
\item the Beltrami coefficients $\mu_{\tilde{F}^{-n}}$ of $\tilde{F}^{-n}$
satisfy
$$
|\mu_{\tilde{F}^{-n}}(x+iy)|\leq \epsilon(y)
$$
where $\epsilon(y)$ is independent of $n$ and $x.$
\end{enumerate}

Let $\tilde{H}$ be a reflection invariant quasiconformal extension of $H$.
We define
$$
\tilde{F} =\tilde{H} \circ P \circ\tilde{H}^{-1}
$$
since $\tilde{H}$ extends $H,$ clearly $\tilde{F}$ extends $F$ and is a reflection invariant extension.

Suppose $\rho_0(x,y,k)$ and $\rho_1(x,y,k)$ are the skew
quasisymmetric distortions of $F^{-n} \circ H$ and $H.$  By
Lemma~\ref{sdc}, there is a vanishing function $\epsilon(y)$
such that
$$
|\rho_0(x,y,k)-\rho_1(x,y,k)|,\;\; |\rho_0(x,-y, k)-\rho_1(x,-y,
k)| \leq \epsilon(y)
$$
for all real numbers $x,$ all $y>0,$ all $k$ with $0<k\leq 1$ and
all $n\geq 1$. Applying Theorem~\ref{twoBAextension}, there is another
vanishing function $\eta(y)$ such that the Beltrami coefficients
$\mu_{\widetilde{F^{-n} \circ H}}$ and $\mu_{\tilde{H}}$ satisfy
$$
|\mu_{\widetilde{F^{-n} \circ H}}(z)- \mu_{\tilde{H}}(z)| \leq
\eta(y), \quad \forall\; n>0.
$$

Since  $$\widetilde{F^{-n} \circ H}
    = \widetilde{H \circ P^{-n}} = \tilde{H} \circ P^{-n},$$
    we conclude that $$|\mu_{\tilde{H}}(d^{-n}z)-\mu_{\tilde{H}}(z)| \leq
    \eta(y).$$
    Also, since the Beurling-Ahlfors extension is affinely natural,
    $\mu_{\tilde{H}}(T(z))=\mu_{\tilde{H}}(z)$
    and $\tilde{H} \circ T \circ \tilde{H}^{-1}(z) = T(z).$ We
    conclude that $\tilde{F}=\tilde{H} \circ P \circ
    \tilde{H}^{-1}$ is uniformly asymptotically
    conformal.
\end{proof}

\begin{proof}[Proof of Theorem~\ref{completion}]
Suppose $\{ \kappa_n\}_{n=1}^{\infty} =\{ [(f_n,
h_n)]\}_{n=1}^{\infty}$ is a Cauchy sequence in ${\mathcal
T}{\mathcal U}{\mathcal S}$. Then
$$
d_{\mathcal T}({\mathcal S} h_n,{\mathcal S}h_m) \to 0 \qquad
\hbox{as} \qquad m,n\to \infty.
$$
We may assume by working modulo ${\mathcal S}$ that
$h_n^{-1}h_{m}$ tends to the identity map as $m$ and $n$ go to
infinity. Therefore, $\{ h_n\}_{n=1}^{\infty}$ is a Cauchy
sequence in the universal Teichm\"uller space and $h_n$ tends to a
quasisymmetric map $h$ as $n$ goes to infinity.

Since $f_{n} = h_{n}^{-1}q_{d}h_{n}$ for all $n\geq 1$, $f_{n}=
h^{-1}_{n}h_m f_{m} h^{-1}_{m}h_{n}$ for all $n,m\geq 1$. Let
$g_{n,w_{k}}$ be inverse branches of $f^{k}_n$ defined on
$T\setminus \{1\}$. By considering their lifts to ${\mathbb R}$,
we can think of them as defined on the whole circle $T$. Then
$$
g_{n,w_{k}} = h^{-1}_{n}h_m g_{m,w_{k}} h^{-1}_{m}h_{n}.
$$
Let $k_{nm} = h_{n}^{-1}h_{m}$ and let
$$
\rho_{nm} =\sup_{x\in T, t>0}
\frac{|k_{nm}(x+t)-k_{nm}(x)|}{|k_{nm}(x)-k_{nm}(x-t)|}
$$
be its quasisymmetric distortion. Then $\rho_{nm}\to 1$ as $n,m\to
\infty$. Let
$$
\rho (g_{n, w_{k}}, t)=\sup_{x\in T} \frac{|g_{n,
w_{k}}(x+t)-g_{n, w_{k}}(x)|}{|g_{n, w_{k}}(x)-g_{n,
w_{k}}(x-t)|}, \quad t>0,
$$
be the quasisymmetric distortion of $g_{n, w_{k}}$. Then we have
that
$$
\rho (g_{n, w_{k}}, t) \leq \rho_{nm}^{2} \rho (g_{m, w_{k}},
t),\quad \forall\; n,m \geq 1.
$$
So there is a positive bounded function $\epsilon (t)\to 1$ as
$t\to 0$ such that
$$
\rho(g_{n, w_{k}}, t)\leq \epsilon (t), \quad \forall\; n\geq
1,\;\; \forall \; w_{k}, \;\; \forall \; t>0.
$$

Define $f=h^{-1}q_{d}h$. Let $g_{w_{k}}$ be inverse branches of
$f^{k}$ defined on $T\setminus \{1\}$. By considering their lifts
to ${\mathbb R}$, we think of them as defined on the whole circle
$T$. Let
$$
\rho(g_{w_{k}}, t) =\sup_{x\in T}
\frac{|g_{w_{k}}(x+t)-g_{w_{k}}(x)|}{|g_{w_{k}}(x)-g_{w_{k}}(x-t)|},
\quad t>0,
$$
be the quasisymmetric distortion of $g_{w_{k}}$.

Let $l_n=h^{-1}h_n$. Then $f=l_nf_nl_n^{-1}$ for all $n>0$. Let
$$
\rho(l_n) =\sup_{x\in T, t>0}
\frac{|l_{n}(x+t)-l_{n}(x)|}{|l_{n}(x)-l_{n}(x-t)|}
$$
be the quasisymmetric constant of $l_n$. Then $\{
\rho(l_n)\}_{n=1}^{\infty}$ is a bounded sequence. (Actually,
$\rho_{n}\to 1$ as $n\to \infty$.)

Since $g_{w_{k}} =l_n g_{n,w_{k}}l_n^{-1}$,
$$
\rho(g_{w_{k}}, t)\leq (\rho(l_n))^2 \epsilon(t) \leq \sup_{n\geq
1} \{(\rho(l_n))^2\} \epsilon(t), \quad \forall\; w_{k},\;\;
\forall \;n>0, \;\;\forall \; t>0.
$$
This means that $f$ is uniformly symmetric, so $[(f, h)]\in
{\mathcal T}{\mathcal U}{\mathcal S}$. But $f_{n}\to f$ as $n\to
\infty$ in ${\mathcal T}{\mathcal U}{\mathcal S}$. Thus ${\mathcal
T}{\mathcal U}{\mathcal S}$ is complete.

For any $[(f,h)]\in {\mathcal T}{\mathcal U}{\mathcal S}$ and any
$\epsilon>0$, we will prove that there is an analytic circle map
$f_{\epsilon}$ in ${\mathcal C}^{1+\alpha}$ such that
$[(f_{\epsilon}, h_{\epsilon})]$ is in the $\epsilon$-neighborhood
of $[(f,h)]$ in ${\mathcal T}{\mathcal U}{\mathcal S}$. We use a
technique in complex dynamics (refer to~\cite{DouadyHubbard}) to
construct $f_{\epsilon}$ as follows.

Consider a quasiconformal extension $\tilde{h}$ of $h$ to the
complex plane. Then $\tilde{f}=\tilde{h}q_{d}\tilde{h}^{-1}$ is a
quasiregular map of the complex plane. Let
$$
\mu_{\tilde{f}^n}(z)
=\frac{(\tilde{f}^n)_{\overline{z}}(z)}{(\tilde{f}^n)_{z}(z)}
$$
be the Beltrami coefficient of $\tilde{f}^n$. Assume
$\mu_{\tilde{f}^n}(z)$ is symmetric about the unit circle, that
is, $\mu_{\tilde{f}^n}(z)=\mu_{\tilde{f}^n}(1/\overline{z})$.

Since $f$ is uniformly symmetric, from Theorem~\ref{uaatouac},
we can pick an extension
$\tilde{f}$ (equivalently, pick an extension $\tilde{h}$ of the
conjugacy $h$) such that there is a function $\gamma(t)\to 0$ as
$t\to 0$ and such that $|\mu_{\tilde{f}^n} (z)| \leq
\gamma(|z|^{2n}-1)$ for all $n>0$ and a.e.  $z$. From calculus,
$$
\mu_{\tilde{f}^n}(z) =\frac{\mu_{\tilde{h}}(q_{d}^n(z))
-\mu_{\tilde{h}}(z)}{1+ \mu_{\tilde{h}}(q_{d}^n(z))
\overline{\mu_{\tilde{h}}(z)}}\Theta (z), \quad \hbox{where
$|\Theta (z)|=1$}.
$$
This implies that
$$
|\mu_{\tilde{h}}(q_{d}^n(z)) -\mu_{\tilde{h}}(z)| \leq C \gamma
(|z|^{2^n}-1)
$$
for all $n>0$ and a.e. $z$ where $C>0$ is a constant. For any
$\epsilon>0$, we have a $\delta >0$ such that
$\gamma(t)<\epsilon/C$ for all $0\leq t<\delta$. Let
$$
A_0=\{ z\in {\mathbb C}\; |\; 1-\delta <|z|<(1-\delta)^{1/2}
\}\cup
      \{ z\in {\mathbb C}\; |\; (1+\delta)^{1/2} <|z|<1+\delta \}
$$
and set $A_n=q_{d}^{-n}(A_0)$. Define $\mu (z) =
\mu_{\tilde{h}}(z)$ for $z\in {\mathbb C}\setminus
(\cup_{n=1}^{\infty} A_n)$ and $\mu =\mu_{\tilde{h}}(q_{d}^n(z))$
for $z\in A_n$ and $n>0$. Then $\mu$ is a Beltrami coefficient
defined on the complex plane and symmetric about the unit circle.
Let $\varphi$ be a quasiconformal homeomorphism solving the
Beltrami equation $\varphi_{\overline{z}} =\mu (z)\varphi_{z}$.
Then $\phi |T$ is a homeomorphism of $T$. Define
$$
\tilde{f}_{\epsilon} =\varphi q_{d} \varphi^{-1}.
$$
From calculus,
$$
\mu_{\tilde{f}_{\epsilon}}(z) =\frac{\mu(q_{d}(z)) -\mu(z)}{1+
\mu(q_{d}(z))\overline{\mu(z)}}\Theta (z), \quad \hbox{where
$|\Theta (z)|=1$}.
$$
So $(\tilde{f}_{\epsilon})_{\overline{z}} =0$ for
$(1-\delta)^{1/2}< |z|< (1+\delta)^{1/2}$, that is,
$f_{\epsilon}=\tilde{f}_{\epsilon}|T$ is analytic. Because
$|\mu(z) -\mu_{\tilde{f}} (z)|<\epsilon$ for all $z\in {\mathbb
C}$, $f_{\epsilon}$ is $\epsilon$-approximate to $f$ in the metric
$d_{\mathcal T} (\cdot, \cdot)$.

The sequence of Markov partitions
$\{\varpi_{n,f}\}_{n=0}^{\infty}$ is just an image of the sequence
of Markov partions $\{ \varpi_{n,q_{d}}\}_{n=0}^{\infty}$ under
$\varphi|T$. Since $\varphi|T$ is quasisymmetric and $\{
\eta_{n,q_{d}}\}_{n=0}^{\infty}$ has bounded geometry, then
$\{\varpi_{n,f}\}_{n=0}^{\infty}$ has bounded geometry. A real
analytic circle endomorphism having bounded geometry is expanding
(refer to~\cite[Chapter 3]{Jiang2} or~\cite{Jiang9}). Thus $f_{\epsilon} \in
{\mathcal C}^{1+}$. This completes the proof.
\end{proof}

\section{Contractibility}

By definition, a topological space $X$ is contractible if there is a continuous map
$$
\Psi(x,t) : X\times [0,1]\to X
$$
such that $\Psi(x,0)=x$ and $\Psi(x,1)=x_{0}$ for all $x\in X$ where $x_{0}$ is a fixed point in $X$.
In this section, we prove the following theorem.

\vspace*{10pt}
\begin{theorem}~\label{contractibility}
The space ${\mathcal T}{\mathcal U}{\mathcal S}$ is contractible.
\end{theorem}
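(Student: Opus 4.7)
The plan is to construct a contraction $\Psi:{\mathcal T}{\mathcal U}{\mathcal S}\times[0,1]\to {\mathcal T}{\mathcal U}{\mathcal S}$ with $\Psi(\Pi,0)=\Pi$ and $\Psi(\Pi,1)=[(q_d,\mathrm{id})]$ by linearly scaling the Beltrami coefficients of canonical quasiconformal extensions of the marking homeomorphisms. This is the analogue, for ${\mathcal T}{\mathcal U}{\mathcal S}$, of the standard scaling contraction of the universal Teichm\"uller space $\mathcal{QS}{\rm \ mod\ }{\mathcal S}$; the new content will be to verify that the deformation preserves the uniformly symmetric subclass and descends from markings to $\sim_T$-equivalence classes.

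Given $\Pi=[(f,h_f)]\in {\mathcal T}{\mathcal U}{\mathcal S}$, I would lift $h_f$ to $H_f:\mathbb{R}\to\mathbb{R}$ and let $\tilde H_f$ be its Beurling-Ahlfors extension, with Beltrami coefficient $\mu=\mu_{\tilde H_f}$. By Theorem~\ref{uaatouac} and the chain of implications in its proof (using the affine naturality $\widetilde{F^{-n}\circ H_f}=\tilde H_f\circ P^{-n}$ with $P(x)=dx$, combined with Theorem~\ref{twoBAextension}), the uniform symmetry of $f$ yields a vanishing function $\eta(y)\to 0$ as $y\to 0$ with
$$
|\mu(d^{-n}z)-\mu(z)|\leq \eta(y)
$$
for all $n\geq 1$ and almost every $z$, where $y=\mathrm{Im}(z)$. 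For $s\in[0,1]$, set $\mu_s=(1-s)\mu$. Since $\mu_s$ is reflection-invariant about $\mathbb{R}$ and $\mathbb{Z}$-periodic with $\|\mu_s\|_\infty\leq \|\mu\|_\infty<1$, the measurable Riemann mapping theorem produces a unique normalized quasiconformal solution $\tilde H_s$ with $\mu_{\tilde H_s}=\mu_s$ and $\tilde H_s(0)=0$, $\tilde H_s(1)=1$, $\tilde H_s(\infty)=\infty$; by uniqueness and the symmetries of $\mu_s$, this $\tilde H_s$ preserves $\mathbb{R}$ and satisfies $\tilde H_s(z+1)=\tilde H_s(z)+1$. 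Set $H_s=\tilde H_s|_\mathbb{R}$, $F_s=H_s\circ P\circ H_s^{-1}$, project to $T$ to obtain $h_s$ and $f_s=h_s\circ q_d\circ h_s^{-1}$, and define $\Psi(\Pi,s)=[(f_s,h_s)]$. By construction, $\Psi(\Pi,0)=\Pi$ and $\Psi(\Pi,1)=[(q_d,\mathrm{id})]$.

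The first target check is that $\Psi(\Pi,s)\in {\mathcal T}{\mathcal U}{\mathcal S}$. Since
$$
|\mu_s(d^{-n}z)-\mu_s(z)|=(1-s)|\mu(d^{-n}z)-\mu(z)|\leq \eta(y),
$$
the very argument used in the proof of Theorem~\ref{uaatouac}, applied with $\tilde H_s$ in place of $\tilde H_f$, shows that $\tilde F_s=\tilde H_s\circ P\circ \tilde H_s^{-1}$ is uniformly asymptotically conformal, so by Theorem~\ref{uaatouac} the boundary map $f_s$ is uniformly symmetric. Continuity of $\Psi$ in both $s$ and $\Pi$ then follows from the Ahlfors-Bers continuous dependence of normalized solutions of the Beltrami equation on the coefficient.

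The main obstacle is descending $\Psi$ to $\sim_T$-equivalence classes. If $(g,h_g)$ is another representative of $\Pi$, with $h_g=\sigma^{-1}\circ h_f$ for some $\sigma\in{\mathcal S}$, the Beurling-Ahlfors extensions $\tilde H_f$ and $\tilde H_g$ differ (Beurling-Ahlfors being only affinely, not fully, natural), and the scaled families produce boundary markings $h_{f,s}$ and $h_{g,s}$; one must show that $h_{f,s}\circ h_{g,s}^{-1}\in{\mathcal S}$ for every $s\in[0,1]$. The key input is Theorem~\ref{oneBAextension}: a symmetric $\sigma$ has a Beurling-Ahlfors extension whose Beltrami coefficient satisfies $|\mu_{\tilde\sigma}(x+iy)|\to 0$ as $y\to 0$. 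This asymptotic flatness of $\mu_{\tilde H_g}-\mu_{\tilde\sigma^{-1}\tilde H_f}$ near $\mathbb{R}$ is preserved by the scaling factor $(1-s)$, and by the converse direction of Theorem~\ref{oneBAextension} (equivalently, Theorem~\ref{twoBAextension}) translates back to the symmetry of $h_{f,s}\circ h_{g,s}^{-1}$. Verifying this carefully is the essential technical step, and it is the place where the characterization of symmetric homeomorphisms via asymptotic conformality of the Beurling-Ahlfors extension genuinely enters.
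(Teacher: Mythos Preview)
Your approach is essentially the same as the paper's: take a quasiconformal extension of the marking (the paper also uses the one coming from Theorem~\ref{uaatouac}, which is built from the Beurling--Ahlfors extension), linearly scale its Beltrami coefficient $\mu$ by a real parameter, observe that the condition $|\mu(d^{-n}z)-\mu(z)|\le\eta(y)$ is preserved under scaling so that the deformed endomorphism remains uniformly asymptotically conformal (hence uniformly symmetric by Theorem~\ref{uaatouac}), and invoke Ahlfors--Bers for continuity. The paper writes this on the annulus model around $T$ rather than on the strip around $\mathbb R$, but that is cosmetic.

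Where you go beyond the paper is in flagging the descent to $\sim_T$-equivalence classes; the paper simply picks a representative and does not comment on this. Your plan is on the right track, but the formulation is slightly off: the relevant statement is not about $\mu_{\tilde H_g}-\mu_{\tilde\sigma^{-1}\tilde H_f}$, but directly about $\mu_{\tilde H_f}-\mu_{\tilde H_g}$. Concretely, if $H_f\circ H_g^{-1}$ is symmetric, then by Lemma~\ref{sdc} applied to this symmetric map the skew distortion functions of $H_f$ and $H_g$ differ by a vanishing function of $y$; Theorem~\ref{twoBAextension} then gives $|\mu_{\tilde H_f}(z)-\mu_{\tilde H_g}(z)|\le\eta(y)$ for the Beurling--Ahlfors coefficients. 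After scaling by $(1-s)$ this bound persists, and the composition formula shows that the Beltrami coefficient of $\tilde H_{f,s}\circ\tilde H_{g,s}^{-1}$ vanishes at the boundary, so its boundary value is symmetric. With that correction your argument closes the gap the paper leaves open.
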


\begin{proof}
Let $x_{0}=[(q_{d}, id)]$ be the basepoint of ${\mathcal T}{\mathcal U}{\mathcal S}$. For any $x\in {\mathcal T}{\mathcal U}{\mathcal S}$, let $(f, h_{f})$ be a representation in $x$. From Theorem~\ref{uaatouac}, $(f, h_{f})$ has an extension $(\tilde{f}, \tilde{h}_{f})$ over an annulus neighborhood $\{z\;|\; 1/r<|z|<r\}$ for some $r>1$ such that $\tilde{f}$ is symmetric about $T$ and such that
$$
|\mu_{\tilde{f}^{-n}} (z)| \leq \eta(y), \quad z=x+yi
$$
for a vanishing function $\eta(y)$, where $\tilde{f}^{-n}$ means any inverse branch of $\tilde{f}^{n}$. Since
$$
\tilde{f}^{n} =\tilde{h}_{f}\circ q_{d}^{n}\circ \tilde{h}_{f}^{-1},
$$
we have that
$$
\mu_{\tilde{f}^{-n}} (z) =\theta(z) \frac{\mu_{\tilde{h}_{f}} (q_{d}^{-n}(z)) -\mu_{\tilde{h}_{f}}(z)}{1-\overline{\mu_{\tilde{h}_{f}} (q_{d}^{-n}(z))}\mu_{\tilde{h}_{f}}(z)}
$$
where $|\theta (z)|=1$ and $\|\mu_{\tilde{h}_{f}}\|_{\infty} \leq k<1$. (Again $\tilde{f}^{-n}$ means any inverse branch of $\tilde{f}^{n}$ and $q^{-n}$ means the corresponding inverse branch of $q^{n}$.)
This implies that
$$
|\mu_{\tilde{h}_{f}} (q_{d}^{-n}(z)) -\mu_{\tilde{h}_{f}}(z)| \leq \tilde{\eta}(y), \quad z=x+iy
$$
for a vanishing function $\tilde{\eta}(y)$.

Let $\mu=\mu_{\tilde{h}_{f}}$ and $h^{t\mu}$ be the unique solution of the Beltrami equation with Beltrami coefficient $t\mu$ for $0\leq t\leq 1$.
From the measurable Riemann mapping theorem, we know that $h^{t\mu}$ depends on $t$ and $\mu$ continuously. (Actually, if we consider $t$ as a complex parameter, then $h^{t\mu}$ depends on $t$ and $\mu$ holomorphically.)
Then $\tilde{f}_{t} = h^{t\mu}\circ q_{d}\circ (h^{t\mu})^{-1}$ is a continuous family of circle endomorphisms such that
$$
\mu_{\tilde{f}_{t}^{-n}} (z) =\theta_{t}(z) \frac{t(\mu_{\tilde{h}_{f}} (q_{d}^{-n}(z)) -\mu_{\tilde{h}_{f}}(z))}{1-t^{2}\overline{\mu_{\tilde{h}_{f}} (q_{d}^{-n}(z))}\mu_{\tilde{h}_{f}}(z)} \leq \hat{\eta} (y), \quad z=x+iy
$$
for a vanishing function $\hat{\eta} (y)$. Thus $(\tilde{f}_{t}, h^{t\mu})$ is a continuous family of uniformly asymptotically conformal maps.

Let $f_{t}=\tilde{f}_{t}|T$ and $h_{t}=h^{t\mu}|T$. Then $(f_{t}, h_{t})$ is a continuous family of marked uniformly symmetric circle endomorphism.
Thus $\tau_{t} = [(f_{t}, h_{t})]$ is a continuous path in ${\mathcal T}{\mathcal U}{\mathcal S}$ connecting $\tau$ and the basepoint $[q_{d}]$.

Define
$$
\Psi (\tau, t) = \tau_{t}: {\mathcal T}{\mathcal U}{\mathcal S}\times [0,1]\to {\mathcal T}{\mathcal U}{\mathcal S}
$$
is a continuous homotopy map moving every point to the basepoint. So ${\mathcal T}{\mathcal U}{\mathcal S}$ is contractible.
\end{proof}

\vspace*{5pt}
\begin{remark}~\label{ContractibilityHolder}
For any fixed $0<\alpha \leq 1$, let ${\mathcal C}^{1+\alpha}$ be the space of all $C^{1+\alpha}$ circle expanding endomorphisms.
Then ${\mathcal C}^{1+\alpha}$ is also a contractible space. It is a fact communicated to me by Anatole Katok. The proof can be as follows.
Let $f$ be a $C^{1+\alpha}$ circle expanding endomorphism. There is a $C^{1+\alpha}$ circle diffeomorphism $h$ such that
$g=h^{-1}\circ f\circ h$ preserves the Lebesgue measure (see, for example,~\cite{Jiang8}). The derivative $h'$ is the unique fixed point of the positive transfer operator (or called Ruelle's Perron-Fr\"obius operator)
$$
{\mathcal L} \phi (z) = \sum_{w\in f^{-1}(z)} \frac{\phi (w)}{f'(w)}
$$
from the space of all $\alpha$-H\"older continuous functions into itself. Since the fixed point can be obtained from the contracting fixed point theorem (see, for example,~\cite{Jiang10}), the fixed point depends on $f$ continuously. Let $H$ be the corresponding map for $h$ on the real line. Define
$$
H_{t}' (x) = tH'(x) +(1-t), \quad 0\leq t\leq 1.
$$
Then we have that $H_{t}'(x)$ is an $\alpha$-H\"older continuous positive periodic function of period $1$ and that
$H_{t}(x) =\int_{0}^{x} H_{t}'(\xi) d\xi$ is a $C^{1+\alpha}$ diffeomorphism of the real line satisfying $H_{t}(x+1) =H_{t}(x)+1$ and $H_{t}(0)=0$. Thus it a $C^{1+\alpha}$ circle diffeomorphism. Let $h_{t}$ fixing $1$ be the corresponding $C^{1+\alpha}$ diffeomorphism of $T$.
Define $f_{t} = h_{t}^{-1}\circ f\circ h_{t}$. Since $h_{t}$ is $C^{1+\alpha}$ circle diffeomorphism, $f_{t}$ is a $C^{1+\alpha}$ expanding circle endomorphism for any $0\leq t\leq 1$. Then $\gamma_{1}(t)=\{ f_{t}\}_{0\leq t\leq 1}$ is a continuous curve in ${\mathcal C}^{1+\alpha}$ connecting $f$ and $g$.

Let $G$ be the corresponding map for $g$ on the real line. We have that $G(x+1) =G(x)+d$ and $G'(x+1)=G'(x)$.
The fact that $g$ preserves the Lebesgue measure is equivalent to that
$$
\sum_{0\leq i\leq d-1} \frac{1}{G' (G^{-1} (x+i))} =1, \quad \forall x\in [0,1].
$$
This implies that $G'(x)>1$ for all $x\in [0,1]$. Define
$$
G_{t}'(x) = t G'(x) +(1-t) d, \quad x\in {\mathbb R}, \;\; 0\leq t\leq 1.
$$
For any $0\leq t\leq 1$, we have that $G_{t}'(x+1) =G_{t}'(x)$ for all $x\in {\mathbb R}$ and that $G_{t}'(x) >1$ for all $x\in [0,1]$.
Define $G_{t} (x) =\int_{0}^{x} G_{t}'(\xi)d\xi$. Then $G_{t} (x+1) =G_{t}(x)+d$ for all $x\in {\mathbb R}$. Thus $G_{t}$ is
a $C^{1+\alpha}$ expanding circle endomorphism. Let $g_{t}$ be the corresponding one on $T$. Then $\gamma_{2}(t) =\{ g_{t}\}_{0\leq t\leq 1}$ is a continuous curve in ${\mathcal C}^{1+\alpha}$ connecting $g$ and $q_{d}(z)=z^{d}$.

Define
$$
\Psi (f, t)= \left\{ \begin{array}{ll}
                      \gamma_{1} (2t), & 0\leq t\leq \frac{1}{2};\\
                      \gamma_{2} (2t-1), & \frac{1}{2}\leq t\leq 1.
                      \end{array}
             \right.
$$
Then $\Psi (f, t) :{\mathcal C}^{1+\alpha} \to {\mathcal C}^{1+\alpha}$ is a continuous homotopy map moving every point to the point $q_{d}(z)=z^{d}$. So ${\mathcal C}^{1+\alpha}$ is contractible.

Following the above argument, we have also that ${\mathcal C}^{1+}$ and ${\mathcal T}{\mathcal C}^{1+}$ are both contractible spaces.
\end{remark}

\section{Linear model and
dual derivative}

Suppose $f$ is a uniformly symmetric circle endomorphism of degree
$d\geq 2$. Let $D^{*}(f)(w^{*})$ be its dual derivative.
$$
\delta = D^{*}(f) (\cdots 000) =\lim_{n\to \infty}
\frac{F^{-n}(1)}{F^{-(n+1)}(1)}.
$$

Let $\Upsilon(x)=x+1$ be the translation by $1$ on the real line
${\mathbb R}$. For any $x\in {\mathbb R}=(-\infty, \infty)$, let
$$
[x] =\{ y\in {\mathbb R}\;\; |\;\; \Upsilon^{n}(x)=y, \;\;
\hbox{for some integer $n$}\}
$$
Then the unit circle can be thought as a topological space
$$
{\mathbb R}/\Upsilon =\{ [x]\}
$$
with linear Lebesgue metric introduced from ${\mathbb R}$. The
copies of the unit circle are $[k,k+1)$ for all integers $k$. The
circle endomorphism $f$ can be thought of as a map
$$
[x] \to [F(x) \pmod{1}].
$$

For each $n>0$, consider the homeomorphism
$$
\vartheta_{n}(x) =\frac{F^{-n}(x)}{F^{-n}(1)} : {\mathbb R}\to
{\mathbb R}.
$$
The $\{ \vartheta_{n}(x)\}$ is a sequence of uniformly symmetric
homeomorphisms of ${\mathbb R}$. We would like to show that $\{
\vartheta_{n}(x)\}$ is a convergent sequence and uniformly on any
compact set of ${\mathbb R}$ as follows.

For any $\epsilon
>0$, there is an $n_{0}>0$ such that $F^{-m}$ on $[0, F^{-n}(1)]$
is $(1+\epsilon)$-quasisymmetric for any $m>n\geq n_{0}$. Then
$$
H(y) = \frac{F^{-m+n} (F^{-n}(1)y)}{F^{-m}(1)} :[0,1]\to[0,1]
$$
is a $(1+\epsilon)$-quasisymmetric homeomorphism with $H(0)=0$ and
$H(1)=1$. From Lemma~\ref{qsdistortion}
$$
|H(y)-y| \leq \frac{\epsilon}{2}, \quad \forall y\in [0,1].
$$
Thus for $y=F^{-n}(x)/F^{-n}(1)$ for $x\in [0,1]$, we have
$$
|\vartheta_{n+m}(x) - \vartheta_{n}(x)|< \epsilon, \;\;\forall
m>n\geq n_{0}.
$$
This implies that $\{ \vartheta_{n}(x)\}_{n=0}^{\infty}$ is a
uniformly convergent Cauchy sequence on $[0,1]$. Thus it converges
uniformly to a function $\vartheta (x)$ on $[0,1]$. Similarly the
sequence of inverses $\{ \vartheta_{n}^{-1}(y) = F^{n}(F^{-n}(1)
y)\}_{n=0}^{\infty}$ is also a uniformly convergent Cauchy
sequence, it converges uniformly to a function which is the
inverse of $\vartheta (x)$. So $\vartheta (x)$ is a homeomorphism.
A direct calculation implies that $\vartheta (x)$ is symmetric on
$[0,1]$. For any fixed $k>0$, $F^{-k}$ maps $[0, d^{k}]$ onto
$[0,1]$. Using the relation
$$
\vartheta_{n+k} (x) = \frac{F^{-n}(1)}{F^{-n-k}(1)} \vartheta_{n}
(F^{-k}(x)), \; x\in [0, d^{k}],
$$
we get that $\{ \vartheta_{n}(x)\}_{n=0}^{\infty}$ is a uniformly
convergent Cauchy sequence on $[0,d^{k}]$ and converges to a
uniformly symmetric homeomorphism $\vartheta (x)$, since $\{
F^{-k}\}_{k=0}^{\infty}$ is uniformly symmetric. We conclude that
$\{ \vartheta_{n}(x)\}$ is a convergent sequence and converges
uniformly on any compact set of ${\mathbb R}^{+}=[0, \infty)$; and
the limit function is a uniformly symmetric homeomorphism
$\vartheta$ of ${\mathbb R}^{+}$. Moreover, $\vartheta (x)$
conjugates $F$ to a linear map $x\to \delta x$ on ${\mathbb
R}^{+}$, that is,
$$
\vartheta \circ F \circ \vartheta^{-1} (x) = \delta x, \;\;
\forall x\geq 0.
$$
Similar, we can prove the above on ${\mathbb R}^{-1}=(-\infty,
0]$.

The linear model of $f$ is the conjugate function of the linear
equivalence $\Upsilon$ by $\vartheta$, that is,
\begin{equation}~\label{linearmodel}
L (x) = \vartheta \circ \Upsilon \circ \vartheta^{-1} (x)
=\vartheta (\vartheta^{-1}(x) +1).
\end{equation}
Since
$$
F\circ \Upsilon (x) = \Upsilon^{d} \circ F,
$$
We have
\begin{equation}~\label{multiequation}
L(0)=1, \quad \hbox{and}\quad L(x) =\delta^{-1} L^{d} (\delta x),
\quad \forall x\in {\mathbb R}.
\end{equation}

Now we have a new point of view for the unit circle and the circle
endomorphism $f$: For any $x\in [0, 1)$, let
$$
[x]_{L} =\{ y\in {\mathbb R}\;\; |\;\; L^{n}(x)=y, \;\; \hbox{for
some integer $n$}\}.
$$
The unit circle can be thought as a topological space
$$
{\mathbb R}/L=\{ [x]_{L}\}
$$
with the metric introduced from $L$. Copies of the unit circle now
are all intervals $[\vartheta(k), \vartheta(k+1))= [L^{k}(0),
L^{k+1}(0))$ for all integers $k$. The circle endomorphism $f$ can
be thought of as a map
$$
[x]_{L} \to [\delta x \pmod{L}].
$$

\vspace*{10pt}
\begin{theorem}~\label{linearmodelthm} Suppose $f$ and $g$ are
two uniformly symmetric circle endomorphisms of the same degree
$d\geq 2$. Let $h$ be the conjugacy between $f$ and $g$, that is,
$$
h\circ f=g\circ h.
$$
Then $h$ is a symmetric homeomorphism if and only if the linear
models of $f$ and $g$ are the same and $D^{*}(f)(\cdots 000)=
D^{*}(g)(\cdots 000)$.
\end{theorem}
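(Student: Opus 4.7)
The plan is to introduce an intertwining map between the two linear models and show that the symmetry of $h$ forces this map to be the identity. First I would normalize so that $h(1)=1$ (as in the proof of Corollary~\ref{quasisymmetriccojugacy}) and lift to $H:\mathbb{R}\to\mathbb{R}$ with $H(0)=0$, $H(1)=1$, and $H(x+1)=H(x)+1$; with this normalization the conjugacy lifts exactly: $H\circ F=G\circ H$. Writing $\delta_f,\vartheta_f,L_f$ and $\delta_g,\vartheta_g,L_g$ for the data attached to $f$ and $g$ in the construction of~(\ref{linearmodel}), I set $\phi:=\vartheta_g\circ H\circ\vartheta_f^{-1}$. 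The relation $H\circ F=G\circ H$ together with $\vartheta_f F=\delta_f\vartheta_f$ and $\vartheta_g G=\delta_g\vartheta_g$ gives the scaling equation $\phi(\delta_f x)=\delta_g\phi(x)$, while $H\circ\Upsilon=\Upsilon\circ H$ combined with the definition of $L_f,L_g$ gives $\phi\circ L_f=L_g\circ\phi$. Using $\vartheta_f(1)=\vartheta_g(1)=1$, one also has $\phi(1)=1$.

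For the ``only if'' direction, assume $h$ is symmetric, so $H$ is symmetric. Because the symmetric homeomorphisms form a group $\mathcal{S}$ and $\vartheta_f,\vartheta_g$ are uniformly symmetric by the construction preceding~(\ref{linearmodel}), the map $\phi$ is symmetric with some modulus $\epsilon(t)\to 0$. For each $n\geq 1$ I define $Q_n(t):=\phi(\delta_f^{-n}t)/\phi(\delta_f^{-n})$ on $[0,1]$. Iterating the scaling equation backward yields $\phi(\delta_f^{-n}t)=\delta_g^{-n}\phi(t)$ and $\phi(\delta_f^{-n})=\delta_g^{-n}$, so the crucial collapse $Q_n(t)\equiv\phi(t)$ holds for every $n$. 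On the other hand, $Q_n|_{[0,1]}$ is a normalized $(1+\epsilon(\delta_f^{-n}))$-quasisymmetric homeomorphism of $[0,1]$, so Lemma~\ref{qsdistortion} gives $|\phi(t)-t|\leq\zeta(1+\epsilon(\delta_f^{-n}))$; letting $n\to\infty$ forces $\phi(t)=t$ on $[0,1]$. Substituting $t=\delta_f^{-1}$ in the scaling equation yields $\phi(1)=\delta_g/\delta_f$, so $\delta_f=\delta_g$. The scaling relation then propagates $\phi\equiv\mathrm{id}$ to all of $\mathbb{R}^+$, and the analogous argument on $\mathbb{R}^-$ completes the global identity. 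Finally $\phi\circ L_f=L_g\circ\phi$ collapses to $L_f=L_g$.

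For the converse, assume $\delta_f=\delta_g=:\delta$ and $L_f=L_g=:L$, and define $H:=\vartheta_g^{-1}\circ\vartheta_f$. A direct check gives $H\circ F=\vartheta_g^{-1}(\delta\vartheta_f)=\vartheta_g^{-1}\delta\vartheta_g\circ H=G\circ H$, so $H$ is a conjugacy; the identity $H(x+1)=H(x)+1$ follows from $\vartheta_f(x+1)=L\vartheta_f(x)$ together with $L=\vartheta_g\circ\Upsilon\circ\vartheta_g^{-1}$; and $H(1)=1$ since $\vartheta_f(1)=\vartheta_g(1)=1$. As a composition of the uniformly symmetric maps $\vartheta_g^{-1}$ and $\vartheta_f$, the map $H$ is uniformly symmetric, hence symmetric. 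Uniqueness of the normalized conjugacy (it is pinned down on the dense backward orbit of the fixed point $1$) identifies this $H$ with the lift of the given $h$, so $h$ is symmetric.

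The heart of the argument is the identity $Q_n\equiv\phi$ produced by the normalization $\phi(1)=1$ together with the scaling relation; it is this collapse that lets Lemma~\ref{qsdistortion} be applied at the arbitrarily small scale $\delta_f^{-n}$ and force $\phi$ to be the identity, which simultaneously produces $\delta_f=\delta_g$ and $L_f=L_g$. The remaining technical points — that $\phi$ inherits uniform symmetry from its factors (via the group structure of $\mathcal{S}$) and that the argument transfers to $\mathbb{R}^-$ — are routine and already implicit in the construction of $\vartheta$ given above.
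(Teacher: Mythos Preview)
Your proof is correct and takes essentially the same approach as the paper. The paper reaches the identical conclusion $\vartheta_f=\vartheta_g\circ H^{-1}$ (equivalently your $\phi=\mathrm{id}$) by passing directly to the limit in $F^{-n}(x)/F^{-n}(1)=H(G^{-n}H^{-1}(x))/H(G^{-n}(1))$ and applying Lemma~\ref{qsdistortion} to the symmetric map $H$ itself rather than to the composite $\phi$; the converse, via $H:=\vartheta_g^{-1}\circ\vartheta_f$, is identical in both.
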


\begin{proof}
Suppose $h$ is symmetric. Applying Lemma~\ref{qsdistortion}, we
have that
$$
D^{*}(f)(\cdots 000)= D^{*}(g)(\cdots 000).
$$

Let $F$, $G$, and $H$ be the lifts of $f$, $g$, and $h$. Then
$$
F^{-n} (x) = H (G^{-n} (H^{-1}(x))).
$$
Since $H(1)=1$, we get
$$
\frac{F^{-n}(x)}{F^{-n}(1)} = \frac{H\circ G^{-n}\circ H^{-1}(x)}{
H\circ G^{-n}(1)}.
$$
Since $H$ is symmetric, we get, by using Lemma~\ref{qsdistortion},
$$
\vartheta_{f} (x) = \vartheta_{g}\circ H^{-1}(x).
$$
So
$$
L_{f} (x) =\vartheta_{f} (\vartheta_{f}^{-1}(x)+1) =
\vartheta_{g}( H^{-1}(H (\vartheta_{g}^{-1}(x) +1)))
$$
$$
= \vartheta_{g}( H^{-1}(H(\vartheta_{g}^{-1}(x))) +1) =
\vartheta_{g}(\vartheta_{g}^{-1}(x) +1) =L_{g} (x).
$$

Conversely, suppose $L_f=L_g$ and
$$
\delta=D^{*}(f)(\cdots 000)= D^{*}(g)(\cdots 000).
$$
Then
$$
L_{f} (x) =\vartheta_{f} (\vartheta_{f} (x)+1) = \vartheta_{g}
(\vartheta_{g} (x)+1)=L_{g}(x)
$$
Let
$$
H(x) =\vartheta_{g}^{-1}\circ \vartheta_{f} (x).
$$
We have $H(x+1)=H(x)+1$. So $H$ is a symmetric circle
homeomorphism. Since
$$
F(x) =\vartheta_{f}^{-1} (\delta \vartheta_{f} (x)) \quad
\hbox{and}\quad  G(x) =\vartheta_{g}^{-1} (\delta \vartheta_{g}
(x)),
$$
we get that
$$
F(x) =H^{-1} \circ G\circ H (x).
$$
So $f$ and $g$ are symmetrically conjugate. We proved the theorem.
\end{proof}

Suppose $\{ \eta_{n}\}_{n=0}^{\infty}$ is the sequence of nested
partitions on $[0,1]$ for $f$. For any $w^{*}\in \Sigma^{*}$, let
$w_{n}^{*}=j_{n-1}\cdots j_{1}j_{0}$ and $v_{n-1}^{*}
=j_{n-1}\cdots j_{1}$. Since $\vartheta(x)$ is symmetric on
$[0,1]$ with $\vartheta(0)=0$ and $\vartheta(1)=1$, from
Lemma~\ref{qsdistortion},
$$
D^{*}(f)(w^{*}) =\lim_{n\to \infty}
\frac{|I_{v_{n-1}}|}{|I_{w_{n}}|} =\lim_{n\to\infty}
\frac{|\vartheta(I_{v_{n-1}})|}{|\vartheta(I_{w_{n}})|}.
$$

Consider non-negative integers
$$
k=j_{0}+j_{1}d+\cdots+j_{n-1}d^{n-1} \quad \hbox{and}\quad
l=j_{1}+j_{2}d+\cdots +j_{n-1}d^{n-2}.
$$
Then $k=dl+j_{0}$ and
$$
I_{w_{n}} = F^{-n} ([k, k+1])\quad \hbox{and}\quad I_{v_{n-1}} =
F^{-(n-1)} ([l, l+1]).
$$
Since $\vartheta (F^{-n}(x)) =\delta^{-n}\vartheta (x)$ and since
$\delta \vartheta (l) = \vartheta (F(l)) =\vartheta (dl)$,
$$
\frac{|\vartheta(I_{v_{n-1}})|}{|\vartheta(I_{w_{n}})|}
=\frac{\delta |\vartheta ([l,l+1])|}{|\vartheta ([k,k+1])|} =
\frac{|\vartheta ([k-j_{0},k+d -j_{0}])|}{|\vartheta ([k,k+1])|} .
$$
This implies that
$$
\frac{|\vartheta ([k-j_{0},k+d -j_{0}])|}{|\vartheta ([k,k+1])|} =
D^{*}(f) (\cdots 000j_{n-1}\cdots j_{1}j_{0}).
$$
Since $L^{k}(0)=\vartheta (k)$, the above equality says that all
values of $L^{k}(0)$ are uniquely determined by
$$
\{ D^{*}(f) (\cdots 000 j_{n-1}\cdots j_{1}j_{0}) \;\;|\;\;
j_{k}=0, 1, \cdots (d-1), k=0, 1, \cdots, n-1\}.
$$
Using Equation~(\ref{multiequation}), we get that the linear model
$L$ is uniquely determined by the dual derivative $D^{*}(f)
(w^{*})$. Thus we have a corollary of Theorem~\ref{linearmodel}.

\vspace*{10pt}
\begin{corollary}~\label{same1}
Suppose $f$ and $g$ are two uniformly symmetric circle
endomorphisms of the same degree $d\geq 2$. Let $h$ be the conjugacy
between $f$ and $g$ such that $h(1)=1$, that is,
$$
h\circ f=g\circ h.
$$
Then $h$ is a symmetric homeomorphism if and only if the dual
derivatives of $f$ and $g$ are the same, that is,
$$
D^{*}(f)(w^{*})= D^{*}(g)(w^{*}), \quad \forall w^{*}\in
\Sigma^{*}.
$$
\end{corollary}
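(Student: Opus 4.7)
The plan is to deduce Corollary~\ref{same1} directly from Theorem~\ref{linearmodelthm}, which already equates symmetric conjugacy with the simultaneous equalities $L_f=L_g$ and $D^{*}(f)(\cdots 000)=D^{*}(g)(\cdots 000)$. What remains is therefore to show that knowing the whole dual derivative is equivalent to knowing the pair $(L,\delta)$.

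For the ``only if'' implication, suppose $h$ is symmetric, with lift $H$. Since $h\circ f=g\circ h$ and $h(1)=1$, the map $H$ sends the Markov interval $I_{v_{n-1},f}$ onto $I_{v_{n-1},g}$ and $I_{w_n,f}$ onto $I_{w_n,g}$. By Theorem~\ref{boundedgeometrythm} there is $0<\tau<1$ with $|I_{v_{n-1},f}|\leq \tau^{n}$, and symmetry of $H$ together with the uniformly symmetric hypothesis produces a bounded vanishing $\varepsilon(t)$ such that, after normalizing $I_{v_{n-1},f}$ affinely to $[0,1]$, the restriction of $H$ is $(1+\varepsilon(\tau^{n}))$-quasisymmetric. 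Applying Lemma~\ref{qsdistortion} to the ratio $|H(I_{w_n,f})|/|H(I_{v_{n-1},f})|$ then gives
$$|D^{*}(g)(w_n^{*})-D^{*}(f)(w_n^{*})|\leq \zeta(1+\varepsilon(\tau^{n}))\to 0,$$
and letting $n\to\infty$ yields $D^{*}(f)(w^{*})=D^{*}(g)(w^{*})$ for every $w^{*}\in\Sigma^{*}$. This is essentially the argument used at the end of the proof of Theorem~\ref{smoothT}.

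For the converse, assume $D^{*}(f)\equiv D^{*}(g)$ on $\Sigma^{*}$. Taking $w^{*}=\cdots 000$ already forces $\delta_f=\delta_g=:\delta$, and I would then invoke the identities derived just before the corollary,
$$\frac{|\vartheta_f([k-j_0,k+d-j_0])|}{|\vartheta_f([k,k+1])|}=D^{*}(f)(\cdots 000\,j_{n-1}\cdots j_1 j_0)$$
for $k=j_0+j_1 d+\cdots+j_{n-1}d^{n-1}$, and likewise for $g$. Combined with $\vartheta_f(0)=\vartheta_g(0)=0$ and $\vartheta_f(1)=\vartheta_g(1)=1$, these recursion relations determine $\vartheta_f(k)$ and $\vartheta_g(k)$ for every non-negative integer $k$ from the common dual derivative, so $\vartheta_f(k)=\vartheta_g(k)$ on ${\mathbb Z}_{\geq 0}$. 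An analogous argument on ${\mathbb R}^{-}$ handles negative integers. By~(\ref{linearmodel}) this already gives $L_f=L_g$ on the orbit $\{L^{k}(0)\}_{k\in{\mathbb Z}}$, and the functional equation~(\ref{multiequation}), $L(x)=\delta^{-1}L^{d}(\delta x)$, iteratively propagates the equality to the dense set $\{\vartheta(k/\delta^{m})\}$; continuity of $L_f$ and $L_g$ then forces $L_f\equiv L_g$ on all of ${\mathbb R}$. An application of Theorem~\ref{linearmodelthm} finishes the proof.

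The main obstacle is the last step of the converse: passing from equality of $L_f$ and $L_g$ on the discrete orbit of $0$ to equality as continuous functions on ${\mathbb R}$. I expect this to be handled cleanly by the scaling equation~(\ref{multiequation}) together with the uniform continuity of $\vartheta$ built up in the construction of the linear model, but this is the only place where more than a routine substitution is needed.
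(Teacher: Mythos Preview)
Your proposal is correct and follows the paper's route: the corollary is presented there as an immediate consequence of Theorem~\ref{linearmodelthm} together with the preceding computation showing that the values $L^{k}(0)=\vartheta(k)$ are determined by $\{D^{*}(f)(\cdots 000\,j_{n-1}\cdots j_{0})\}$, and that Equation~(\ref{multiequation}) then pins down $L$. Your ``if'' direction reproduces exactly this, and you have correctly isolated the only nontrivial point---propagating $L_f=L_g$ from the discrete orbit to all of $\mathbb{R}$.

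Two small corrections. First, the dense set obtained from iterating~(\ref{multiequation}) is $\{\delta^{-m}\vartheta(k)\}$, not $\{\vartheta(k/\delta^{m})\}$; since $\vartheta$ is nonlinear these differ, and it is the former on which one checks inductively that $L_f^{j}(\delta^{-m}\vartheta(k))=\delta^{-m}\vartheta(k+jd^{m})$ for both $f$ and $g$, with gaps of size $\delta^{-m}(\vartheta(k+1)-\vartheta(k))\leq \delta\,\tau^{m+1}\to 0$. Second, Lemma~\ref{qsdistortion} bounds $|Q(x)-x|$ and hence $\bigl|1/D^{*}(g)(w_n^{*})-1/D^{*}(f)(w_n^{*})\bigr|$, not the difference of $D^{*}$ directly; bounded geometry converts one into the other, so this is cosmetic.

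The only genuine deviation from the paper is in the ``only if'' direction: the paper (implicitly) argues both implications through the linear model, whereas you bypass it by the direct quasisymmetric--distortion argument borrowed from Theorem~\ref{smoothT}. Both are valid; your route is arguably cleaner for that half.
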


Following the above theorem we set a one-to-one correspondence
between the Teichm\"uller space ${\mathcal T}{\mathcal U}{\mathcal
S}$ and the space of all continuous dual derivatives:
$$
\Pi =[(f, h_{f})] \to D^{*}(f)(w^{*}).
$$
Therefore,
\begin{equation}~\label{qcrepresentation}
{\mathcal T}{\mathcal U}{\mathcal S} =\{ D^{*}(f)(w^{*})\;\; |\;\;
f\in {\mathcal U}{\mathcal S}\}
\end{equation}
equipped with the Teichm\"uller metric $d_{\mathcal T}(\cdot,
\cdot)$. This is a complete space.

\section{Characterization of dual derivatives}

Assume $d=2$ in this section. Suppose $f\in {\mathcal U}{\mathcal
S}$. Let $D^{*}(f)(w^{*})$ be its dual derivative. Then it is easy
to see the following summation condition:
\begin{equation}~\label{summationcondition}
\frac{1}{D^{*}(f) (w^{*}0)}+\frac{1}{D^{*}(f) (w^{*}1)} =1, \quad
\forall w^{*} \in \Sigma^{*}.
\end{equation}
Another non-trivial condition is the following compatibility
condition:
\begin{equation}~\label{compatibilitycondition}
\prod_{n=0}^{\infty} \frac{D^{*}(f)(w^{*} 0\overbrace{1\ldots
1}^{n})}{D^{*}(f) (w^{*} 1\underbrace{0\ldots 0}_{n})} =const,
\quad \forall w^{*}\in \Sigma^{*}.
\end{equation}
The convergence is uniform. And moreover, if $f\in {\mathcal
C}^{1+}$, then the convergence is exponential. We give a proof of
this non-trivial condition as follows.

First let us set up a relation between the dual derivative
$D^{*}(f)(w^{*})$ and the linear model $L$. Suppose $\vartheta$ is
the symmetric homeomorphism such that
$$
L(x) =\vartheta \Upsilon \vartheta^{-1}(x)\quad \hbox{and}\quad
\delta x = \vartheta F \vartheta^{-1}(x).
$$
Then $L^{k}([0,1])= [\vartheta (k),\vartheta (k+1)]$ for every
integer $k$.

For any $w^{*}=\cdots j_{n-1}\cdots j_{0}\in \Sigma^{*}$, let
$w_{n}^{*}=j_{n-1}\cdots j_{0}$ and define integers
$$
k=k(w^{*}_{n}) =\sum_{q=0}^{n-1} j_q 2^q\quad \hbox{and}\quad
l=k(\sigma^{*}(w^{*}_{n}))=\sum_{q=0}^{n-2} j_{q+1} 2^{q}.
$$
Then $k= 2l+j_{0}$. By the definitions,
$$
D^{*}(f)(w^{*}) =
\lim_{n\to\infty}\frac{|I_{w^{*}_n}|}{|I_{\sigma^*(w^{*}_n)}|} =
\lim_{n\to\infty}\frac{|\vartheta (I_{w^{*}_n})|}{|\vartheta
(I_{\sigma^*(w^{*}_n)})|}.
$$
Note that
$$
I_{w^{*}_n} = G_{j_{n-1}} \circ \cdots \circ G_{j_0}(I) =F^{-1}
\circ \Upsilon^{j_{n-1}} \circ \cdots \circ F^{-1} \circ
\Upsilon^{j_0}([0,1])
$$
$$
=F^{-n} (\Upsilon^{j_0 + 2j_1 + \cdots
+ 2^{n-1}j_{n-1}}([0,1])) = F^{-n}([k, k+1]),
$$
and, similarly,
$$
I_{\sigma^*(w^{*}_n)} = F^{-n+1}([l,l+1]).
$$
Therefore, since $\vartheta (F^{-n}(x)) = \delta^{-n} \vartheta
(x)$ and since
$$
\delta \vartheta (l) =\vartheta (F(l))=\vartheta (F( \Upsilon^l
(0)))=\vartheta \Upsilon^{2l}(F(0)) =\vartheta (2l),
$$
we have
$$
\frac{|\vartheta (I_{w^{*}_{n}})|}{|\vartheta
(I_{\sigma^{*}(w^{*}_{n})})|} = \frac{|\vartheta
(F^{-n}([k,k+1]))|}{|\vartheta (F^{-n+1}([l, l+1]))|} =
\frac{|\vartheta ([k, k+1])|}{\delta |\vartheta ([l, l+1])|}
=\frac{|\vartheta ([k, k+1])|}{|\vartheta([k-j_{0}, k-j_{0}+2])|}.
$$

Let $I=[0,1]$. Since $\delta I = I \cup L(I)$ and $\vartheta
(k)=L^{k}(0)$, we can rewrite
$$
\frac{|\vartheta([k, k+1])|}{|\vartheta([k-j_{0}, k-j_{0}+2])|}=
\frac{|L^{k}(I)|}{|L^{k-j_{0}}(I+L(I))|} =\Big(
1+\frac{|L^k(I)|}{|L^{(-1)^{j_{0}}} (L^k(I))|}\Big)^{-1}.
$$
Thus we get
$$
D^{*}(f)(w^{*}) = \lim_{n\to
\infty}\frac{|L^{k}(I)|}{|L^{k-j_{0}}(I+L(I))|} =\lim_{n\to
\infty}\Big( 1+\frac{|L^{(-1)^{j_{0}}}
(L^k(I))|}{|L^k(I)|}\Big)^{-1}.
$$

For any $w^{*}=\cdots w_{n}^{*}=\cdots j_{n-1}\cdots j_{1}j_{0}
\in \Sigma^{*}$, define
$$
sol(w^{*})= \lim_{n\to \infty} \frac{|L^{k}(I)|}{|L^{k-1}(I)|}.
$$
(This is similar to a solenoid function defined
in~\cite{SullivanSol,PintoSullivan}). Then, by considering
$w^{*}=v^{*}j_{0}$, we have
\begin{equation}~\label{scalinglinear0}
D^{*}(f)(v^{*}0) = \lim_{n\to \infty}
\left(1+\frac{|L^{k+1}(I)|}{|L^k(I)|}\right)^{-1}= (1 +
sol(v^{*}1))^{-1}
\end{equation}
and
\begin{equation}~\label{scalinglinear1}
D^{*}(f)(v^{*}1) =\lim_{n\to \infty}
\left(1+\frac{|L^{k-1}(I)|}{|L^{k}(I)|}\right)^{-1}= \Big( 1 +
\frac{1}{sol(v^{*}1)}\Big)^{-1}.
\end{equation}
These two equations combining with the summation
condition~(\ref{summationcondition}) imply that
\begin{equation}~\label{scalingsol}
sol(v^{*}1)=\frac{D^{*}(f)(v^{*}0)}{D^{*}(f)(v^{*}1)}.
\end{equation}
Since
$$
\frac{\delta L^k(I)}{\delta L^{k-1}(I)} =\frac{L^{2k}(\delta
I)}{L^{2k-2}(\delta I)}=
\frac{L^{2k}(I)+L^{2k+1}(I)}{L^{2k-2}(I)+L^{2k-1}(I)},
$$
we have the following formula:
$$
\frac{sol(w^{*}0)}{sol(w^{*})}=\lim_{n\to \infty}
\frac{\frac{L^{2k-2}(I)+L^{2k-1}(I)}{L^{2k-1}(I)}}{\frac{L^{2k}(I)+L^{2k+1}(I)}{L^{2k}(I)}}
 =
\frac{1+\frac{L^{2k-2}(I)}{L^{2k-1}(I)}}{1+sol(w^{*}1)}.
$$
Equations~(\ref{scalinglinear0}),~(\ref{scalinglinear1}),
and~(\ref{scalingsol}) imply that
$$
\frac{D^{*}(f)(w^{*}01)}{D^{*}(f)(w^{*}10)} =
\frac{1+[sol(w^{*}01)]^{-1}}{1+[sol(w^{*}11)]}=\frac{sol(w^{*}10)}{sol(w^{*}1)}.
$$
(Note that for $w^{*}1$, $2k-1$ corresponds to $w^{*}01$.)
Similarly,
$$
\frac{D^{*}(f)(w^{*}011)}{D^{*}(f)(w^{*}100)} =
\frac{1+[sol(w^{*}011)]^{-1}}{1+[sol(w^{*}101)]}=\frac{sol(w^{*}100)}{sol(w^{*}10)}.
$$
Proceeding by induction, we conclude
\begin{equation}~\label{solseq}
\hbox{sol} (w^{*} 1\underbrace{0\cdots 0}_{n-1}) =
\prod_{i=0}^{n-1} \frac{D^{*}(f) (w^{*} 0\overbrace{1\cdots
1}^{i})}{D^{*}(f)(w^{*} 1\underbrace{0\cdots 0}_{i})}.
\end{equation}
From the quasiymmetric distortion property
(Lemma~\ref{qsdistortion}), ~(\ref{solseq}) converges uniformly to
$sol(\cdots 0 \cdots 0)$ for all $w^{*}\in\Sigma^*$. If $f\in
{\mathcal C}^{1+}$, from the H\"older distortion property
~(\ref{smoothdistortion}), ~(\ref{solseq}) converges exponentially
to $sol(\cdots 0 \cdots 0)$ for all $w^{*}\in\Sigma^*$. We proved
the compatibility condition~(\ref{compatibilitycondition}).

In the paper~\cite{CuiJiangQuas,CuiGardinerJiang,Jiang7}, we
further proved that the conditions~(\ref{summationcondition})
and~(\ref{compatibilitycondition}) are also sufficient as follows.

\vspace*{10pt}
\begin{theorem}~\label{sfr} Let $\Psi (w^{*})$ be a positive
continuous function on $\Sigma^{*}$. Then it is a dual derivative
of an $f\in {\mathcal U}{\mathcal S}$ if and only if it satisfies
the conditions~(\ref{summationcondition})
and~(\ref{compatibilitycondition}). Furthermore, if $\Psi (w^{*})$
is a H\"older continuous function on $\Sigma^{*}$, then it is a
dual derivative of an $f\in {\mathcal C}^{1+}$ if and only if it
satisfies the conditions~(\ref{summationcondition})
and~(\ref{compatibilitycondition}).
\end{theorem}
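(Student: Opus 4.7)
The plan is to handle the two directions separately. Necessity is largely contained in the preceding analysis of this section: the summation condition~(\ref{summationcondition}) is immediate from $|I_v|=|I_{v0}|+|I_{v1}|$ combined with the convergence $|I_v|/|I_{vj}|\to \Psi(v^*j)$, while the compatibility condition~(\ref{compatibilitycondition}) was already derived as the limit of~(\ref{solseq}). The uniform convergence of that product for $f\in\mathcal{US}$ follows from Lemma~\ref{qsdistortion} applied to the $(1+\varepsilon(\tau^n))$-quasisymmetric inverse branches $F^{-n}$, and the exponential convergence for $f\in\mathcal{C}^{1+}$ follows from the H\"older distortion estimate~(\ref{smoothdistortion}).

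For sufficiency, the plan is to reverse the passage from $f$ to $D^*(f)$: from $\Psi$, reconstruct first a solenoid function, then the linear model $L$, then the symmetric conjugating homeomorphism $\vartheta$, and finally the circle endomorphism $F$. Given $\Psi$ satisfying both conditions, set the candidate solenoid values $\mathrm{sol}_\Psi(v^*1):=\Psi(v^*0)/\Psi(v^*1)$ as motivated by~(\ref{scalingsol}). The compatibility condition~(\ref{compatibilitycondition}) is precisely what forces the telescoping product~(\ref{solseq}) to converge—uniformly when $\Psi$ is continuous, exponentially when $\Psi$ is H\"older—producing a positive continuous (resp.\ H\"older) function $\mathrm{sol}_\Psi$ on $\Sigma^*$. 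Setting $\delta=\Psi(\cdots 000)$, $\ell_0=1$, and using $\mathrm{sol}_\Psi$ to propagate the ratios $\ell_{k+1}/\ell_k$ forward and backward in $k\in\mathbb{Z}$, we obtain a doubly infinite sequence of positive lengths. Define $\vartheta:\mathbb{R}\to\mathbb{R}$ by $\vartheta(k)=\sum_{j<k}\ell_j$ extended affinely between consecutive integers, put $L(x)=\vartheta(\vartheta^{-1}(x)+1)$, and finally $F(x)=\vartheta^{-1}(\delta\vartheta(x))$. A direct verification using~(\ref{multiequation}) shows $F(x+1)=F(x)+d$ and $F(0)=0$, so $F$ descends to a degree-$2$ circle endomorphism whose Markov partitions $\eta_n$ have interval lengths matching the prescribed ratios, hence $D^*(F)=\Psi$ by construction.

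The main obstacle is the regularity transfer: showing that continuity of $\Psi$ yields $F\in\mathcal{US}$, and that H\"older continuity of $\Psi$ yields $F\in\mathcal{C}^{1+}$. The strategy is to convert the modulus of continuity of $\Psi$ (hence of $\mathrm{sol}_\Psi$) into a modulus for the bounded nearby geometry of $\{\eta_n\}$: if $\Psi$ has small oscillation on long common tails, then adjacent intervals in each $\eta_n$ have nearly equal lengths with a controlled, vanishing deficit. Feeding this bounded-nearby-geometry-with-rate into the conjugacy identity $\vartheta\circ F=(\delta\,\cdot)\circ\vartheta$ and applying Lemma~\ref{qsdistortion} to the inverse branches converts the geometric control into the uniformly symmetric bound of Definition~\ref{uniformlysymmetry}. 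In the H\"older case the same chain of estimates upgrades to the H\"older distortion bound~(\ref{smoothdistortion}), and an absolute-continuity/bootstrap argument in the spirit of the proof of Theorem~\ref{onepointrigidity} promotes $\vartheta$ to a $C^{1+\alpha}$ diffeomorphism on each branch, yielding a $C^{1+\alpha}$ expanding representative in $\mathcal{C}^{1+}$ with the prescribed dual derivative.
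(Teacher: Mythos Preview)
The paper itself does not prove Theorem~\ref{sfr}; it explicitly states that the proof is technical and refers the reader to~\cite{CuiJiangQuas,CuiGardinerJiang,Jiang7}. So there is no in-paper proof to compare against beyond the necessity direction, which is indeed carried out in the text preceding the theorem and which your proposal summarizes correctly.

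For sufficiency, your outline has the right architecture (recover $\mathrm{sol}$, then $L$, then $\vartheta$, then $F$), but the specific construction you give fails at a concrete step. You define $\vartheta$ on integers via $\vartheta(k)=\sum_{j<k}\ell_j$ and then extend it \emph{affinely} between consecutive integers. With that choice, $F(x)=\vartheta^{-1}(\delta\,\vartheta(x))$ is piecewise linear: on each unit interval $[k,k+1]$ the map has two linear pieces with slopes $\delta\ell_k/\ell_{2k}$ and $\delta\ell_k/\ell_{2k+1}$, which are distinct whenever $\ell_{2k}\neq\ell_{2k+1}$. At any such break point $x_0$ the quasisymmetric ratio of $F^{-1}$ (and of every $F^{-n}$) satisfies
\[
\lim_{t\to 0^+}\frac{|F^{-n}(x_0+t)-F^{-n}(x_0)|}{|F^{-n}(x_0)-F^{-n}(x_0-t)|}=\frac{\ell_{2k}}{\ell_{2k+1}}\neq 1,
\]
so $F^{-n}$ is not even symmetric, let alone uniformly symmetric. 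Thus your $F$ lies in neither $\mathcal{US}$ nor $\mathcal{C}^{1+}$ except in the trivial case $\Psi\equiv d$. The paragraph you label ``the main obstacle'' cannot repair this: no amount of bounded-nearby-geometry control on $\{\eta_n\}$ will make a genuinely piecewise linear map symmetric at its corners.

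What is missing is a non-affine extension of $\vartheta$ (equivalently, of $L$) compatible with the functional equation~(\ref{multiequation}). One route is to use $L(x)=\delta^{-1}L^d(\delta x)$ iteratively to refine $L$ from its integer values down through all dyadic (base-$\delta$) scales and show the limit exists and is symmetric; another is to bypass the linear model and construct the nested partitions $\{\eta_n\}$ directly from $\Psi$ (using, say, the tails $\cdots 000\,w_n^*$ to assign finite-level ratios), build the conjugacy $h$ to $q_d$ from those partitions, and then prove that continuity of $\Psi$ forces $h^{-1}q_d h\in\mathcal{US}$. Either way, the substantive work---and the reason the paper defers to~\cite{CuiJiangQuas,CuiGardinerJiang,Jiang7}---is precisely in producing a representative with the correct regularity, not merely one with the correct combinatorial interval ratios.
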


The proof of this theorem is technical so we will not include in
this paper. The reader who is interested in this topic can refer
to~~\cite{CuiJiangQuas,CuiGardinerJiang,Jiang7}.

\vspace*{10pt}
\begin{remark}
A similar result to the second half of Theorem~\ref{sfr} was
studied by Pinto and Sullivan in~\cite{PintoSullivan} for a
solenoid function. They introduced a matching condition for a
function on $\Sigma^{*}$ and proved that a H\"older continuous
function on $\Sigma^{*}$ is a solenoid function of an $f\in
{\mathcal C}^{1+}$ if and only if it satisfies the matching
condition. Furthermore, using some relation between the solenoid
function and the linear model for an $f\in{\mathcal U}{\mathcal
S}$, Cui proved in~\cite{Cui} that two uniformly symmetric circle
endomorphisms are symmetric conjugate if and only if they have the
same eigenvalues at the corresponding periodic points.
We would like to note that~\cite{Jiang7} contains a much easy and
straightforward understanding to these results.
\end{remark}

From Theorem~\ref{sfr}, we have the following representations for
the Teichm\"uller spaces when $d=2$:
$$
{\mathcal T}{\mathcal C}^{1+} =\{ \Psi^{*}(w^{*}) \;|\;
\Psi^{*}(w^{*}) \hbox{ is H\"older continuous and
satisfies~(\ref{summationcondition})
and~(\ref{compatibilitycondition})}\}
$$
and
$$
{\mathcal T}{\mathcal U}{\mathcal S} =\{ \Psi^{*}(w^{*}) \;|\;
\Psi^{*}(w^{*}) \hbox{ is continuous and
satisfies~(\ref{summationcondition})
and~(\ref{compatibilitycondition})}\}
$$

\section{The maximum distance and the Teichm\"uller distance.}

We have introduce a Teichm\"uller metric $d_{\mathcal T}(\cdot,\cdot)$
on ${\mathcal T}{\mathcal U}{\mathcal S}$ which is a complete metric. We also showed that
${\mathcal T}{\mathcal U}{\mathcal S}$ can be represented by continuous functions $\Psi^{*}(w^{*})$ on $\Sigma^{*}$.
For a function $\Psi^{*}(w^{*})$, we can define the maximum norm
$$
||\Psi^{*}|| =\max_{w^{*}\in \Sigma^{*}}|\Psi^{*}(w^{*})|.
$$
This gives a distance
$$
d_{max} (\Psi^{*}, \tilde{\Psi}^{*}) =|| \Psi^{*}-\tilde{\Psi}^{*}||
$$
on ${\mathcal T}{\mathcal U}{\mathcal S}$. We call it the maximum metric.
Since ${\mathcal T}{\mathcal U}{\mathcal S}$ contains all positive continuous functions satisfying (\ref{summationcondition})
and~(\ref{compatibilitycondition}) taking values in $(1, \infty)$. We have following theorems.

\vspace*{10pt}
\begin{theorem}~\label{unifcont}
The identity map
$$
id_{TM}: ({\mathcal T}{\mathcal U}{\mathcal S}, d_{\mathcal T}(\cdot,\cdot)) \to ({\mathcal T}{\mathcal U}{\mathcal S}, d_{\max}(\cdot,\cdot))
$$
is uniformly continuous.
\end{theorem}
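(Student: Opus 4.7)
The strategy is to compare $D^*(f)(w^*)$ and $D^*(g)(w^*)$ by exploiting the conjugacy $k = h_f \circ h_g^{-1}$, which satisfies $f \circ k = k \circ g$ and whose boundary dilatation is $B_k = e^{2 d_{\mathcal T}(\Pi, \Pi')}$. The key analytic tool is the deviation estimate of Lemma~\ref{qsdistortion}.

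First I will set up a pointwise comparison at every depth $n$. Since $k(1)=1$ and $k$ is orientation-preserving, the relation $f \circ k = k \circ g$ forces $k$ to carry the $g$-Markov partitions to the $f$-Markov partitions with matching labels, so $k(I_{w_n, g}) = I_{w_n, f}$ and $k(I_{v_{n-1}, g}) = I_{v_{n-1}, f}$. Let $Q_n:[0,1]\to[0,1]$ be the renormalization of $k|_{I_{v_{n-1}, g}}$ obtained by affinely identifying both $I_{v_{n-1}, g}$ and $I_{v_{n-1}, f}$ with $[0,1]$. Then $Q_n(0)=0$, $Q_n(1)=1$, and $Q_n$ has the same quasisymmetric constant $M_n$ as $k|_{I_{v_{n-1}, g}}$. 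Writing $[\alpha_g,\beta_g]$ for the normalized position of $I_{w_n, g}$ inside its parent (so $\beta_g-\alpha_g = 1/D^*(g)(w_n^*)$) and analogously $[\alpha_f,\beta_f]$ for $f$, and applying Lemma~\ref{qsdistortion} to $Q_n$, one obtains $|\alpha_g-\alpha_f|,\, |\beta_g-\beta_f|\leq \zeta(M_n)$, hence
\[
\Bigl| \frac{1}{D^*(f)(w_n^*)} - \frac{1}{D^*(g)(w_n^*)} \Bigr| \leq 2\zeta(M_n).
\]

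Second I will bound $M_n$, for $n$ sufficiently large, by a quantity $\eta(\delta)$ tending to $0$ as $\delta = d_{\mathcal T}(\Pi,\Pi')\to 0$. Since $B_k\leq e^{2\delta}$, there exists a quasiconformal extension $\tilde{k}$ of $k$ to an annular neighborhood $U_r=\{1-r<|z|<1+r\}$ of $T$ with $\|\mu_{\tilde{k}}\|_{\infty,U_r}$ arbitrarily close to $(e^{2\delta}-1)/(e^{2\delta}+1)$. Comparing $\tilde{k}$ to the identity via a quantitative form of Theorem~\ref{twoBAextension} yields a bound $\rho_k(x,y,t)\leq 1+\eta(\delta)$ for all $x\in\mathbb{R}$, $0<y\leq r$, and $0<t\leq 1$, where $\eta(\delta)\to 0$ as $\delta\to 0$; equivalently, $k|_I$ is $(1+\eta(\delta))$-quasisymmetric on every interval $I\subset T$ of length at most $r$. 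By Theorem~\ref{boundedgeometrythm}, there is $\tau_g<1$ with $|I_{v_{n-1},g}|\leq \tau_g^{n-1}$, so for $n$ sufficiently large $M_n\leq 1+\eta(\delta)$.

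Third I pass to the limit. By Theorem~\ref{dualderivative}, $D^*(f)(w_n^*)\to D^*(f)(w^*)$ and $D^*(g)(w_n^*)\to D^*(g)(w^*)$ as $n\to\infty$, so the first-step estimate survives the limit and
\[
\Bigl| \frac{1}{D^*(f)(w^*)} - \frac{1}{D^*(g)(w^*)} \Bigr| \leq 2\zeta\bigl(1+\eta(\delta)\bigr).
\]
Since $D^*(f),D^*(g)\geq 1$ and are bounded above by a constant $C$ coming from bounded nearby geometry (which remains uniform across any fixed Teichm\"uller ball centered at $\Pi$, because $\varepsilon(1)$ in Theorem~\ref{boundedgeometrythm} varies continuously with the pair), multiplying through gives
\[
|D^*(f)(w^*)-D^*(g)(w^*)| \leq 2C^2 \zeta\bigl(1+\eta(\delta)\bigr),
\]
uniformly in $w^*\in\Sigma^*$. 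Because $\zeta$ is continuous at $1$ with $\zeta(1)=0$ and $\eta(\delta)\to 0$, this produces a modulus of continuity independent of $\Pi,\Pi'$, establishing uniform continuity of $id_{TM}$.

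The main obstacle lies in the second paragraph: one needs a version of Theorem~\ref{twoBAextension} in which the vanishing function controlling $|\rho_0-\rho_1|$ depends explicitly and uniformly on the size of the Beltrami coefficient of the extension, not only on $y$. Tracking constants through the formulas for $a,b,c$ in~(\ref{rho}) and through the Gr\"otzsch argument there should yield such a quantitative statement, but the bookkeeping is the delicate step; the rest of the plan then follows almost mechanically from Lemma~\ref{qsdistortion}, Theorem~\ref{boundedgeometrythm}, and Theorem~\ref{dualderivative}.
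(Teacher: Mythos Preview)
Your approach is essentially the same as the paper's: use the conjugacy $h_g\circ h_f^{-1}$ (you take its inverse $k$, which is immaterial), observe that it carries one Markov partition to the other, renormalize $k$ restricted to a small parent interval to a self-map of $[0,1]$ fixing the endpoints, and apply Lemma~\ref{qsdistortion} to bound $\bigl|1/D^*(f)(w_n^*)-1/D^*(g)(w_n^*)\bigr|$ by $\zeta$ of the local quasisymmetric constant; then let $n\to\infty$.

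Two remarks. First, the ``main obstacle'' you flag is exactly what the paper dispatches in one sentence: from a $K(1+\epsilon)$-quasiconformal extension to an annulus one gets that $H$ is $(\delta,M)$-quasisymmetric with $M=M(K,\epsilon)\to 1$ as $K\to 1$, $\epsilon\to 0$. This is standard quasiconformal boundary-value theory and does not require the full bookkeeping of Theorem~\ref{twoBAextension}; you are simply being more explicit than the paper about a step it takes for granted. Second, you are right to worry about converting the reciprocal bound into a bound on $|D^*(f)-D^*(g)|$; the paper passes from the former to the latter without comment. Your justification (``$C$ remains uniform across any fixed Teichm\"uller ball'') only gives a modulus depending on the basepoint, i.e.\ continuity rather than \emph{uniform} continuity, since $\sup_{w^*} D^*(f)(w^*)$ is not bounded over all of $\mathcal{TUS}$. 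The paper's proof has the same loose end; what both arguments cleanly establish is uniform continuity into the max metric on the \emph{reciprocals} $1/D^*(f)$ (equivalently, on the scaling functions), which is enough for Corollary~\ref{sametop}.
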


\vspace*{10pt}
\begin{theorem}~\label{contonly}
The identity map
$$
id_{MT}: ({\mathcal T}{\mathcal U}{\mathcal S}, d_{\max}(\cdot,\cdot)) \to ({\mathcal T}{\mathcal U}{\mathcal S}, d_{\mathcal T}(\cdot,\cdot))
$$
is continuous.
\end{theorem}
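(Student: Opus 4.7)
The claim reduces to showing $B_{h_{n}} \to 1$, where $h_{n} := h_{f_{n}} \circ h_{f}^{-1}$ is the conjugacy satisfying $h_{n} \circ f = f_{n} \circ h_{n}$ between $f$ and $f_{n}$; this follows from the definition $d_{\mathcal T}([(f_{n}, h_{f_{n}})], [(f, h_{f})]) = \tfrac{1}{2}\log B_{h_{n}}$. Equivalently, given a sequence $\Psi^{*}_{n} := D^{*}(f_{n})$ converging uniformly on $\Sigma^{*}$ to $\Psi^{*} := D^{*}(f)$, one must, for each $\epsilon > 0$ and all sufficiently large $n$, exhibit a quasiconformal extension of the lift $H_{n}$ to some annular neighborhood of $\mathbb{R}$ whose Beltrami coefficient has $L^{\infty}$-norm at most $\epsilon$.

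My strategy is to apply the Beurling--Ahlfors procedure to $H_{n}$ and invoke Theorem~\ref{twoBAextension} with baseline $H_{0} = \mathrm{id}$ (for which $\rho_{H_{0}}(x, \pm y, \kappa) = \kappa$). That theorem controls $|\mu_{\tilde H_{n}}(x+iy)|$ by a vanishing function of $\sup_{x, \kappa}|\rho_{H_{n}}(x, \pm y, \kappa) - \kappa|$. Thus the task reduces to showing: for every $\epsilon > 0$, there exist $y_{0} > 0$ and $N$ such that $|\rho_{H_{n}}(x, \pm y, \kappa) - \kappa| \leq \epsilon$ whenever $n \geq N$, $0 < y \leq y_{0}$, $x \in \mathbb{R}$, and $0 < \kappa \leq 1$.

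To estimate $\rho_{H_{n}}$ at a given scale $y$, exploit that $h_{n}$ sends the Markov partition $\eta_{m, f}$ onto $\eta_{m, f_{n}}$, and both partitions enjoy bounded nearby geometry by Theorem~\ref{boundedgeometrythm}. Choosing $m$ so that intervals of $\eta_{m, f}$ near $x$ have length comparable to $y$ (possible since $\iota_{m, f} \to 0$), bounded nearby geometry sandwiches $\rho_{H_{n}}(x, y, \kappa)$, up to a universal constant, between quotients of the form
$$
\frac{|I_{w'_{m}, f_{n}}|/|I_{w_{m}, f_{n}}|}{|I_{w'_{m}, f}|/|I_{w_{m}, f}|}
$$
for adjacent intervals $I_{w_{m}}, I_{w'_{m}} \in \eta_{m, f}$ near $x$. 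Using the telescoping identity $|I_{w_{m}}|/|I_{v_{m-1}}| = 1/D^{*}(f)(w_{m}^{*})$, each such quotient collapses to a finite product of ratios $\Psi^{*}_{n}(w^{*})/\Psi^{*}(w^{*})$ evaluated on cylinders of depth at most $m$; each individual factor is $1 + O(\|\Psi^{*}_{n} - \Psi^{*}\|_{\infty})$ by uniform convergence.

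The principal obstacle is uniformity across scales: adjacent intervals at deep Markov ``junctions'' share a common ancestor only at shallow levels, so the telescoping product may contain order-$m$ factors, producing an \emph{a priori} error bound of order $m \,\|\Psi^{*}_{n} - \Psi^{*}\|_{\infty}$ that does not vanish as $m \to \infty$ for fixed $n$. I will circumvent this by a two-regime argument. Given a tolerance $\epsilon$, fix a threshold $y_{0} = y_{0}(\epsilon)$: on $[y_{0}, 1]$ only boundedly many partition levels contribute, so the direct estimate $C(y_{0})\|\Psi^{*}_{n} - \Psi^{*}\|_{\infty} \to 0$ suffices. On $(0, y_{0}]$, I exploit that the sequence $(\Psi^{*}_{n})$ is equicontinuous on the compact space $\Sigma^{*}$ (by Arzel\`a--Ascoli, as a uniformly convergent sequence of continuous functions), which, via the correspondence between the modulus of continuity of the dual derivative and the uniform symmetry modulus encoded in the proof of Theorem~\ref{dualderivative}, yields a common small-scale symmetry modulus $\varepsilon(y) \to 0$ for the family $\{f_{n}\}$, independent of $n$. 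Combining this with Lemma~\ref{qsdistortion} applied to the conjugacy relation $h_{n} \circ f = f_{n} \circ h_{n}$ produces the uniform control $|\rho_{H_{n}}(x, \pm y, \kappa) - \kappa| \leq \epsilon$ at small scales, whence $B_{h_{n}} \to 1$ and the identity map $\mathrm{id}_{MT}$ is continuous.
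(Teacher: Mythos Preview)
Your approach is direct whereas the paper argues by contradiction through compactness: assuming $d_{\max}(\Psi^*_m,\Psi^*)\to 0$ but $d_{\mathcal T}(\Pi_m,\Pi)\geq\epsilon$, it uses the uniform positive lower bound on $D^*(f_m)$ to obtain \emph{uniform} bounded geometry for the partitions $\{\eta_{n,m}\}_{n,m}$, hence a uniform quasisymmetry bound on the conjugacies $H_m$; this places $\{[(f_m,h_m)]\}$ in a compact set of $\mathcal{TUS}$, and any subsequential limit $[(f_0,h_0)]$ is shown---by interchanging a double limit, justified precisely by the uniform geometry---to satisfy $D^*(f_0)=D^*(f)$, contradicting the separation.

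The gap in your direct argument lies in the small-scale regime. You assert that equicontinuity of $(\Psi^*_n)$, ``via the correspondence \ldots\ encoded in the proof of Theorem~\ref{dualderivative},'' yields a common uniform-symmetry modulus for $\{f_n\}$. But Theorem~\ref{dualderivative} only proves the forward implication (uniform symmetry $\Rightarrow$ continuity of the dual derivative, with modulus control); the reverse quantitative direction is nowhere established in the paper, and indeed is not well-posed without specifying representatives, since $D^*(f_n)$ determines only the class $[f_n]$, whose members can have quite different symmetry moduli. Moreover, even granting a common symmetry modulus for $\{f_n\}$, this yields only a uniform quasisymmetry bound on the $H_n$ (via the argument of Corollary~\ref{quasisymmetriccojugacy}), not that $H_n$ is $(1+\epsilon)$-quasisymmetric at small scales; the latter requires the \emph{partitions} $\eta_{m,f_n}$ and $\eta_{m,f}$ to be close, which is exactly the issue your telescoping obstacle identified and which your small-scale fix does not resolve. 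The paper's compactness-plus-double-limit argument is what actually closes this loop, by converting uniform bounded geometry into the ability to pass to the limit in the pre-derivatives $D^*(f_m)(w_n^*)$ simultaneously in $m$ and $n$.
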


\vspace*{10pt}
\begin{corollary}~\label{sametop}
The topologies induced by the Techm\"uller metric $d_{\mathcal T}(\cdot,\cdot)$ and by the maximum metric $d_{\max}(\cdot,\cdot)$ are the same.
\end{corollary}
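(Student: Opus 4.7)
The plan is to deduce Corollary~\ref{sametop} immediately from Theorem~\ref{unifcont} and Theorem~\ref{contonly}. Two metrics on the same underlying set induce the same topology precisely when the identity map between the two metric spaces is a homeomorphism, i.e., continuous in both directions. Thus I would simply observe that Theorem~\ref{unifcont} gives the continuity of $id_{TM}$ (in fact uniform continuity, which is stronger than needed here), while Theorem~\ref{contonly} gives the continuity of $id_{MT}$. Together these say that the identity map on ${\mathcal T}{\mathcal U}{\mathcal S}$ is a homeomorphism between $({\mathcal T}{\mathcal U}{\mathcal S}, d_{\mathcal T})$ and $({\mathcal T}{\mathcal U}{\mathcal S}, d_{\max})$.

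Concretely, I would write: let $U \subset {\mathcal T}{\mathcal U}{\mathcal S}$. By the continuity of $id_{MT}$, if $U$ is open in the $d_{\mathcal T}$-topology, then $U = id_{MT}^{-1}(U)$ is open in the $d_{\max}$-topology. Conversely, by the continuity of $id_{TM}$, if $U$ is open in the $d_{\max}$-topology, then $U = id_{TM}^{-1}(U)$ is open in the $d_{\mathcal T}$-topology. Therefore the two collections of open sets coincide, which is exactly the statement that the two topologies are equal.

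There is no real obstacle at the level of this corollary; all of the content has been absorbed into the two preceding theorems, whose proofs are the substantive step. The corollary is purely a formal consequence, and the only thing to be careful about is not to overclaim: the two metrics induce the same topology, but this does not assert that they are uniformly equivalent or Lipschitz equivalent, since Theorem~\ref{contonly} only gives continuity of $id_{MT}$ rather than uniform continuity.
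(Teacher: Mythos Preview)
Your proposal is correct and matches the paper's approach exactly: the paper states Corollary~\ref{sametop} without proof, immediately after Theorems~\ref{unifcont} and~\ref{contonly}, treating it as an obvious formal consequence of the two-way continuity of the identity map. Your additional remark about not overclaiming uniform equivalence is apt but goes beyond what the paper says.
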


\begin{proof}[Proof of Theorem~\ref{unifcont}]
Suppose $\Pi, \Pi'\in {\mathcal T}{\mathcal U}{\mathcal S}$.
Suppose $K=\exp(2d_{{\mathcal T}}(\Pi,\Pi'))\geq 1$.
For any $\epsilon >0$, we have two marked circle endomorphisms $(f, h_{f})\in \Pi$ and $(g, h_{g})\in \Pi'$ such that
$$
h=h_{g}\circ h_{f}^{-1}: T\to T
$$
can be extended to a $K(1+\epsilon)$-quasiconformal map $\tilde{h}$ defined on an annulus $\{z\in {\mathbb C}\;|\; \frac{1}{r}< |z|<r\}$ for some $r>1$. This implies that there is a $\delta>0$ and $M =M (K, \epsilon)>0$ such that $M\to 1$ as $K\to 1$ and $\epsilon \to 0$ and such that
$H$, which is a lift of $h$, is a $(\delta, M)$-quasisymmetric, that is,
$$
M^{-1} \leq \frac{|H(y)-H(\frac{x+y}{2})|}{|H(\frac{x+y}{2})-H(x)|} \leq M
$$
for any $x,y$ with $|x-y|\leq \delta$. Note that $h\circ f=g\circ h$ and $H\circ F=G\circ H \pmod{1}$.

For any point $w^{*}= \cdots w_{n}^{*}\in \Sigma_{A}^{*}$, we have
that
$$
I_{w^{*}_{n}, f}\in \eta_{n,f} \quad \hbox{and}\quad
I_{\sigma^{*}(w^{*}_{n}), f}\in \eta_{n-1,f}
$$
and
$$
I_{w_{n}^{*}, g}=H(I_{w^{*}_{n}, g})\in \eta_{n,g}  \quad
\hbox{and}\quad I_{\sigma^{*}(w^{*}_{n}),
g}=H(I_{\sigma^{*}(w^{*}_{n}), g})\in \eta_{n-1,g}.
$$
Note that
$$
I_{w^{*}_{n}, f}\subset I_{\sigma^{*}(w^{*}_{n}), f}\quad
\hbox{and}\quad I_{w_{n}^{*}, g}\subset I_{\sigma^{*}(w^{*}_{n}),
g}.
$$

Let $n_{0}>0$ be an integer such that
$$
|I_{\sigma^{*}(w^{*}_{n}), f}|\leq \delta
$$
for all $n\geq n_{0}$. Then $H|I_{\sigma^{*}(w^{*}_{n}), f}$ is a
$M$-quasisymmetric homeomorphism.

By rescaling $I_{\sigma^{*}(w^{*}_{n}), f}$ and
$I_{\sigma^{*}(w^{*}_{n}), g}$ into the unit interval $[0,1]$ by
linear maps, we can think $H|I_{\sigma^{*}(w^{*}_{n}), f}$ is a
$M$-quasisymmetric homeomorphism of $[0,1]$ and fixes $0$ and
$1$. Then Lemma~\ref{qsdistortion} implies that
$$
\Big|\frac{1}{D^{*}(g)(w_{n}^{*})} - \frac{1}{D^{*}(f)(w_{n}^{*})}\Big| =\Big|
\frac{|H(I_{w^{*}_{n}, f})|}{|H(I_{\sigma^{*}(w^{*}_{n}), f})|}
-\frac{|I_{w^{*}_{n}, f}|}{|I_{\sigma^{*}(w^{*}_{n}), f}|}\Big|
\leq \zeta(M) .
$$
This implies that
$$
|D^{*}(g)(w^{*}) - D^{*}(f)(w^{*})| \leq \zeta(M).
$$
Therefore,
$$
d_{max} (D^{*}(g), D^{*}(f)) \leq \zeta (M (d_{T}(\Pi, \Pi')(1+\epsilon)).
$$
This proved the theorem.
\end{proof}

\begin{proof}[Proof of Theorem~\ref{contonly}]
Suppose $id_{MT}$ is not continuous.
That is, we have a real number $\epsilon>0$
and a point $\Psi^{*}$ and a sequence of points $\{ \Psi_{m}\}_{m=1}^{\infty}$
in the Teichm\"uller space ${\mathcal T}{\mathcal U}{\mathcal S}$ such that
$$
d_{\max} (\Psi^{*}_{m}, \Psi^{*})=\|\Psi^{*}_{m}-\Psi^{*}\| \to 0\quad \hbox{as}\quad m\to \infty
$$
but
$$
d_{{\mathcal T}} (\Pi^{*}_{m}, \Pi^{*})\geq \epsilon, \quad \forall\; m,
$$
where $\Pi$ and $\Pi_{m}$ are the corresponding points for $\Psi^{*}$ and $\Psi^{*}_{m}$.
Let $(f, h_{f})\in \Pi$ be a fixed representation
and $(f_{m}, h_{f_{m}})\in \Pi_{m}$ for each $m$ be
a representation.

Let $F$ and $F_{m}$ be the corresponding circle endomorphisms of the real line.
Let $\{\eta_{n}\}_{n=0}^{\infty}$ and $\{\eta_{m,n}\}_{n=0}^{\infty}$ be the corresponding
sequences of nested Markov partitions. Since $\|D^{*}(f_{m})-D^{*}(f)\| \to 0$ as $m\to \infty$,
we have a constant $a=a(D^{*}(f))>0$ such that $D^{*}(f_{m})(w^{*}) \geq a$ for sufficient large $m$
and all $w^{*}\in \Sigma^{*}$. Let us assume this true for all $m$.
Since $\Sigma^{*}$ is a compact set, we have that there is another
constant $b=b(a)>0$ such that $D^{*}(f_{m})(w_{n}^{*}) \geq b$ (see (\ref{prederivative}))
for all $m$ and all $n$. This says that the collection of the sequences
$\{\eta_{m,n}\}_{n=0}^{\infty}$ of nested Markov partitions
has uniformly bounded geometry. From a method in~\cite{Jiang6}, which is also shown in the proof of Corollary~\ref{quasisymmetriccojugacy}
and which gives a calculation of quasisymmetric dilatation
from bounded geometry, we have a constant $M>0$ such that the quasisymmetric dilatations of all $H_{m}$ are less than or equal to $M$, that is,
Since
$$
QS_{m} =\sup_{x\in {\mathbb R}, t>0} \frac{|H_{m}(x+t)-H_{m}(x)|}{|H_{m}(x)-H_{m}(x-t)|} \leq M.
$$
for every $m>0$. This says that by modulo all M\"obius transformations, $\{H_{m}\}_{m}$ is a compact subset in the universal Teich\"uller space.
Thus $\{[(f_{m}, h_{m})]\}_{m=1}^{\infty}$ is a compact set in ${\mathcal T}{\mathcal U}{\mathcal S}$. So
it has a convergent subsequence. Let us assume that $\{[(f_{m}, h_{m})]\}_{m=1}^{\infty}$
itself is convergent and converges to $[(f_{0}, h_{0})]$ as $m\to \infty$. Let $H_{0}$ be a lift of $h_{0}$. Then $H_{m}$ tends to $H_{0}$ modulo all M\"obius transformations under the maximal norm on the real line. Assume $H_{m}\to H_{0}$ uniformly on the real line as $m\to \infty$.

For any $w_{n}^{*}$, let $I_{w_{n}^{*}, f_{m}}\in \eta_{m,n}$
and $I_{w_{m}^{*}, f_{0}}\in \eta_{n,f_{0}}$. We have that $|I_{w_{n}^{*}, f_{m}}|\to |I_{w_{m}^{*}, f_{0}}|$
as $m\to \infty$ for each fixed $n$ and $w_{n}^{*}$.

Since the sequences $\{\eta_{m,n}\}_{n=0}^{\infty}$ of nested Markov partitions have
uniformly bounded geometry, this again says that there are constants $C=C(S)>0$ and $0<\mu=\mu (D^{*}(f))<1$
such that $\nu_{n,m}\leq C\mu^{n}$ for all $n$ and $m$,
where
$$
\nu_{n,m} =\max_{I\in \eta_{n,m}} |I|.
$$
This implies that $D^{*}(f_{m})(w_{n}^{*}) \to D^{*}(f_{m})(w^{*})$ and $D^{*}(f_{0})(w^{*}_{n}) \to D^{*}(f_{0})(w^{*})$
as $n\to \infty$ uniformly on $m\geq 1$ and $w^{*}\in \Sigma^{*}$. Thus we can change double
limits for each $w^{*}\in \Sigma^{*}$,
$$
D^{*}(f) (w^{*}) =\lim_{m\to \infty} D^{*}(f_{m})(w^{*}) = \lim_{m\to \infty} \lim_{n\to \infty} D^{*}(f_{m})(w_{n}^{*})
$$
$$
=\lim_{n\to \infty}\lim_{m\to \infty} D^{*}(f_{m})(w_{n}^{*}) =\lim_{n\to \infty} D^{*}(f_{0})(w_{n}^{*}) = D^{*}(f_{0})(w^{*}).
$$
From Corollary~\ref{same1}, this implies that $[(f,h)]=[(f_{0},h_{0})]=\Pi$.
This is a contradiction to our original assumption. The contradiction says that
$id_{MT}$ is continuous at each point $\Psi^{*}$.
\end{proof}

\section{$\sigma$-invariant measures and dual $\sigma^{*}$-invariant measures}

Consider the symbolic dynamical system $(\Sigma, \sigma)$ and a
positive H\"older continuous function $\psi (w)$. The standard
Gibbs theory (refer to~\cite{Bowen,Ruelle1,Ruelle2,
Sinai1,Sinai2}) implies that there is a number $P=P(\log \psi)$
called the pressure and a $\sigma$-invariant probability measure
$\mu= \mu_{\psi}$ such that
$$
C^{-1} \leq \frac{\mu ([w_{n}])}{\exp (-Pn + \sum_{i=0}^{n-1} \log
\psi (\sigma^{i} (w)))}\leq C
$$
for any left cylinder $[w_{n}]$ and any point $w=w_{n}\cdots$,
where $C$ is a fixed constant. Here, $\mu$ is a $\sigma$-invariant
probability measure means that
$$
\mu (\sigma^{-1} (A)) =\mu (A)
$$
for all Borel sets of $\Sigma$. A $\sigma$-invariant probability
measure satisfying the above inequalities is called the Gibbs
measure with respect to the given potential $\log \psi$.

Two positive H\"older continuous functions $\psi_{1}$ and
$\psi_{2}$ are said to be cohomologous equivalent if there is a
continuous function $u=u(w)$ on $\Sigma$ such that
$$
\log \psi_{1}(w) -\log \psi_{2} (w) = u (\sigma (w)) -u(w).
$$
If two functions are cohomologous to each other, they have the
same Gibbs measure. Therefore, the Gibbs measure can be thought of
as a representation of a cohomologous class.

The Gibbs measure $\mu$ for a given potential $\log \phi$ is also
an equilibrium state for this potential as follows. Consider the
measure-theoretical entropy $h_{\mu}(\sigma)$. Since the Borel
$\sigma$-algebra of $\Sigma$ is generated by all left cylinders,
then $h_{\mu} (\sigma) $ can be calculated as
$$
h_{\mu} (\sigma)  =\lim_{n\to \infty} \frac{1}{n} \sum_{w_{n}}
\Big( -\mu ([w_{n}]) \log \mu ([w_{n}])\Big)
$$
$$
= \lim_{n\to \infty} \sum_{w_{n}} \Big( -\mu ([w_{n}]) \log
\frac{\mu( [w_{n}])}{\mu(\sigma([w_{n}]))}\Big),
$$
where $w_{n}$ runs over all words $w_{n}=i_{0}\cdots i_{n-1}$ of
$\{ 0, 1,\cdots, d-1\}$ of length $n$. Then $\mu$ is an
equilibrium state in the sense that
$$
P(\log \psi) = h_{\mu} (\sigma) + \int_{\Sigma} \log \psi (w) d\mu
(w) =\sup \{ h_{\nu}(\sigma) +\int_{\Sigma} \log \psi (w)
d\nu(w)\},
$$
where $\nu$ runs over all $\sigma$-invariant probability measures.
The measure $\mu$ is unique in this case.

There is a natural way to transfer a $\sigma$-invariant
probability measure $\mu$ (not necessarily a Gibbs measure) to a
$\sigma^{*}$-invariant probability measure $\mu^{*}$ as follows:

Given any right cylinder $[w_{n}^{*}]$ in $\Sigma^{*}$ where
$w^{*}_{n}=j_{n-1}\cdots j_{0}$, then
$$
w_{n}=i_{0}\cdots i_{n-1}=j_{n-1}\cdots j_{0}=w_{n}^{*},
$$
which defines a left cylinder
$$
[w_{n}] =\{ w'=i_{0}'\cdots i_{n-1}'i_{n}'\cdots \; |\;
i_{0}'=i_{0}, \cdots, i_{n-1}'=i_{n-1}\}.
$$
Define
$$
\mu^{*} ([w_{n}^{*}])= \mu([w_{n}]).
$$
Then
$$
\mu^{*} ([w_{n}^{*}])= \mu([w_{n}])= \mu(\sigma^{-1}([w_{n}]))
$$
$$
=\mu (\cup_{i=0}^{d-1} [iw_{n}]) = \sum_{i=0}^{d-1} \mu ([iw_{n}])
=\sum_{j=0}^{d-1} \mu^{*}([(jw_{n})^{*}]).
$$
This implies that $\mu^{*}$ satisfies the finite additive law for
all cylinders, i.e., if $A_{1}$, $\cdots$, $A_{k}$ are finitely
many pairwise disjoint right cylinders in $\Sigma^{*}$, then
$$
\mu^{*} (\cup_{l=1}^{k} A_{k}) =\sum_{l=1}^{k} \mu^{*} (A_{l}).
$$
Also $\mu^{*}$ satisfies the continuity law in the sense that if
$\{ A_{n}\}_{n=1}^{\infty}$ is a sequence of decreasing cylinders
and tends to the empty set, then $\mu^{*} (A_{n})$ tends to zero
as $n$ goes to $\infty$. The reason is that since a cylinder of
$\Sigma^{*}$ is a compact set, a sequence of decreasing cylinders
tending to the empty set must be eventually all empty. The Borel
$\sigma$-algebra in $\Sigma^{*}$ is generated by all right
cylinders. So $\mu^{*}$ extends to a measure on $\Sigma^{*}$. We
have the following proposition.

\begin{proposition}
The probability measure $\mu^{*}$ is a $\sigma^{*}$-invariant
probability measure.
\end{proposition}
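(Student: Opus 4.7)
The plan is to reduce $\sigma^{*}$-invariance to a check on a generating $\pi$-system, namely the right cylinders, and then perform a small bookkeeping computation using the defining identity $\mu^{*}([w_{n}^{*}])=\mu([w_{n}])$.

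First, since the Borel $\sigma$-algebra on $\Sigma^{*}$ is generated by the right cylinders and these form a $\pi$-system closed under finite intersections, by the uniqueness part of Carath\'eodory extension (equivalently, the $\pi$-$\lambda$ theorem) it suffices to verify
$$
\mu^{*}\bigl((\sigma^{*})^{-1}[w_{n}^{*}]\bigr)=\mu^{*}([w_{n}^{*}])
$$
for every right cylinder $[w_{n}^{*}]$, say with $w_{n}^{*}=j_{n-1}\cdots j_{0}$.

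Second, I compute the preimage. For $v^{*}=\cdots v_{1}v_{0}\in\Sigma^{*}$ one has $\sigma^{*}(v^{*})=\cdots v_{2}v_{1}$, so $\sigma^{*}(v^{*})\in[w_{n}^{*}]$ if and only if $v_{1}=j_{0},\,v_{2}=j_{1},\ldots,v_{n}=j_{n-1}$, while $v_{0}$ is unconstrained. Thus
$$
(\sigma^{*})^{-1}[w_{n}^{*}]=\bigsqcup_{k=0}^{d-1}[j_{n-1}\cdots j_{0}k],
$$
a disjoint union of $d$ right cylinders of length $n+1$, the $k$-th one obtained by additionally prescribing the rightmost symbol to be $k$. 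Under the correspondence that defines $\mu^{*}$, each such right cylinder pairs with the left cylinder in $\Sigma$ whose first $n$ symbols reproduce $w_{n}$ and whose $(n+1)$-st symbol is $k$. As $k$ ranges over $\{0,\ldots,d-1\}$, these left cylinders partition $[w_{n}]$, so
$$
\mu^{*}\bigl((\sigma^{*})^{-1}[w_{n}^{*}]\bigr)=\sum_{k=0}^{d-1}\mu([j_{n-1}\cdots j_{0}k])=\mu([w_{n}])=\mu^{*}([w_{n}^{*}]),
$$
which is the desired equality.

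There is no substantive obstacle here; the only point needing care is the bookkeeping of which appended symbol occupies which position, because the right-cylinder convention in $\Sigma^{*}$ and the left-cylinder convention in $\Sigma$ attach the extra symbol at opposite ends of the word that labels the cylinder. I note in passing that this step itself does not invoke the $\sigma$-invariance of $\mu$; that hypothesis was already consumed in the paragraph preceding the proposition to establish the finite additive law that makes $\mu^{*}$ consistently defined as a set function on cylinders, after which countable additivity (continuity) gives a genuine Borel measure to which the calculation above applies.
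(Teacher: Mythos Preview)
Your proof is correct and follows essentially the same approach as the paper: compute $(\sigma^{*})^{-1}[w_{n}^{*}]=\bigcup_{k}[w_{n}^{*}k]$, pass to the corresponding left cylinders $[w_{n}k]$, and use finite additivity to recover $\mu([w_{n}])$. You add a bit of extra rigor by invoking the $\pi$--$\lambda$ theorem to justify checking only on cylinders, and your closing remark that $\sigma$-invariance of $\mu$ is used only in the preceding well-definedness step (not in the invariance computation itself) is a correct and worthwhile observation.
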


\begin{proof} We have seen that $\mu^{*}$ is a measure on
$\Sigma^{*} $. Since $\mu^{*}(\Sigma^{*})=1$, it is a probability
measure. For any right cylinder $[w_{n}^{*}]$,
$$
\mu^{*} ((\sigma^{*})^{-1} ([w_{n}^{*}]) = \mu^{*}
(\cup_{j=0}^{d-1} [w_{n}^{*}j]) =\sum_{j=0}^{d-1} \mu^{*}
([w_{n}^{*}j]) = \sum_{i=0}^{d-1} \mu ([w_{n}i])
$$
$$
= \mu (\cup_{i=0}^{d-1} [w_{n}i]) = \mu ([w_{n}])
=\mu^{*}([w_{n}^{*}]).
$$
So $\mu^{*}$ is $\sigma^{*}$-invariant. We proved the proposition.
\end{proof}

We call $\mu^{*}$ a dual $\sigma^{*}$-invariant probability
measure. A natural question now is as follows.

\vspace*{10pt}
\begin{question}
Is a dual invariant probability measure a Gibbs measure with
respect to some continuous or H\"older continuous potential on
$\Sigma^{*}$?
\end{question}

Some more interesting geometric questions from the Teichm\"uller
point of view are the followings. Consider a metric induced from
the dual probability invariant measure $\mu^{*}$ (in the case that
$\mu^{*}$ is supported on the whole $\Sigma^{*}$ and has no atomic
point), that is,
$$
d(w^{*}, \tilde{w}^{*}) =\mu^{*}([w_{n}^{*}])
$$
where $[w_{n}^{*}]$ is the smallest right cylinder containing both
$w^{*}$ and $\tilde{w}^{*}$.

\vspace*{10pt}
\begin{question}
Is $\sigma^{*}$ differentiable under the metric $d(\cdot, \cdot)$?
More precisely, does the limit
$$
\frac{d\sigma^{*}}{dw^{*}} (w^{*})= \lim_{n\to \infty}
\frac{\mu^{*}(\sigma^{*}([w_{n}^{*}]))}{\mu^{*}([w_{n}^{*}])}
$$
exists for every $w^{*}=\cdots w_{n}^{*} \in \Sigma^{*}$? If it
exists, is the limiting function continuous or H\"older continuous
on $\Sigma^{*}$?
\end{question}

\vspace*{10pt}
\begin{question}
Given a positive continuous or H\"older continuous function
$\psi^{*} (w^{*})$ on $\Sigma^{*}$. Can we find a
$\sigma^{*}$-invariant measure $\mu^{*}$ on $\Sigma^{*}$, such
that the right shift map $\sigma^{*}$ under the metric $d(\cdot,
\cdot)$ induced from this measure is $C^{1}$ with the derivative
$\psi^{*} (w^{*})$?
\end{question}

Actually, there is a measure-theoretical version related to these
questions. I will first give a review of this theory.

\section{$g$-measures}

Let $X=\Sigma^{*}$ (or $\Sigma$) and let $f$ be $\sigma^{*}$ (or
$\sigma$). Let ${\mathcal B}$ be the Borel $\sigma$-algebra of
$X$. Let ${\mathcal M}(X)$ be the space of all finite Borel
measures on $X$. Let ${\mathcal M}(X, f)$ be the space of all
$f$-invariant probability measures in ${\mathcal M}(X)$. Let
${\mathcal C} (X)$ be the space of all continuous real functions
on $X$. Then ${\mathcal M}(X)$ is the dual space of ${\mathcal
C}(X)$. Denote
$$
<\phi, \mu> =\int_{X} \phi (x) d\mu, \quad \hbox{$\phi \in
{\mathcal C}(X)$ and $\mu\in {\mathcal M}(X)$}.
$$

A real non-negative continuous function $\psi$ on $X$ is called a
$g$-function~\cite{Keane} if
$$
\sum_{fy=x} \psi(y) =1.
$$

For a $g$-function $\psi (x)$, define the transfer operator
${\mathcal L}_{\psi}$ from ${\mathcal C}(X)$ into itself as
$$
{\mathcal L}_{\psi} \phi (x) =\sum_{f(y)=x} \phi (y) \psi(y),
\quad \phi \in {\mathcal C}(X).
$$
One can check that ${\mathcal L}_{\psi} \phi = {\mathcal L}_{1}
(\psi \phi)$ and if $\psi$ is a $g$-function, then ${\mathcal
L}_{\psi}1=1$. Let ${\mathcal L}_{\psi}^{*} $ be the dual operator
of ${\mathcal L}_{\psi}$, that is, ${\mathcal L}_{\psi}^{*}$ is
the operator from ${\mathcal M}(X)$ into itself satisfying
$$
<\phi, {\mathcal L}_{\psi}^{*}\mu> =<{\mathcal L}_{\psi}\phi,
\mu>, \quad \forall\; \phi \in {\mathcal C}(X) \;\; \hbox{and}\;\;
\forall\; \mu \in {\mathcal M}(X).
$$

\vspace*{10pt}
\begin{definition}~\label{gmeasure}
Suppose $\psi$ is a $g$-function. Then a probability measure
$\mu\in {\mathcal M}(X)$ is called a $g$-measure for $\psi$ if it
is a fixed point of ${\mathcal L}_{\psi}^{*}$, that is,
$$
{\mathcal L}_{\psi}^{*} \mu =\mu.
$$
\end{definition}

\begin{lemma}
Suppose $\psi$ is a $g$-function. Then any $g$-measure $\mu$ for
$\phi$ is an $f$-invariant measure.
\end{lemma}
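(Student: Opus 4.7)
The plan is to verify $f$-invariance by testing against continuous functions and exploiting the key identity $\mathcal{L}_\psi(\phi \circ f) = \phi$ that holds whenever $\psi$ is a $g$-function. Since ${\mathcal M}(X)$ is the dual of ${\mathcal C}(X)$ and the Borel $\sigma$-algebra of $X$ is generated by the cylinders on which continuous functions separate points, it suffices to show that $\int_X \phi \circ f \, d\mu = \int_X \phi \, d\mu$ for every $\phi \in {\mathcal C}(X)$.

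First I would observe the pointwise computation
$$
{\mathcal L}_\psi (\phi \circ f)(x) = \sum_{f(y)=x} \phi(f(y))\,\psi(y) = \phi(x) \sum_{f(y)=x} \psi(y) = \phi(x),
$$
where the last equality uses the defining property $\sum_{f(y)=x}\psi(y) = 1$ of a $g$-function. Thus ${\mathcal L}_\psi(\phi \circ f) = \phi$ as elements of ${\mathcal C}(X)$.

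Next I would combine this with the hypothesis ${\mathcal L}_\psi^* \mu = \mu$ using the duality pairing:
$$
\langle \phi \circ f, \mu \rangle = \langle \phi \circ f, {\mathcal L}_\psi^* \mu\rangle = \langle {\mathcal L}_\psi(\phi \circ f), \mu\rangle = \langle \phi, \mu\rangle.
$$
This says $\int_X \phi \circ f \, d\mu = \int_X \phi \, d\mu$ for every $\phi \in {\mathcal C}(X)$. A standard approximation argument (for instance, approximating the indicator function of a Borel set $A$ in $L^1(\mu)$ by continuous functions, or appealing directly to the uniqueness of the Riesz representation) then upgrades this to $\mu(f^{-1}(A)) = \mu(A)$ for all $A \in {\mathcal B}$, which is the definition of $f$-invariance.

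There is no real obstacle here; the only subtle point is that the identity ${\mathcal L}_\psi(\phi \circ f) = \phi$ requires $\sum_{f(y)=x}\psi(y)$ to equal $1$ \emph{identically} (not merely $\mu$-a.e.), which is guaranteed by the definition of a $g$-function. Everything else is just unwinding the definitions of ${\mathcal L}_\psi^*$ and the duality pairing.
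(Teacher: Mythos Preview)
Your proof is correct and follows essentially the same approach as the paper: both rely on the identity ${\mathcal L}_\psi(\phi\circ f)=\phi$ coming from the $g$-function condition, combined with the duality ${\mathcal L}_\psi^*\mu=\mu$. The only cosmetic difference is that the paper applies this directly to indicator functions $1_B\circ f$ (implicitly extending the pairing beyond ${\mathcal C}(X)$), whereas you work with continuous $\phi$ first and then approximate; the substance is identical.
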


\begin{proof}
For any Borel set $B\in {\mathcal B}$,
$$
\mu (f^{-1}(B)) = <1_{f^{-1}(B)}, \mu>= < 1_{B}\circ f, {\mathcal
L}_{\psi}^{*} \mu>
$$
$$
= <{\mathcal L}_{\psi} 1_{B}\circ f, \mu>=<1_{B}, \mu>=\mu (B).
$$
So $\mu$ is $f$-invariant.
\end{proof}

For any $\mu\in {\mathcal M}(X)$, let $\tilde{\mu} ={\mathcal
L}_{1}^{*}\mu$.

\vskip5pt
\begin{lemma}
$$
\tilde{\mu} (B) = \sum_{j=0}^{d-1} \mu (f(B\cap [j])),
$$
where $B$ is any Borel subset in ${\mathcal B}$ and $[j]$ is the
right (or left) cylinder of $j$. Moreover, if $\mu\in {\mathcal
M}(X,f)$, $\mu$ is absolutely continuous with respect to
$\tilde{\mu}$.
\end{lemma}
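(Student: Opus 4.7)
The plan is to unwind the definition of the dual operator ${\mathcal L}_1^{*}$ and then use the fact that the shift $f$ (either $\sigma$ or $\sigma^{*}$) restricted to any single cylinder $[j]$ is a homeomorphism onto $X$. First I would write, for any Borel $B\in{\mathcal B}$,
$$
\tilde{\mu}(B)=\langle 1_B,{\mathcal L}_1^{*}\mu\rangle=\langle {\mathcal L}_1 1_B,\mu\rangle=\int_X{\mathcal L}_1 1_B\,d\mu,
$$
and reduce the proof to a pointwise identification of ${\mathcal L}_1 1_B$.

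The next step is to exploit the geometry of the cylinders. Every $x\in X$ has exactly $d$ preimages under $f$, one in each cylinder $[j]$, $j=0,1,\ldots,d-1$; denote the unique preimage in $[j]$ by $y_j(x)$. Then $f\vert_{[j]}:[j]\to X$ is a measurable bijection (in fact a homeomorphism), so $f(B\cap[j])$ is a Borel subset of $X$, and
$$
1_B(y_j(x))=1_{B\cap[j]}(y_j(x))=1_{f(B\cap[j])}(x).
$$
Summing over $j$ gives
$$
{\mathcal L}_1 1_B(x)=\sum_{f(y)=x}1_B(y)=\sum_{j=0}^{d-1}1_B(y_j(x))=\sum_{j=0}^{d-1}1_{f(B\cap[j])}(x).
$$
Integrating against $\mu$ and using linearity yields the claimed formula
$$
\tilde{\mu}(B)=\sum_{j=0}^{d-1}\mu(f(B\cap[j])).
$$

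For the second statement, suppose $\mu\in{\mathcal M}(X,f)$ and $\tilde{\mu}(B)=0$. Since each summand $\mu(f(B\cap[j]))$ is non-negative, each vanishes. For every $j$ we have $B\cap[j]\subseteq f^{-1}(f(B\cap[j]))$; by $f$-invariance of $\mu$,
$$
\mu(B\cap[j])\leq\mu\bigl(f^{-1}(f(B\cap[j]))\bigr)=\mu(f(B\cap[j]))=0.
$$
Finally, since the cylinders $[0],[1],\ldots,[d-1]$ partition $X$,
$$
\mu(B)=\sum_{j=0}^{d-1}\mu(B\cap[j])=0,
$$
and hence $\mu\ll\tilde{\mu}$.

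The only mildly delicate point is the measurability of $f(B\cap[j])$, which is the reason the formula cannot be written for arbitrary factor maps; here it is handled by the observation that the full shift restricts to a homeomorphism on each cylinder $[j]$. Everything else is a direct manipulation of the transfer operator and its dual.
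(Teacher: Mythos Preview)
Your proof is correct and follows essentially the same approach as the paper: both unwind the duality $\tilde{\mu}(B)=\langle\mathcal{L}_1 1_B,\mu\rangle$ and identify $\mathcal{L}_1 1_B=\sum_j 1_{f(B\cap[j])}$ using that $f|_{[j]}$ is a bijection onto $X$. For absolute continuity, the paper packages the same inequality $\mu(B\cap[j])\leq\mu(f^{-1}(f(B\cap[j])))=\mu(f(B\cap[j]))$ into the single pointwise bound $\mu(B)\leq\tilde{\mu}(B)$ for all Borel $B$, whereas you invoke it only on null sets; the difference is cosmetic.
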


\vskip5pt

\begin{proof}
For any Borel subset $B\in {\mathcal B}$,
$$
\tilde{\mu} (B) = <1_{B}, {\mathcal L}_{1}^{*}\mu> = <{\mathcal
L}_{1} 1_{B}, \mu>.
$$
But
$$
{\mathcal L}_{1} 1_{B} (x) = \sum_{j=0}^{d-1} 1_{B}(xj) =
\sum_{j=0}^{d-1} 1_{f(B\cap [j])} (x).
$$
So we have that
$$
\tilde{\mu} (B) = \sum_{j=0}^{d-1} \mu (f(B\cap [j])).
$$

If $\mu$ is $f$-invariant, then we have that
$$
\tilde{\mu} (B) = \sum_{j=0}^{d-1} \mu (f(B\cap [j])) =
\sum_{j=0}^{d-1} \mu (f^{-1}(f(B\cap [j])))\geq \sum_{j=0}^{d-1}
\mu (B\cap [j]) =\mu (B).
$$
Therefore, $\mu(B)=0$ whenever $\tilde{\mu}(B) =0$. So $\mu$ is
absolutely continuous with respect to $\tilde{\mu}$.
\end{proof}

Suppose $\mu\in {\mathcal M}(X,f)$. Then $\mu$ is absolutely
continuous with respect to $\tilde{\mu}$. So the Radon-Nikod\'ym
derivative
$$
D_{\mu}(x)=\frac{d\mu} {d\tilde{\mu}} (x), \quad
\tilde{\mu}-a.e.\; x
$$
of $\mu$ with respect to $\tilde{\mu}$ exists $\tilde{\mu}$-a.e.
and is a $\tilde{\mu}$-measurable function.

The following theorem is in Leddrapier's paper~\cite{Ledrappier}
and is used in Walters' paper~\cite{Walters} for the study of a
generalized version of Ruelle's theorem. We give a complete proof
here.

\vskip5pt
\begin{theorem}
Suppose $\psi\geq 0$ is a $g$-function and $\mu\in {\mathcal
M}(X)$ is a probability measure. The followings are equivalent:
\begin{itemize}
\item[i)] $\mu$ is a $g$-measure for $\psi$, i.e., ${\mathcal L}_{\psi}^{*}\mu
=\mu$.
\item[ii)] $\mu\in {\mathcal M}(X,f)$ and $D_{\mu} (x) =\psi(x)$ for
$\tilde{\mu}$-a.e. $x$.
\item[iii)] $\mu\in {\mathcal M}(X,f)$ and
$$
E[\phi | f^{-1}({\mathcal B})] (x) = {\mathcal L}_{\psi} \phi
(fx)=\sum_{fy=fx} \psi (y) \phi(y), \; \hbox{for $\mu$-a.e.\; x},
$$
where $E[\phi|f^{-1}({\mathcal B})]$ is the conditional
expectation of $\phi$ with respect to $f^{-1}({\mathcal B})$.
\item[iv)] $\mu\in {\mathcal M}(X, f)$ and is an equilibrium state
for the potential $\log \psi$ with the meaning that
$$
0=h_{\mu}(f) +\int_{X} \log \psi\; d\mu = \sup \{ h_{\nu}(f) +
\int_{X} \log \psi\; d\nu \; |\; \nu \in {\mathcal M}(X,f)\}.
$$
(Note that the pressure $P(\log \psi)=0$ for a $g$-function
$\psi$.)
\end{itemize}
\end{theorem}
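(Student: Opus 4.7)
My plan is to prove the equivalences as the cycle (i)$\Leftrightarrow$(ii), (i)$\Leftrightarrow$(iii), (iii)$\Leftrightarrow$(iv); the $f$-invariance statements in (ii), (iii), (iv) come from (i) via the lemma already proved. For (i)$\Rightarrow$(ii), I would use duality: for any Borel set $B$,
$$
\mu(B) = \langle 1_B, \mu\rangle = \langle 1_B, {\mathcal L}_\psi^{*}\mu\rangle = \langle {\mathcal L}_\psi 1_B, \mu\rangle = \langle {\mathcal L}_1(\psi 1_B), \mu\rangle = \langle \psi 1_B, \tilde\mu\rangle = \int_B \psi\, d\tilde\mu,
$$
using ${\mathcal L}_\psi\phi = {\mathcal L}_1(\psi\phi)$ and $\tilde\mu = {\mathcal L}_1^{*}\mu$. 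This identifies $D_\mu = \psi$ $\tilde\mu$-a.e. The converse reverses this computation to obtain $\langle \phi,\mu\rangle = \langle {\mathcal L}_\psi\phi,\mu\rangle$ for every $\phi \in {\mathcal C}(X)$, which is exactly ${\mathcal L}_\psi^{*}\mu = \mu$.

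For (i)$\Rightarrow$(iii), the function $x\mapsto {\mathcal L}_\psi\phi(fx)$ is automatically $f^{-1}({\mathcal B})$-measurable, so it suffices to check the defining identity of conditional expectation: for every $B\in{\mathcal B}$,
$$
\int_{f^{-1}(B)}\phi\, d\mu = \int_{f^{-1}(B)}{\mathcal L}_\psi\phi(fx)\, d\mu(x).
$$
The right side equals $\int_B {\mathcal L}_\psi\phi\, d\mu$ by $f$-invariance. For the left side I would use the pointwise identity ${\mathcal L}_\psi((1_B\circ f)\phi) = 1_B\cdot{\mathcal L}_\psi\phi$ (immediate from the definitions of ${\mathcal L}_\psi$) together with ${\mathcal L}_\psi^{*}\mu = \mu$ to rewrite
$$
\int_{f^{-1}(B)}\phi\, d\mu = \langle (1_B\circ f)\phi, \mu\rangle = \langle{\mathcal L}_\psi((1_B\circ f)\phi),\mu\rangle = \langle 1_B\cdot{\mathcal L}_\psi\phi,\mu\rangle.
$$
Both sides match. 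For (iii)$\Rightarrow$(i), I set $B = X$ to get $\int\phi\, d\mu = \int{\mathcal L}_\psi\phi\, d\mu$ for all $\phi$, which means ${\mathcal L}_\psi^{*}\mu = \mu$.

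For (iii)$\Leftrightarrow$(iv), I would use the generating partition $\xi = \{[0],\ldots,[d-1]\}$, for which $h_\nu(f) = H_\nu(\xi\mid f^{-1}{\mathcal B})$ for any $\nu\in{\mathcal M}(X,f)$. Writing $G_{j,\nu}(fx) := E_\nu[1_{[j]}\mid f^{-1}{\mathcal B}](x)$ and using that each $y$ has exactly $d$ preimages $z_0(y),\ldots,z_{d-1}(y)$ with $z_j(y)\in [j]$, the conditional distribution of $x$ given $fx = y$ assigns mass $G_{j,\nu}(y)$ to $z_j(y)$, and $\sum_j G_{j,\nu}(y) = 1$. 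Pushing down to $y$ via $f$-invariance,
$$
h_\nu(f) + \int \log\psi\, d\nu = \sum_j \int G_{j,\nu}(y)\log\frac{\psi(z_j(y))}{G_{j,\nu}(y)}\, d\nu(y).
$$
The $g$-function condition gives $\sum_j \psi(z_j(y)) = 1$, so Jensen's inequality applied fiberwise yields
$$
\sum_j G_{j,\nu}(y)\log\frac{\psi(z_j(y))}{G_{j,\nu}(y)} \leq \log\sum_j \psi(z_j(y)) = 0,
$$
with equality iff $G_{j,\nu}(y) = \psi(z_j(y))$ for $\nu$-a.e. $y$ and all $j$. This last condition is precisely (iii) for $\mu = \nu$. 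Hence (iii) implies the sum equals $0$ and (iv) holds; conversely, any $\mu\in{\mathcal M}(X,f)$ attaining the supremum must realize equality in Jensen, which forces (iii).

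The main obstacle is the (iii)$\Leftrightarrow$(iv) step: one must justify the disintegration of $\nu$ along the fibers of $f$ and correctly reduce the entropy formula to a fiberwise Jensen application. The other three equivalences are direct manipulations of the duality $\langle {\mathcal L}_\psi\phi,\mu\rangle = \langle \phi, {\mathcal L}_\psi^{*}\mu\rangle$ and the identity ${\mathcal L}_\psi\phi = {\mathcal L}_1(\psi\phi)$.
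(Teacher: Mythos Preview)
Your argument is correct. The equivalences (i)$\Leftrightarrow$(ii) and (i)$\Leftrightarrow$(iii) coincide with the paper's proof essentially line for line; the only cosmetic difference is that you test against indicator functions $1_B$ whereas the paper tests against $\phi\in{\mathcal C}(X)$ and then invokes density in $L^1$.

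The genuine difference is in how (iv) is linked to the rest. The paper goes around the loop (ii)$\Rightarrow$(iv)$\Rightarrow$(i): it first proves the entropy formula $h_\nu(f)=-\int\log D_\nu\,d\nu$ by an explicit cylinder computation with the Radon--Nikod\'ym derivative $D_\nu=d\nu/d\tilde\nu$, and then applies the elementary inequality $\log t\le t-1$ to $t=\psi/D_\nu$, using ${\mathcal L}_1\psi=1$ to see that $\int(\psi/D_\nu)\,d\nu=1$. The equality case $\psi/D_\mu=1$ $\mu$-a.e.\ is then fed back into a direct verification of ${\mathcal L}_\psi^*\mu=\mu$. You instead prove (iii)$\Leftrightarrow$(iv) in one stroke: the identity $h_\nu(f)=H_\nu(\xi\mid f^{-1}{\mathcal B})$ for the generating partition $\xi=\{[0],\dots,[d-1]\}$ reduces the whole quantity $h_\nu(f)+\int\log\psi\,d\nu$ to a fiberwise relative entropy $\sum_j G_{j,\nu}(y)\log\bigl(\psi(z_j(y))/G_{j,\nu}(y)\bigr)$, and Jensen (with its equality case) does the rest. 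The two arguments are really the same inequality in different clothing, since $D_\nu(x)=G_{j,\nu}(fx)$ for $x\in[j]$; your packaging avoids the explicit cylinder limit and yields both directions at once, while the paper's packaging makes the role of $D_\nu$ and $\tilde\nu$ more visible and explains the subtlety (noted in the Remark there) that (iv) gives $D_\mu=\psi$ only $\mu$-a.e., not $\tilde\mu$-a.e., unless $\psi>0$.
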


\begin{proof}
We first note that since $C(X)$ is dense in the space
$L^{1}(\tilde{\mu})$ of all $\tilde{\mu}$-measurable and
integrable functions (as well as in the space $L^{1}(\mu)$), then
$<\cdot,\cdot>$ can be extended to $L^{1}(\tilde{\mu})$ (as well
as to $L^{1}(\mu)$). We already know that a $g$-measure for $\psi$
is $f$-invariant.

First, we prove $i)$ implies $ii)$. For any $\phi(x) \in C(X)$,
$$
<\phi \psi, \tilde{\mu}> = <\phi \psi, {\mathcal L}_{1}^{*}\mu>
=<{\mathcal L}_{1}(\phi \psi), \mu>= <{\mathcal L}_{\psi}\phi,
\mu>
$$
$$
=<\phi , {\mathcal L}_{\psi}^{*}\mu> = <\phi, \mu>= <\phi D_{\mu},
\tilde{\mu}>
$$
Thus $D_{\mu}=\psi$ for $\tilde{\mu}$-a.e.\;  $x$.

Second, we prove that $ii)$ implies $i)$. Since, for any $\phi(x)
\in C(X)$,
$$
<\phi, \mu>= <\phi D_{\mu}, \tilde{\mu}> =<\phi \psi, \tilde{\mu}>
=<\phi \psi, {\mathcal L}_{1}^{*} \mu>
$$
$$
= <{\mathcal L}_{1}(\phi \psi), \mu> =< {\mathcal L}_{\psi} \phi,
\mu> =< \phi, {\mathcal L}_{\psi}^{*}\mu>.
$$
This implies ${\mathcal L}_{\psi}^{*}\mu=\mu$. Thus $\mu$ is a
$g$-measure for $\psi$.

We prove $i)$ implies $iii)$. For any Borel set $B\in {\mathcal
B}$,
$$
<({\mathcal L}_{\psi} \phi)\circ f \cdot 1_{f^{-1}(B)}, \mu> =
<({\mathcal L}_{1} (\psi\phi))\circ f \cdot 1_{B}\circ f, \mu> =
<{\mathcal L}_{1} (\psi\phi) \cdot 1_{B}, \mu>
$$
$$
=<{\mathcal L}_{1} (\psi\phi 1_{B}\circ f), \mu> =<{\mathcal
L}_{\psi} (\phi 1_{B}\circ f), \mu> = < \phi 1_{B}\circ f, \mu >
=< \phi 1_{f^{-1}(B)}, \mu>
$$
That is,
$$
E[\phi | f^{-1} ({\mathcal B})] =({\mathcal L}_{\psi} \phi)\circ
f, \quad \hbox{$\mu$-a.e. $x$}.
$$
Note that
$$
\big( ({\mathcal L}_{\psi} \phi)\circ f\big) (x) = \sum_{y\in
f^{-1} (fx)} \psi(y)\phi (y).
$$

We now prove that $iii)$ implies $i)$. Since, for any $\phi\in
C(X)$,
$$
E[\phi | f^{-1} ({\mathcal B})] ={\mathcal L}_{\psi} \phi (fx),
\quad \hbox{$\mu$-a.e. $x$},
$$
then,
$$
<\phi, \mu> =  <({\mathcal L}_{\psi} \phi)\circ f, \mu>=
<{\mathcal L}_{\psi} \phi, \mu> = <\phi, {\mathcal
L}_{\psi}^{*}\mu>.
$$
Thus ${\mathcal L}_{\psi}^{*}\mu=\mu$.

We prove that $ii)$ implies $iv)$. For any $\nu\in {\mathcal
M}(X,f)$, let
$$
D_{\nu} = \frac{d\nu}{d\tilde{\nu}}, \quad \tilde{\mu}-a.e. x,
$$
be the Radon-Nikod\'ym derivative. We claim that
$$
h_{\nu}(f) =-\int_{X} \log D_{\nu} d\nu.
$$
We prove this claim. Since $-\log D_{\nu}$ is a non-negative
$\tilde{\nu}$-measurable function and since $\nu$ is absolutely
continuous with respect to $\tilde{\nu}$, it is also a
$\nu$-measurable function. Thus
$$
\int_{X} -\log D_{\nu} d\nu = \int_{X} -D_{\nu}\log D_{\nu}
d\tilde{\nu}.
$$

By the definition,
$$
D_{\nu} (x) =\lim_{n\to \infty} \frac{\nu ([i_{0}i_{1}\cdots
i_{n-1}])}{\nu ([i_{1}\cdots i_{n-1}])}, \quad \tilde{\nu}-a.e.
\;\; x=i_{0}i_{1}\cdots i_{n-1}\cdots.
$$
Let
$$
D_{n,\nu} (x) = \frac{\nu ([i_{0}i_{1}\cdots i_{n-1}])}{\nu
([i_{1}\cdots i_{n-1}])}.
$$
Then
$$
D_{\nu} (x) =\lim_{n\to \infty} D_{n,\nu}(x), \quad
\tilde{\nu}-a.e. \;\; x.
$$
and
$$
-D_{\nu} (x) \log D_{\nu} (x)  =\lim_{n\to \infty} (-D_{n,\nu}(x)
\log D_{n,\nu}(x)), \quad \tilde{\nu}-a.e.\;\; x
$$

Since $-t\log t$ is a positive bounded function on $[0,1]$, by the
Lebesgue control convergence theorem,
$$
\int_{X} \lim_{n\to \infty} (-D_{n,\nu} (x) \log D_{n,\nu} (x))\;
d\tilde{\nu} = \lim_{n\to \infty} \int_{X} -D_{n,\nu}(x) \log
D_{n,\nu}(x)d\tilde{\nu}.
$$
However,
$$
\int_{X} -D_{n,\nu}(x) \log D_{n,\nu}(x)d\tilde{\nu}
$$
$$
= \sum_{[i_{0}\cdots i_{n-1}]} - \frac{ \nu([i_{0}i_{1}\cdots
i_{n-1}])}{\nu([i_{1}\cdots i_{n-1}])} \log \big( \frac{
\nu([i_{0}i_{1}\cdots i_{n-1}])}{\nu([i_{1}\cdots i_{n-1}])}\big)
\tilde{\nu} ([i_{0}i_{1}\cdots i_{n-1}])
$$
$$
=\sum_{[i_{0}\cdots i_{n-1}]} - \nu([i_{0}i_{1}\cdots i_{n-1}])
\log \big( \frac{ \nu([i_{0}i_{1}\cdots
i_{n-1}])}{\nu([i_{1}\cdots i_{n-1}])}\big).
$$
Note that $\tilde{\nu} ([i_{0}i_{1}\cdots
i_{n-1}])=\nu([i_{1}\cdots i_{n-1}])$. But we know that
$$
h_{\nu}(f) = \lim_{n\to \infty} \frac{1}{n} \sum_{[i_{0}\cdots
i_{n-1}]}  - \nu([i_{0}i_{1}\cdots i_{n-1}]) \log
\nu([i_{0}i_{1}\cdots i_{n-1}])
$$
$$
= \lim_{n\to \infty}\sum_{[i_{0}\cdots i_{n-1}]} -
\nu([i_{0}i_{1}\cdots i_{n-1}]) \log \big( \frac{
\nu([i_{0}i_{1}\cdots i_{n-1}])}{\nu([i_{1}\cdots i_{n-1}])}\big).
$$
We proved the claim.

The claim says that $h_{\nu}(f) =-<\log D_{\nu}, \nu>$ for any
$\nu\in {\mathcal M}(X,f)$. Then
$$
h_{\nu}(f) +<\log \psi, \nu> = <\log \frac{\psi}{D_{\nu}}, \nu>
\leq < \frac{\psi}{D_{\nu}}-1, \nu > =<\frac{\psi}{D_{\nu}},\nu>-1
$$
$$
= <\psi, \tilde{\nu}>-1 = < {\mathcal L}_{1}\psi, \nu>-1 =
<1,\nu>-1=1-1=0.
$$
Note that here we use the inequality
\begin{equation}~\label{log}
\log t \leq t-1 \quad \hbox{and}\quad \log t =t-1 \quad\hbox{if
and only if}\quad t=1.
\end{equation}
The assumption in $ii)$ is that $D_{\mu}=\psi$, $\tilde{\mu}-a.e.
x$. But $\mu \ll \tilde{\mu}$, $D_{\mu}=\psi$, $\mu$-a.e. $x$ too.
So we have that
$$
h_{\mu} +\int_{X}\log \psi\; d\mu =0.
$$
So $\mu$ is an equilibrium state for the potential $\log \psi$ in
the meaning that
$$
0=h_{\mu}(f) +\int_{X} \log \psi \; d\mu = \sup \{ h_{\nu}(f) +
\int_{X} \log \psi \; d\nu \; |\; \nu \in {\mathcal M}(X,f)\}.
$$

Last we prove that $iv)$ implies $i)$. Suppose $\mu\in {\mathcal
M}(X,f)$ is an equilibrium state for the potential $\log \psi$. We
have that
$$
h_{\mu}(f) +<\log \psi, \mu>=0.
$$
We already know that
$$
h_{\mu}(f) +<\log D_{\mu}, \mu>=0.
$$
So we have that
$$
h_{\mu}(f) +<\log \psi, \mu> = h_{\mu}(f) +<\log D_{\mu}, \mu>.
$$
Therefore,
$$
0=<\log \psi -\log D_{\mu}, \mu> =< \log \frac{\psi}{D_{\mu}},
\mu>
$$
$$
\leq <\frac{\psi}{D_{\mu}} -1, \mu> = <\frac{\psi}{D_{\mu}},\mu>-1
=<\psi, \tilde{\mu}> -1
$$
$$
= <\psi, {\mathcal L}_{1}^{*}\mu> =<{\mathcal L}_{1}\psi, \mu>-1
=<1,\mu>-1=1-1=0.
$$
Formula~(\ref{log}) implies that
\begin{equation}~\label{muequal}
\frac{\psi(x)}{D_{\mu}(x)}=1,\quad \mu-a.e.\;\; x.
\end{equation}

\begin{remark} This cannot implies that
$$
\frac{\psi(x)}{D_{\mu}(x)}=1, \quad \tilde{\mu}-a.e. \; \;x,
$$
since $\tilde{\mu}$ may not be absolutely continuous with respect
to $\mu$. So this will not implies $ii)$. However, if $\psi (x)
>0$ for all $x\in X$, then
$$
D_{\tilde{\mu}}(x)=\frac{d\tilde{\mu}} {d\mu} (x) =
\frac{1}{D_{\mu}(x)} =\frac{1}{\psi (x)}, \quad \mu-a.e.\; x.
$$
This implies that $\tilde{\mu}$ is absolutely continuous with
respect to $\mu$. Then Equation~(\ref{muequal}) implies $ii)$.
\end{remark}

For any $\phi(x) \in C(X)$,
$$
<\phi, {\mathcal L}_{\psi}^{*}\mu> = <{\mathcal L}_{\psi}\phi,
\mu> = <{\mathcal L}_{1}(\psi\phi), \mu>
$$
$$
= <\psi\phi,  {\mathcal L}_{1}^{*}\mu>=< \phi\psi, \tilde{\mu}> =
<\phi \frac{\psi}{D_{\mu}}, \mu> =<\phi, \mu>.
$$
This says that ${\mathcal L}_{\psi}^{*}\mu=\mu$. We proved $i)$.
\end{proof}

\vspace*{10pt} For any $\sigma$-invariant probability measure
$\mu$, let $\mu^{*}$ be the dual $\sigma^{*}$-invariant
probability measure which we have constructed in the previous
section. Then we have a $\tilde{\mu}$-measurable function
$$
D_{\mu} (w) =\lim_{n\to \infty} \frac{\mu ([w_{n}])}{\mu([\sigma
(w_{n})])}, \quad \hbox{for $\tilde{\mu}$-a.e. $w=w_{n}\cdots \in
\Sigma$}
$$
and a $\tilde{\mu}^{*}$-measurable function
$$
D_{\mu^{*}} (w^{*}) =\lim_{n\to \infty} \frac{\mu^{*}
([w_{n}^{*}])}{\mu^{*}([\sigma^{*}(w_{n}^{*})])}, \quad \hbox{for
$\tilde{\mu}^{*}$-a.e. $w^{*}=\cdots w_{n}^{*} \in \Sigma^{*}$}.
$$
Now a question related to those questions in the end of the
previous section is as follows.

\vspace*{10pt}

\begin{question}
Can $D_{\mu^{*}}(w^{*})$ (or $D_{\mu}(w)$) be extended to a
continuous or H\"older continuous $g$-function?
\end{question}

In the next two sections, we give an affirmative answer to this
question.

\section{Gibbs measures and dual geometric Gibbs
measures}

Consider $f\in {\mathcal C}^{1+}$. One over the derivative
$1/f'(x)$ can be lifted to a positive H\"older continuous function
$$
\psi (w) =\psi_f (w) =\frac{1}{f'(\pi_{f} (w))}
$$
on the symbolic space $\Sigma$. By thinking of $\log \psi $ as a
potential on $(\Sigma, \sigma)$, there is a unique
$\sigma$-invariant measure $\mu= \mu_{\psi}$ (Gibbs measure for
the potential $\log \psi$) as we have mentioned in the previous
section such that
$$
C^{-1} \leq \frac{\mu ([w_{n}])}{\exp (\sum_{i=0}^{n-1}\log \psi
(\sigma^{i} (w)))}\leq C
$$
for any left cylinder $[w_{n}]$ and any $w=w_{n}\cdots \in
[w_{n}]$, where $C$ is a fixed constant. (Note that $P=P(\log
\psi)=0$ in this case.)

Every element $\Phi=[(f,h_{f})]$ in the Teichm\"uller space
${\mathcal T}{\mathcal C}^{1+}$ can also be represented by the
Gibbs measure $\mu$ for the potential $\log \psi (w)$. The reason
is that for every $(g, h_{g})\in \Psi$, $h=h_{f}\circ h_{g}^{-1}$
is a $C^{1}$ diffeomorphism of $T$ such that
$$
f(h(x)) =h(g (x)).
$$
Then
$$
f'(h(x)) h'(x) = h'(g(x)) g'(x).
$$
Therefore,
$$
\log \psi_{f}(w) -\log \psi_{g} (w) = \log h' (w) - \log h'(\sigma
(w)).
$$
So $\psi_{g}$ and $\psi_{f}$ are cohomologous to each other.

The Gibbs measure $\mu$ in this context enjoys the following
geometric property too: The push-forward measure
$$
\mu_{geo} =(\pi_{f})_{*}\mu
$$
is a $C^{1+\alpha}$ smooth $f$-invariant measure for some
$0<\alpha\leq 1$. This means that there is a $C^{\alpha}$ function
$\rho$ on $T$ such that
$$
\mu_{geo} (A) =\int_{A} \rho (x) dx, \quad \hbox{for all Borel
subsets $A$ on $T$}.
$$

There is another way to find the density $\rho$. First it is a
standard method to find an invariant measure for a dynamical
system $f$. Let $\mu_{0}$ be the Lebesgue measure on $T$. Consider
the push-forward measure $\mu_{n}= (f^{n})_{*}\mu_{0}$ by the
$n^{th}$ iterates of $f$. Sum up these measures to get
$$
\nu_{n} = \frac{1}{n} \sum_{k=0}^{n-1} \mu_{n}.
$$
Any weak limit of a subsequence of $\{ \nu_{n}\}$ will be an
$f$-invariant measure. Since we start with an $f\in {\mathcal
C}^{1+}$, we can prove that the sequence $\{\mu_{n}\}$ is actually
convergent in the $C^{1}$ topology to a $C^{1+\alpha}$ smooth
measure $\mu_{geo}$ for some $0<\alpha\leq 1$ as follows:  Each
$\mu_{n}= (f^{n})_{*}\mu_{0}$ has an $\alpha$-H\"older continuous
density
$$
\rho_{n} (x) =\sum_{f^{n}(y)=x} \frac{1}{(f^{n})'(y)}.
$$
Following the theory of transfer operators (refer
to~\cite{Jiang8}), $\rho_{n}(x)$ converges uniformly to an
$\alpha$-H\"older continuous function $\rho (x)$. Thus
$$
\mu_{geo} (A) =\int_{A} \rho (x) dx
$$
is the limit of $\mu_{n}$ and is a $C^{1+\alpha}$ smooth
$f$-invariant probability measure.

Let $y= h(z) =\mu_{geo}([1,z])$ be the distribution function of
$\mu_{geo}$, where $[1,z]$ is the oriented arc on $T$ from $1$ to
$z$. Then $\varsigma=h(z)$ is a $C^{1+\alpha}$-diffeomorphism of
$T$. Let
$$
g(\varsigma) = h\circ f\circ h^{-1} (\varsigma), \quad z=
h^{-1}(\varsigma).
$$
(Note that $g$ here means a circle endomorphism, not a
$g$-function!) Then $g$ preserves the Lebesgue measure
$d\varsigma$ (which means that $g_{*}(d\varsigma)=d\varsigma$, or
equivalently, the Lebesgue measure is $g$-invariant). Since the
Lebesgue measure is an ergodic $g$-invariant measure, $(g, h_{g})$
is unique in the Teichm\"uller point $\Pi=[(f, h_{f})]$.

By considering $\psi_{g}(w) =1/g'(\pi_{g} (w))$, then
$\psi_{g}(w)$ is a $g$-function on $\Sigma$ and $\mu$ is a
$g$-measure. Thus $\mu$ is an equilibrium state for the potential
$\log \psi_{g}(w)$. It follows that $\mu_{geo}$ is also an
equilibrium state for the potential $-\log f'(x)$, that is,
$$
0=P (-\log f'(x))= h_{\mu_{geo}} (f) -\int_{T} \log f'(x)
d\mu_{geo}
$$
$$
= h_{\mu_{geo}} (f) -\int_{T} \log f'(x) \rho (x) dx
$$
$$
=\sup \{ h_{\nu}(f) -\int_{T} \log f'(x) d\nu \; |\; \hbox{$\nu$
is an $f$-invariant propbability measure}\}
$$
$$
= h_{Leb} (g) -\int_{T} \log g'(y) dy,
$$
where $h_{\mu_{geo}}$ and $h_{Leb}(g)$ denote the
measure-theoretical entropies with respect to $\mu_{geo}$ and the
Lebesgue measure. The equilibrium state $\mu_{geo}$ is unique in
this case.

Now by considering the dual invariant probability measure
$\mu^{*}$ for this Gibbs measure $\mu$, we have that

\vspace*{10pt}
\begin{theorem}~\label{der1}
Suppose $f\in {\mathcal C}^{1+}$. Consider $\Sigma^{*}$ with the
metric $d(\cdot, \cdot)$ induced from $\mu^*$ on $\Sigma^{*}$.
Then the right shift $\sigma^{*}$ is $C^{1+}$ differentiable with
respect to $d(\cdot, \cdot)$ and its derivative is the dual
derivative $D^{*}(f)(w^{*})$ of $f$, i.e.,
$$
\frac{d \sigma^{*}}{dw^{*}} (w^{*}) = D^{*}(f)(w^{*}) , \quad
\forall \; w^{*}\in \Sigma^{*}.
$$
(Note that $\sigma^{*}$ is $C^{1+}$ differentiable means that it
is differentiable and the derivative is a H\"older continuous
function.)
\end{theorem}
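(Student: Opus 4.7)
My plan is to unfold the definition of the derivative of $\sigma^{*}$ with respect to the metric $d(\cdot,\cdot)$, and then reduce the resulting expression to the dual derivative using the geometric representation of the Gibbs measure. First, observe that if $w^{*}$ and $\tilde{w}^{*}$ agree in their last $n$ digits but differ in the $(n+1)$-st, then the smallest right cylinder containing both is $[w_{n}^{*}]$, and their images under $\sigma^{*}$ agree in the last $n-1$ digits, so they lie in the smallest cylinder $[v_{n-1}^{*}]=[\sigma^{*}(w_{n}^{*})]$. Hence
\begin{equation*}
\frac{d\sigma^{*}}{dw^{*}}(w^{*}) \;=\; \lim_{n\to\infty}\frac{d(\sigma^{*}w^{*},\sigma^{*}\tilde{w}^{*})}{d(w^{*},\tilde{w}^{*})} \;=\; \lim_{n\to\infty}\frac{\mu^{*}([v_{n-1}^{*}])}{\mu^{*}([w_{n}^{*}])} \;=\; \lim_{n\to\infty}\frac{\mu([v_{n-1}])}{\mu([w_{n}])},
\end{equation*}
where the last equality is the definition of $\mu^{*}$.

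Next, I would exploit the geometric content of $\mu$. Since $f\in \mathcal{C}^{1+}$ and $\mu$ is the Gibbs measure associated with the potential $-\log f'$, the push-forward $\mu_{geo}=(\pi_{f})_{*}\mu$ is absolutely continuous with a strictly positive, $\alpha$-H\"older continuous density $\rho$, as reviewed in \S13. Therefore
\begin{equation*}
\mu([w_{n}]) \;=\; \mu_{geo}(J_{w_{n}}) \;=\; \int_{I_{w_{n}}}\rho(x)\,dx \;=\; \rho(\xi_{w_{n}})\,|I_{w_{n}}|
\end{equation*}
for some $\xi_{w_{n}}\in I_{w_{n}}$, and analogously $\mu([v_{n-1}])=\rho(\xi_{v_{n-1}})\,|I_{v_{n-1}}|$ with $\xi_{v_{n-1}}\in I_{v_{n-1}}$. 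Since $I_{w_{n}}\subset I_{v_{n-1}}$ and $f$ is expanding, we have $|I_{v_{n-1}}|\leq C\lambda^{-(n-1)}$, so $|\xi_{v_{n-1}}-\xi_{w_{n}}|\to 0$ exponentially fast, and H\"older continuity of $\rho$ gives $|\log(\rho(\xi_{v_{n-1}})/\rho(\xi_{w_{n}}))|\leq C'\lambda^{-\alpha(n-1)}$.

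Combining these ingredients,
\begin{equation*}
\frac{\mu([v_{n-1}])}{\mu([w_{n}])} \;=\; \frac{\rho(\xi_{v_{n-1}})}{\rho(\xi_{w_{n}})}\cdot\frac{|I_{v_{n-1}}|}{|I_{w_{n}}|} \;=\; \bigl(1+O(\lambda^{-\alpha(n-1)})\bigr)\cdot D^{*}(f)(w_{n}^{*}).
\end{equation*}
By Theorem~\ref{dualderivative}, the right-hand side converges (exponentially fast, uniformly in $w^{*}$) to $D^{*}(f)(w^{*})$, and $D^{*}(f)(\cdot)$ is itself H\"older continuous on $\Sigma^{*}$. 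This proves both that the derivative $d\sigma^{*}/dw^{*}$ exists and equals $D^{*}(f)(w^{*})$, and that it is H\"older continuous, i.e., $\sigma^{*}$ is $C^{1+}$ with respect to the metric $d(\cdot,\cdot)$.

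The main obstacle is the conceptual one of correctly interpreting differentiability in the metric $d(\cdot,\cdot)$ induced by $\mu^{*}$: one has to verify that the limit of cylinder-measure ratios is truly the right notion of derivative of $\sigma^{*}$, which requires the cylinders $[w_{n}^{*}]$ to form a neighborhood basis that is respected (in a branch-wise sense) by $\sigma^{*}$, so that the image of a small ball around $w^{*}$ sits in a corresponding small ball around $\sigma^{*}w^{*}$. Once this is fixed, the remaining work is purely the geometric comparison above, which is standard Gibbs-measure bookkeeping.
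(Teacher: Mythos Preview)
Your proof is correct and follows essentially the same route as the paper's: both express $\mu^{*}([w_{n}^{*}])$ and $\mu^{*}([v_{n-1}^{*}])$ via the smooth geometric measure $\mu_{geo}$ on the intervals $I_{w_{n}}\subset I_{v_{n-1}}$, then use the $C^{1+\alpha}$ regularity of $\mu_{geo}$ (you use the H\"older density $\rho$, the paper equivalently uses the $C^{1+\alpha}$ distribution function $h$ with $h'=\rho$) together with the exponential convergence of $D^{*}(f)(w_{n}^{*})$ to $D^{*}(f)(w^{*})$ from Theorem~\ref{dualderivative}. Your extra paragraph interpreting the derivative via nearby points is not needed, since the paper simply \emph{defines} $\frac{d\sigma^{*}}{dw^{*}}(w^{*})=\lim_{n\to\infty}\mu^{*}(\sigma^{*}([w_{n}^{*}]))/\mu^{*}([w_{n}^{*}])$ (see Question~2), but it does no harm.
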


\begin{proof}
Suppose $w^{*}=\cdots j_{n-1}\cdots j_{1}j_{0}$ is a point in
$\Sigma^{*}$. Let $w_{n}^{*}=j_{n-1}\cdots j_{1}j_{0}$ and
$v_{n-1}^{*} = j_{n-1}\cdots j_{1}$. Let $I_{w_{n}}$ and
$I_{v_{n-1}}$ be the corresponding intervals in the
$n^{th}$-partition $\eta_{n}$ and the $(n-1)^{th}$-partition
$\eta_{n-1}$.

From the definition,
$$
\mu^{*} ([w_{n}^{*}]) = \mu ([w_{n}]) =\mu_{geo} (I_{w_{n}})
$$
and
$$
\mu^{*} ([v_{n-1}^{*}]) = \mu ([v_{n-1}]) =\mu_{geo}
(I_{v_{n-1}}).
$$
Consider the ratio
$$
\frac{\mu^{*} ([v_{n-1}^{*}])}{\mu^{*} ([w_{n}^{*}])} =
\frac{\mu_{geo} (I_{v_{n-1}})}{\mu_{geo} (I_{w_{n}})} =
\frac{h'(\xi)}{h(\xi')} D^{*}(f)(w^{*}_{n}).
$$
Since the distribution function of $\mu_{geo}$ is a
$C^{1+\alpha}$-diffeomorphism, the ratio $h'(\xi)/h(\xi')$
converges to $1$ exponentially as $n\to \infty$. We also know that
$D^{*}(f)(w^{*}_{n})$ converges $D^{*}(f) (w^{*})$ exponentially
as $n\to \infty$. So there are two constants $C>0$ and $0<\tau<1$
such that
$$
\Big| \frac{\mu^{*} ([v_{n-1}^{*}])}{\mu^{*} ([w_{n}^{*}])}
-D^{*}(f)(w^{*})\Big| \leq C\tau^{n}, \quad \forall n>0.
$$
This implies that
$$
\frac{d \sigma^{*}}{dw^{*}} (w^{*}) =\lim_{n\to \infty}
\frac{\mu^{*} ([v_{n-1}^{*}])}{\mu^{*} ([w_{n}^{*}])}=
D^{*}(f)(w^{*}).
$$
So $\sigma^{*}$ is $C^{1+}$ smooth whose derivative is
$D^{*}(f)(w^{*})$. We proved the theorem.
\end{proof}

Since the convergence in the proof is exponential and
$D^{*}(f)(w^{*})$ is a strictly positive function and $\Sigma^{*}$
is a compact space, we have the Gibbs inequalities:
$$
C^{-1} \leq \frac{\mu^{*} ([w_{n}^{*}])}{\exp (\sum_{l=0}^{n-1}
-\log D^{*}(f) ((\sigma^{*})^{l}(w^{*})))} \leq C
$$
for any right cylinder $[w_{n}^{*}]$ and any $w^{*}$ in this
cylinder, where $C>0$ is a fixed constant.

\vspace*{10pt}
\begin{corollary}~\label{cder1}
The measure $\mu^{*}$ is the Gibbs measure for the potential
$-\log D^{*}(f)(w^{*})$.
\end{corollary}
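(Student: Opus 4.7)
The plan is to verify the three defining properties of a Gibbs measure for the potential $\phi^{*}(w^{*}) = -\log D^{*}(f)(w^{*})$ on $(\Sigma^{*}, \sigma^{*})$. First, I would check that $\phi^{*}$ is a legitimate H\"older continuous potential. Since $f \in {\mathcal C}^{1+}$, Theorem~\ref{dualderivative} gives that $D^{*}(f)(w^{*})$ is H\"older continuous on $\Sigma^{*}$, and since $D^{*}(f)(w^{*}) \geq 1$ is strictly positive (as the ratio $|I_{v_{n-1}}|/|I_{w_{n}}|$ of an interval to its subinterval) and bounded above on the compact space $\Sigma^{*}$, the function $-\log D^{*}(f)(w^{*})$ is also H\"older continuous.

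Next, I would record that $\mu^{*}$ is $\sigma^{*}$-invariant. This was already proved in the proposition of \S11 where dual $\sigma^{*}$-invariant probability measures were constructed: starting from the $\sigma$-invariant Gibbs measure $\mu$ for $\log \psi_{f}$ on $\Sigma$, the construction $\mu^{*}([w_{n}^{*}]) = \mu([w_{n}])$ extends to a Borel measure that is $\sigma^{*}$-invariant by the computation showing $\mu^{*}((\sigma^{*})^{-1}[w_{n}^{*}]) = \mu^{*}([w_{n}^{*}])$.

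Finally, I would invoke the Gibbs inequalities
$$
C^{-1} \leq \frac{\mu^{*}([w_{n}^{*}])}{\exp\bigl(\sum_{l=0}^{n-1} -\log D^{*}(f)((\sigma^{*})^{l}(w^{*}))\bigr)} \leq C
$$
which were just established as an immediate corollary of the exponential convergence in the proof of Theorem~\ref{der1} together with the positivity of $D^{*}(f)$ and compactness of $\Sigma^{*}$. These inequalities, together with $\sigma^{*}$-invariance and H\"older continuity of the potential, are precisely the defining conditions for $\mu^{*}$ to be the Gibbs measure associated to the potential $-\log D^{*}(f)(w^{*})$, with pressure $P(-\log D^{*}(f)) = 0$. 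I anticipate no serious obstacle since the main analytic work (exponential convergence of $\mu^{*}([v_{n-1}^{*}])/\mu^{*}([w_{n}^{*}])$ to $D^{*}(f)(w^{*})$) has already been carried out in Theorem~\ref{der1}; the corollary is essentially a repackaging of that theorem together with the $\sigma^{*}$-invariance already available from \S11.
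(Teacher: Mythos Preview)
Your proposal is correct and follows essentially the same approach as the paper. The paper derives the Gibbs inequalities directly from the exponential convergence established in Theorem~\ref{der1} together with the strict positivity of $D^{*}(f)$ on the compact space $\Sigma^{*}$, and then states the corollary without further elaboration; your version simply makes explicit the additional standard checks (H\"older continuity of the potential and $\sigma^{*}$-invariance of $\mu^{*}$) that the paper takes for granted.
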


Thus we call $\mu^{*}$ a dual geometric Gibbs measure for the
potential $-\log D^{*}(f)(w^{*})$ in this paper. Let
$h_{\mu^{*}}(\sigma^{*})$ be the measure-theoretic entropy of
$\sigma^{*}$ with respect to $\mu^{*}$. Since the Borel
$\sigma$-algebra of $\Sigma^{*}$ is generated by all right
cylinders, then $h_{\mu^{*}} (\sigma^{*}) $ can be calculated as
$$
h_{\mu^{*}} (\sigma^{*})  =\lim_{n\to \infty} \frac{1}{n}
\sum_{w_{n}^{*}} \Big( -\mu ([w_{n}^{*}]) \log \mu
([w_{n}^{*}])\Big)
$$
$$
= \lim_{n\to \infty} \sum_{w_{n}^{*}} \Big( -\mu ([w_{n}^{*}])
\log \frac{\mu( [w_{n}^{*}])}{\mu(\sigma([w_{n}^{*}]))}\Big),
$$
where $w_{n}^{*}$ runs over all words $w_{n}^{*}=j_{n-1}\cdots
j_{0}$ of $\{ 0, 1,\cdots, d-1\}$ of length $n$.

\vspace*{10pt}
\begin{corollary}~\label{cder2}
The dual geometric Gibbs measure $\mu^{*}$ for the potential
$-\log D^{*}(f)(w^{*})$ is a $g$-measure with respect to the
$g$-function $1/D^{*}(f)(w^{*})$ whose pressure
$$
P(-\log D^{*}(f)) =0.
$$
Moreover, the Radon-Nikod\'ym derivative
$$
D_{\mu^{*}} (w^{*}) =\frac{1}{D^{*}(f) (w^{*})}, \quad \hbox{for
\; $\tilde{\mu}^{*}$-a.e. $w^{*}$},
$$
and $\mu^{*}$ is a unique equilibrium state for the potential
$-\log D^{*}(f)(w^{*})$ in the sense that
$$
0= P (-\log D^{*}(f))= h_{\mu^{*}} (\sigma^{*}) -\int_{\Sigma^{*}}
\log D^{*}(f) (w^{*}) d\mu^{*}(w^{*})
$$
$$
=\sup \big\{ h_{\nu } (\sigma^{*}) -\int_{\Sigma^{*}} \log
D^{*}(f) (w^{*}) d\nu (w^{*})\; |\; \hbox{$\nu$ is a
$\sigma^{*}$-invariant measure}\big\}.
$$
\end{corollary}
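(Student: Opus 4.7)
The plan is to identify $\psi^{*}(w^{*})=1/D^{*}(f)(w^{*})$ as a $g$-function on $\Sigma^{*}$ and then to feed this identification, together with Theorem~\ref{der1}, directly into Ledrappier's equivalence theorem proved in \S12. The equivalences of that theorem will immediately yield all the claimed properties except uniqueness, which will come from the H\"older regularity of $D^{*}(f)$ guaranteed by Theorem~\ref{dualderivative} together with classical Ruelle--Sinai--Bowen theory.

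First I would verify that $\psi^{*}$ is a $g$-function for $(\Sigma^{*},\sigma^{*})$. The preimages of $w^{*}$ under $\sigma^{*}$ are exactly the $d$ points $w^{*}j$ for $j\in\{0,1,\ldots,d-1\}$, so the required identity $\sum_{\sigma^{*}(y^{*})=w^{*}}\psi^{*}(y^{*})=1$ is exactly the summation condition~(\ref{summationcondition}) (and its evident extension to arbitrary degree $d\ge 2$). An immediate consequence is that the transfer operator $\mathcal{L}_{\psi^{*}}$ fixes the constant function $1$, so its spectral radius is $1$ and hence $P(-\log D^{*}(f))=0$.

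Next I would identify the Radon--Nikod\'ym derivative. By definition,
$$
D_{\mu^{*}}(w^{*})=\lim_{n\to\infty}\frac{\mu^{*}([w_{n}^{*}])}{\mu^{*}([\sigma^{*}(w_{n}^{*})])}
=\lim_{n\to\infty}\frac{\mu^{*}([w_{n}^{*}])}{\mu^{*}([v_{n-1}^{*}])}
=\left(\frac{d\sigma^{*}}{dw^{*}}(w^{*})\right)^{-1},
$$
where the last equality uses Theorem~\ref{der1}. Since Theorem~\ref{der1} identifies that derivative with $D^{*}(f)(w^{*})$, we obtain $D_{\mu^{*}}(w^{*})=1/D^{*}(f)(w^{*})=\psi^{*}(w^{*})$ for $\tilde{\mu}^{*}$-a.e.~$w^{*}$. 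With $\psi^{*}$ a $g$-function and $D_{\mu^{*}}=\psi^{*}$, condition $(ii)$ of Ledrappier's theorem is satisfied. The equivalence $(ii)\Leftrightarrow(i)$ then shows that $\mu^{*}$ is a $g$-measure for $\psi^{*}$, and $(ii)\Leftrightarrow(iv)$ shows that $\mu^{*}$ is an equilibrium state for $\log\psi^{*}=-\log D^{*}(f)$, i.e.
$$
0=h_{\mu^{*}}(\sigma^{*})-\int_{\Sigma^{*}}\log D^{*}(f)\,d\mu^{*}=\sup_{\nu}\Bigl\{h_{\nu}(\sigma^{*})-\int_{\Sigma^{*}}\log D^{*}(f)\,d\nu\Bigr\}.
$$

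Finally, for uniqueness: because $f\in\mathcal{C}^{1+}$, Theorem~\ref{dualderivative} gives that $D^{*}(f)$ is H\"older continuous on $\Sigma^{*}$, and $\Sigma^{*}$ with $\sigma^{*}$ is topologically conjugate to the full one-sided shift on $d$ symbols. Classical Ruelle--Sinai--Bowen theory (as cited from \cite{Bowen,Ruelle1,Ruelle2,Sinai1,Sinai2} in \S11) then guarantees a unique equilibrium state for any H\"older potential on a mixing subshift of finite type, which forces uniqueness of $\mu^{*}$.

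The main obstacle I anticipate is not any single step but the bookkeeping of ensuring that Theorem~\ref{der1} genuinely identifies the pointwise limit defining $D_{\mu^{*}}$ with the reciprocal of $D^{*}(f)$ on a set of full $\tilde{\mu}^{*}$-measure (as opposed to merely a pointwise exponential rate). This is handled by the uniform estimate $|\mu^{*}([v_{n-1}^{*}])/\mu^{*}([w_{n}^{*}])-D^{*}(f)(w^{*})|\le C\tau^{n}$ appearing in the proof of Theorem~\ref{der1}, which gives uniform, hence $\tilde{\mu}^{*}$-a.e., convergence.
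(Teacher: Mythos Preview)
Your proposal is correct and essentially follows the route the paper intends: the paper states Corollary~\ref{cder2} without an explicit proof environment, leaving it as an immediate consequence of Theorem~\ref{der1}, Corollary~\ref{cder1}, and the Ledrappier equivalence theorem of \S12, which is exactly the machinery you invoke. The one mild difference is emphasis: the paper first passes through the Gibbs inequalities (the text preceding Corollary~\ref{cder1}) to recognise $\mu^{*}$ as the Gibbs measure for the H\"older potential $-\log D^{*}(f)$, and then appeals to classical theory for uniqueness and the equilibrium identity, whereas you go straight to condition~(ii) of Ledrappier's theorem via the pointwise (indeed uniform) identification of $D_{\mu^{*}}$ from Theorem~\ref{der1}; both routes are valid and rest on the same ingredients.
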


Now following Theorem~\ref{der1} and Corollary~\ref{cder2}, we
conclude one of the main results in this paper, which is in some
sense similar to the measurable Riemann mapping theorem for smooth
Beltrami coefficients in the real one-dimensional case.

\vspace*{10pt}
\begin{theorem}~\label{gder1}
Suppose $\Psi^{*} (w^{*})\in {\mathcal T}{\mathcal C}^{1+}$. Then
there is a unique non-atomic measure $\mu^{*}$ whose support is
the whole $\Sigma^{*}$ such that consider the metric $d(\cdot,
\cdot)$ induced from $\mu^{*}$ on $\Sigma^{*}$, the right shift
$\sigma^{*}$ is $C^{1+}$ differentiable and $\Psi^{*}(w^{*})$ is
the derivative, that is,
$$
\frac{d \sigma^{*}}{dw^{*}} (w^{*})  = \Psi^{*} (w^{*}), \quad
\forall\; w^{*}
 \in \Sigma^{*}.
$$
Moreover, by considering the dynamical system
$$
\sigma^{*}: \Sigma^{*}\to \Sigma^{*}
$$
and the given potential $-\log \Psi (w^{*})$, the dual invariant
measure $\mu^{*}$ (or the induced metric $d(\cdot, \cdot)$) is an
equilibrium state for the potential $-\log \Psi (w^{*})$.
\end{theorem}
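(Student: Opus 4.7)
The plan is to reduce the theorem to Theorem~\ref{der1} and Corollary~\ref{cder2} by recognizing that every element of $\mathcal{T}\mathcal{C}^{1+}$ is the dual derivative of a genuine $C^{1+\alpha}$ expanding circle endomorphism. By Theorem~\ref{smoothT} and the representation (\ref{smoothrepresentation}), I can choose $f\in\mathcal{C}^{1+}$ with $D^*(f)(w^*)=\Psi^*(w^*)$; the choice is unique up to $C^1$ conjugacy, which in particular fixes the cohomology class of the potential $-\log f'$.

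First I would lift $-\log f'$ to the H\"older continuous potential $\log\psi_f(w)=-\log f'(\pi_f(w))$ on $\Sigma$ and invoke the Sinai--Ruelle--Bowen theory of Section 11 to obtain the unique $\sigma$-invariant Gibbs measure $\mu=\mu_{\psi_f}$. After passing to the coordinate in which the conjugate $g$ preserves Lebesgue measure (Section 13), $\psi_f$ is cohomologous to a genuine $g$-function, hence $P(\log\psi_f)=0$, and $\mu$ has full support and no atoms. Second, I would transport $\mu$ to the dual symbolic space via the cylinder-level identification $\mu^*([w_n^*])=\mu([w_n])$ from Section 11; this extends to a well-defined $\sigma^*$-invariant Borel probability measure, and bijectivity of the cylinder correspondence preserves full support and absence of atoms.

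The differentiability assertion then follows directly from Theorem~\ref{der1}: in the metric $d(\cdot,\cdot)$ induced by $\mu^*$, the ratios $\mu^*([v_{n-1}^*])/\mu^*([w_n^*])$ converge exponentially to $D^*(f)(w^*)=\Psi^*(w^*)$, and the limit is H\"older continuous because $\Psi^*$ is. The equilibrium statement is exactly Corollary~\ref{cder2} transported through the identification $\Psi^*=D^*(f)$: the measure $\mu^*$ is the unique $g$-measure for the $g$-function $1/\Psi^*$ (the summation condition (\ref{summationcondition}) ensures $\sum_j 1/\Psi^*(w^*j)=1$), equivalently the unique equilibrium state for the potential $-\log\Psi^*$ with pressure zero.

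The main obstacle is the uniqueness clause. Given a non-atomic, fully supported probability measure $\nu^*$ whose $\sigma^*$-derivative with respect to its own induced metric equals $\Psi^*(w^*)$ pointwise, one needs to show $\nu^*=\mu^*$. The derivative hypothesis telescopes to the Gibbs estimate
\[
C^{-1}\leq \frac{\nu^*([w_n^*])}{\exp\bigl(-\sum_{i=0}^{n-1}\log\Psi^*((\sigma^*)^i w^*)\bigr)}\leq C,
\]
and combined with the summation condition it forces $\nu^*$ to be $\sigma^*$-invariant and $1/\Psi^*$-conformal on cylinders. Uniqueness of $g$-measures for a H\"older continuous $g$-function, obtained from the spectral gap of the transfer operator $\mathcal{L}_{1/\Psi^*}$ on H\"older functions (equivalently, the uniqueness of equilibrium states for H\"older potentials on one-sided shifts), then yields $\nu^*=\mu^*$. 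Verifying invariance from the pointwise derivative condition is the technical heart of the argument, and it is precisely the summation condition (\ref{summationcondition}) that makes the jump from differentiability to the $g$-measure framework possible.
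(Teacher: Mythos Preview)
Your proposal is correct and follows essentially the same route as the paper, which simply states that Theorem~\ref{gder1} is a consequence of Theorem~\ref{der1} and Corollary~\ref{cder2} (via the identification $\Psi^*=D^*(f)$ from~(\ref{smoothrepresentation})). Your treatment of the uniqueness clause---deducing the Gibbs inequality from the pointwise derivative hypothesis and then invoking uniqueness of Gibbs/$g$-measures for H\"older potentials---is more explicit than what the paper writes down, but this is exactly the mechanism implicit in the paper's appeal to Corollary~\ref{cder1} and the standard uniqueness of equilibrium states for H\"older potentials.
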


Suggested by Theorem~\ref{gder1}, we have the following
definition.

\begin{definition}
Suppose $\Psi^{*}(w^{*})$ is a positive continuous function
defined on $\Sigma^{*}$. A non-atomic probability measure
$\mu^{*}$ with support on the whole $\Sigma^{*}$ is a dual
geometric Gibbs type measure for the potential $-\log
\Psi^{*}(w^{*})$ if
$$
\frac{d \sigma^{*}}{dw^{*}} (w^{*}) = \Psi^{*}(w^{*}) , \quad
\forall \; w^{*}\in \Sigma^{*}.
$$
\end{definition}

In the last section, we will discuss the existence of a dual
geometric Gibbs type measure for a continuous potential $-\log
\Psi^{*} (w^{*})$.

\section{Dual geometric Gibbs type
measures for continuous potentials.}

A map $f\in {\mathcal U}{\mathcal S}$ may not be differentiable
everywhere (it may not even be absolutely continuous). There is no
suitable Gibbs theory to be used in the study of geometric
properties of a $\sigma$-invariant measure. We thus turn to the
dual symbolic dynamical system $(\Sigma^{*}, \sigma^{*})$ and
produce a similar dual geometric Gibbs type measure theory as we
did in the previous section.

Suppose $\mu$ is a probability measure on $T$. We call it a
symmetric measure if its distribution function $h(z) =\mu ([1,z])$
is a symmetric circle homeomorphism, where $[1,z]$ means the
oriented arc on $T$ from $1$ to $z$.

An $f$-invariant measure $\mu$ can be found as we did in the
previous section. Let $\mu_{0}$ be the Lebesgue measure. Consider
the push-forward measures $\mu_{n}=(f^{n})_{*}\mu_{0}$ and sum
them up to get
$$
\nu_{n} =\frac{1}{n} \sum_{i=0}^{n-1} \mu_{n}.
$$
Take a weak limit $\mu_{geo}$ of a subsequence of $\{\nu_{n}\}$.
Then $\mu_{geo}$ is an $f$-invariant probability measure. In the
following we will prove that $\mu_{geo}$ is a symmetric
$f$-invariant probability measure.

Actually we will prove that the sequence of the distribution
functions $\{ h_{n}(z)\}_{n=0}^{\infty}$ of $\{
\nu_{n}\}_{n=0}^{\infty}$ has a convergent subsequence. And every
convergent subsequence converges to the distribution function
$h(z)$ of $\mu_{geo}$ and $h(z)$ is symmetric.

Let $H_{n}(x)$ be the lift of $h_{n}(z)$ to the real line
${\mathbb R}$. Then
$$
H_{n}(x) = \frac{1}{n} \sum_{k=0}^{n-1} \sum_{l=0}^{d^{k}-1}
|F^{-k} ([l, l+x])|.
$$

\vspace*{10pt}
\begin{theorem}~\label{symmetricmeasure}
Suppose $f$ is a uniformly symmetric circle endomorphism. Then the
sequence $\{ H_{n}(x) \}_{n=0}^{\infty}$ has a convergent
subsequence in the maximal norm on ${\mathbb R}$. Every convergent
subsequence converges in the maximal norm on ${\mathbb R}$ to a
symmetric circle homeomorphism. Thus the sequence $\{
h_{n}(z)\}_{n=0}^{\infty}$ has a convergent subsequence in the
maximal norm on $T$. Every convergent subsequence converges to a
symmetric circle homeomorphism $h(z)$ and the corresponding
subsequence of probability measures $\{\mu_{n}\}_{n=0}^{\infty}$
converges in the weak topology to an $f$-invariant symmetric
probability measure $\mu_{geo}$ whose distribution function is
$h(z)$.
\end{theorem}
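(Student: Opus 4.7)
The plan is to show that $\{H_n\}$ is a uniformly symmetric family of normalized homeomorphisms of $\mathbb{R}$, extract a convergent subsequence via Arzel\`a--Ascoli, and invoke the Krylov--Bogolyubov argument for $f$-invariance of the limit measure. The key step, and the main obstacle, is establishing that the Ces\`aro average
$$
H_n = \frac{1}{n}\sum_{k=0}^{n-1} G_k, \qquad G_k(x) = \sum_{l=0}^{d^k-1} |F^{-k}([l, l+x])|,
$$
is symmetric with modulus controlled by the same $\varepsilon(t)$ that appears in Definition~\ref{uniformlysymmetry} for $f$. Each $G_k$ is the lift of the distribution function of $(f^k)_*\mu_0$, and the relation $F^{-k}(y + d^k) = F^{-k}(y) + 1$, inherited from $F(x+1) = F(x) + d$, yields $G_k(0) = 0$ and $G_k(x+1) = G_k(x) + 1$, so each $G_k$ and each $H_n$ is the lift of an orientation-preserving circle homeomorphism normalized at $0$ and $1$.

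For uniform symmetry, I compute
$$
\frac{G_k(x+t) - G_k(x)}{G_k(x) - G_k(x-t)} = \frac{\sum_{l=0}^{d^k-1} |F^{-k}([l+x, l+x+t])|}{\sum_{l=0}^{d^k-1} |F^{-k}([l+x-t, l+x])|}.
$$
By Definition~\ref{uniformlysymmetry} applied at $y = l + x$, each termwise ratio lies in $[(1+\varepsilon(t))^{-1},\, 1+\varepsilon(t)]$. Since a ratio of sums of positive numbers is trapped between the minimum and maximum of the termwise ratios, $G_k$ itself satisfies the symmetric condition with modulus $\varepsilon$, uniformly in $k$. Applying the same termwise-ratio principle once more, now to $H_n$ viewed as a convex combination of the $G_k$, shows that each $H_n$ is symmetric with the same modulus $\varepsilon$; in particular $\{H_n\}$ is uniformly symmetric with constants independent of $n$.

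The remainder is standard. Iterating Lemma~\ref{qsdistortion} on dyadic subdivisions of $[0,1]$ produces a common modulus of continuity for $\{H_n\}$, extended to all of $\mathbb{R}$ by $H_n(x+1) = H_n(x) + 1$. Arzel\`a--Ascoli then extracts a uniformly convergent subsequence $H_{n_j} \to H$ on $[0,1]$, and thus on $\mathbb{R}$. Passage to the limit in the symmetric inequalities preserves them, so $H$ is symmetric; being strictly increasing with $H(x+1) = H(x) + 1$, it descends to a symmetric circle homeomorphism $h$. Uniform convergence $h_{n_j} \to h$ on $T$ gives weak-$*$ convergence $\nu_{n_j} \to \mu_{geo}$, where $\mu_{geo}$ is the probability measure with distribution $h$. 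Finally, the Krylov--Bogolyubov telescoping
$$
\Big|\int \phi\, d\nu_n - \int \phi\circ f\, d\nu_n\Big| = \frac{1}{n}\Big|\int \phi\, d\mu_0 - \int \phi\, d(f^n)_*\mu_0\Big| \leq \frac{2\|\phi\|_\infty}{n}
$$
for $\phi \in C(T)$ passes to the limit and yields $f$-invariance of $\mu_{geo}$.
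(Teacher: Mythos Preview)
Your proof is correct and follows essentially the same approach as the paper's own argument: both establish that the $H_n$ are uniformly symmetric with the same modulus $\varepsilon(t)$ as $f$, extract a convergent subsequence by compactness (the paper appeals to compactness of normalized quasisymmetric maps with bounded distortion, you to Arzel\`a--Ascoli via equicontinuity), and pass to the limit in the symmetric inequalities. Your version is in fact slightly more explicit, since you write out the mediant-inequality step for the ratio of sums that the paper merely asserts, and you supply the Krylov--Bogolyubov telescoping for $f$-invariance that the paper takes as given from the preceding discussion.
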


\begin{proof}
Since $f$ is uniformly symmetric, there is a bounded positive
function $\varepsilon (t)>0$ with $\varepsilon (t) \to 0$ as $t\to
0^{+}$ such that
$$
\frac{1}{1+\varepsilon (t)} \leq
\frac{|F^{-n}(x+t)-F^{-n}(x)|}{|F^{-n}(x) -F^{-n}(x-t)|} \leq
1+\varepsilon (t), \quad \forall x\in {\mathbb R}, t>0.
$$
Let $C>0$ be an upper bound of $\epsilon (t)$.

From the definition of $H_{n}$,
$$
H_{n}(\frac{1}{2}) =\frac{1}{n} \sum_{k=0}^{n-1}
\sum_{l=0}^{d^{k}-1} |F^{-k} ([l, l+\frac{1}{2}])|.
$$
Since $H_{n}(0)=0$ and $H_{n}(1)=1$,
$$
\frac{1}{1+C^{-1}}\leq \frac{|F^{-k} ([l,
l+\frac{1}{2}])|}{|F^{-k} ([l, l+1])|} \leq \frac{1}{1+C}.
$$
This implies that
$$
\frac{1}{1+C^{-1}}\leq H_{n}(\frac{1}{2}) \leq \frac{1}{1+C}.
$$
Similarly,
$$
\frac{1}{1+C^{-1}}\leq
\frac{H_{n}(\frac{1}{4})}{H_{n}(\frac{1}{2})} \leq \frac{1}{1+C}.
$$

Since $\{ H_{n}(x)\}_{n=0}^{\infty}$ is a sequence of
quasisymmetric circle homeomorphisms whose quasisymmetric
constants are bounded uniformly by $C$, and since the distances of
the images of any two points in $\{0, 1/4, 1/2, 1\}$ under $H_{n}$
are greater than a constant uniformly on $n$, $\{
H_{n}(x)\}_{n=0}^{\infty}$ is in a compact set in the space of all
quasisymmetric circle homeomorphisms. Thus $\{
H_{n}(x)\}_{n=0}^{\infty}$ has a convergent subsequence $\{
H_{n_{i}}(x)\}_{i=0}^{\infty}$ in the maximal norm on ${\mathbb
R}$ whose limiting function $H(x)$ is a circle homeomorphism.
Furthermore, since the sequence $\{ H_{n}\}_{n=0}^{\infty}$ is
uniformly symmetric, that is,
$$
\frac{1}{1+\varepsilon (t)} \leq
\frac{|H_{n}(x+t)-H_{n}(x)|}{|H_{n}(x) -H_{n}(x-t)|} \leq
1+\varepsilon (t), \quad \forall \; x\in {\mathbb R},\;\; t>0,
$$
the limiting circle homeomorphism $H(x)$ is also symmetric, and
$$
\frac{1}{1+\varepsilon (t)} \leq \frac{|H(x+t)-H(x)|}{|H(x)
-H(x-t)|} \leq 1+\varepsilon (t), \quad \forall \; x\in {\mathbb
R},\;\; t>0.
$$

Since $H_{n}(x)$ is the lift of $h_{n}$, $\{
h_{n_{i}}(x)\}_{i=0}^{\infty}$ is a convergent subsequence in the
maximal norm on $T$ whose limiting function $h(x)$ is a circle
homeomorphism whose lift is $H(x)$. Since $h_{n}(z)$ is the
distribution function of $\nu_{n}$, so $\{
\nu_{n_{i}}\}_{i=0}^{\infty}$ is a convergent subsequence in the
weak topology and converges to $\mu_{geo}$ whose distribution
function is $h(z)$. So $\mu_{geo}$ is a symmetric measure.
\end{proof}

We now lift $\mu_{geo}$ to $\Sigma$ to get a $\sigma$-invariant
measure $\mu$ as follows. For any finite word $w_{n}=i_{0}\cdots
i_{n-1}$, consider the left cylinder $[w_{n}]$. Define
$$
\mu ([w_{n}]) =\mu_{geo} (I_{w_{n}}),
$$
where $I_{w_{n}}$ is the interval in $\eta_{n}$ labeled by
$w_{n}$. One can check that it satisfies the finite additive law
and the continuity law. So it can be extended to a
$\sigma$-invariant probability measure $\mu$ on $\Sigma$ such that
$$
(\pi_{f})_{*} \mu=\mu_{geo}.
$$
For $\mu$, we can construct its
dual invariant measure $\mu^{*}$ on $\Sigma^{*}$ as we did in the
previous two sections. Then we have the following dual geometric
Gibbs type property as we had before in the smooth case:

\vspace*{10pt}
\begin{theorem}~\label{qsder1}
Suppose $f\in {\mathcal U}{\mathcal S}$. Consider $\Sigma^{*}$
with the metric $d(\cdot, \cdot)$ induced from $\mu^*$ on
$\Sigma^{*}$. Then the right shift $\sigma^{*}$ is $C^{1}$
differentiable with respect to $d(\cdot, \cdot)$ and its
derivative is the dual derivative $D^{*}(f)(w^{*})$ of $f$, i.e.,
$$
\frac{d \sigma^{*}}{dw^{*}} (w^{*}) = D^{*}(f)(w^{*}) , \quad
\forall\; w^{*}\in \Sigma^{*}.
$$
\end{theorem}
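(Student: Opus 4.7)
The plan is to run the same argument as in Theorem~\ref{der1}, but in the absence of $C^{1+\alpha}$ smoothness of the distribution function $h$ of $\mu_{geo}$, I would replace the H\"older distortion estimate by the symmetric distortion estimate of Lemma~\ref{qsdistortion}, combined with the bounded geometry supplied by Theorem~\ref{boundedgeometrythm} and the symmetric modulus of $h$ coming from Theorem~\ref{symmetricmeasure}.

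Fix $w^{*}=\cdots j_{n-1}\cdots j_{1}j_{0}\in \Sigma^{*}$, set $w^{*}_{n}=j_{n-1}\cdots j_{0}$ and $v^{*}_{n-1}=\sigma^{*}(w^{*}_{n})$, and let $I_{w_{n}}\subset I_{v_{n-1}}$ be the corresponding intervals in $\eta_{n}$ and $\eta_{n-1}$. By the construction of $\mu^{*}$ from the lift of $\mu_{geo}$,
$$
\frac{\mu^{*}([v^{*}_{n-1}])}{\mu^{*}([w^{*}_{n}])}=\frac{\mu_{geo}(I_{v_{n-1}})}{\mu_{geo}(I_{w_{n}})}=\frac{|h(I_{v_{n-1}})|}{|h(I_{w_{n}})|}.
$$
By Theorem~\ref{symmetricmeasure} the homeomorphism $h$ is symmetric, and by Theorem~\ref{boundedgeometrythm} one has $|I_{v_{n-1}}|\leq C\tau^{n-1}$ uniformly in $w^{*}$. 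Rescaling $I_{v_{n-1}}$ and $h(I_{v_{n-1}})$ affinely to $[0,1]$ produces a homeomorphism $\hat h_{n}:[0,1]\to[0,1]$ with $\hat h_{n}(0)=0$, $\hat h_{n}(1)=1$ whose quasisymmetric dilatation is bounded by $M_{n}=1+\sup_{0<t\leq C\tau^{n-1}}\varepsilon_{h}(t)$, and $M_{n}\to 1$ as $n\to\infty$ uniformly in $w^{*}$.

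Under the normalization, $I_{w_{n}}$ corresponds to a subinterval $[x_{n},x_{n}+\ell_{n}]$ with $\ell_{n}=|I_{w_{n}}|/|I_{v_{n-1}}|=1/D^{*}(f)(w_{n}^{*})$. Applying Lemma~\ref{qsdistortion} twice to $\hat h_{n}$ (once at $x_{n}$ and once at $x_{n}+\ell_{n}$) gives
$$
\Big|\frac{|h(I_{w_{n}})|}{|h(I_{v_{n-1}})|}-\frac{1}{D^{*}(f)(w^{*}_{n})}\Big|\leq 2\zeta(M_{n}),
$$
and since $\zeta(M_{n})\to 0$ uniformly in $w^{*}$ while $D^{*}(f)(w^{*}_{n})\to D^{*}(f)(w^{*})$ uniformly in $w^{*}$ by Theorem~\ref{dualderivative}, I can invert and pass to the limit to conclude
$$
\frac{d\sigma^{*}}{dw^{*}}(w^{*})=\lim_{n\to\infty}\frac{\mu^{*}([v^{*}_{n-1}])}{\mu^{*}([w^{*}_{n}])}=D^{*}(f)(w^{*}),
$$
with continuity of the limit given again by Theorem~\ref{dualderivative}, yielding that $\sigma^{*}$ is $C^{1}$ with respect to $d(\cdot,\cdot)$.

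The main obstacle is the absence of pointwise differentiability of $h$: in Theorem~\ref{der1} the error term $h'(\xi)/h'(\xi')$ tended to $1$ exponentially by H\"older continuity of $h'$, whereas here $h$ need not even be absolutely continuous. The workaround is purely scale-based — controlling the distortion of $h$ on each interval $I_{v_{n-1}}$ via its symmetric modulus $\varepsilon_{h}$ and Lemma~\ref{qsdistortion}. The uniformity of the convergence is the delicate point and relies essentially on the fact that the diameters $|I_{v_{n-1}}|$ shrink uniformly at a geometric rate, which in turn is a consequence of the uniform symmetry of $f$ through Theorem~\ref{boundedgeometrythm}.
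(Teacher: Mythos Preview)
Your argument is correct and follows essentially the same route as the paper: both proofs identify $\mu^{*}([v^{*}_{n-1}])/\mu^{*}([w^{*}_{n}])$ with $|h(I_{v_{n-1}})|/|h(I_{w_{n}})|$ and then use the symmetry of $h$ (Theorem~\ref{symmetricmeasure}) together with Lemma~\ref{qsdistortion} on shrinking intervals to show this ratio differs from $D^{*}(f)(w_{n}^{*})$ by a term tending to zero uniformly. Your write-up is in fact slightly more explicit than the paper's in spelling out the role of bounded geometry (Theorem~\ref{boundedgeometrythm}) for the uniform shrinkage and in tracking the rescaling, but the substance is the same.
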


\begin{proof}
Suppose $w^{*}=\cdots j_{n-1}\cdots j_{1}j_{0}$ is a point in
$\Sigma^{*}$. Let $w_{n}^{*}=j_{n-1}\cdots j_{1}j_{0}$ and
$v_{n-1}^{*} = j_{n-1}\cdots j_{1}$. Let $I_{w_{n}}$ and
$I_{v_{n-1}}$ be the corresponding intervals in the
$n^{th}$-partition $\eta_{n}$ and the $(n-1)^{th}$-partition
$\eta_{n-1}$.

From the definition,
$$
\mu^{*} ([w_{n}^{*}]) = \mu ([w_{n}]) =\mu_{geo} (I_{w_{n}})
$$
and
$$
\mu^{*} ([v_{n-1}^{*}]) = \mu ([v_{n-1}]) =\mu_{geo}
(I_{v_{n-1}}).
$$
Consider the ratio
$$
\frac{\mu^{*} ([v_{n-1}^{*}])}{\mu^{*} ([w_{n}^{*}])} =
\frac{\mu_{geo} (I_{v_{n-1}})}{\mu_{geo} (I_{w_{n}})}.
$$
Since the distribution function $h(z)$ of $\mu_{geo}$ is
symmetric, from the quasisymmetric distortion property
(Lemma~\ref{qsdistortion}), the sequence
$$
\frac{\mu_{geo} (I_{v_{n-1}})}{\mu_{geo} (I_{w_{n}})} - \frac{
|I_{v_{n-1}}|}{|I_{w_{n}}|} =\frac{\mu_{geo}
(I_{v_{n-1}})}{\mu_{geo} (I_{w_{n}})}- D^{*}(f) (w^{*}_{n})
$$
converges to $0$  as $n\to \infty$ uniformly on $w^{*}$. This
implies that
$$
\frac{d \sigma^{*}}{dw^{*}} (w^{*}) =\lim_{n\to \infty}
\frac{\mu^{*} ([v_{n-1}^{*}])}{\mu^{*} ([w_{n}^{*}])}=
D^{*}(f)(w^{*}).
$$
So $\sigma^{*}$ is $C^{1}$ under the metric $d(\cdot, \cdot)$
induced from $\mu^{*}$ whose derivative is $D^{*}(f)(w^{*})$. We
proved the theorem.
\end{proof}

Finally, we conclude one of the main results in this paper, which
is in some sense similar to the measurable Riemann mapping theorem
for general Beltrami coefficients in the one-dimensional case.

\vspace*{10pt}
\begin{theorem}~\label{der2}
Suppose $\Psi^{*}(w^{*})\in {\mathcal T}{\mathcal U}{\mathcal S}$.
Then there is a dual geometric Gibbs type measure $\mu^{*}$ for
the continuous potential $-\log \Psi^{*}(w^{*})$. It is a
$g$-measure for the $g$-function $1/\Psi^{*}(f)(w^{*})$ whose
pressure
$$
P(-\log D^{*}(f)) =0.
$$
Moreover, the Radon-Nikod\'ym derivative
$$
D_{\mu^{*}} (w^{*}) =\frac{1}{\Psi^{*}(f)(w^{*})}, \quad
\hbox{for\; $\tilde{\mu}^{*}$-a.e. $w^{*}$}.
$$
And, furthermore, the dual invariant measure $\mu^{*}$ is an
equilibrium state for the continuous potential $-\log
\Psi^{*}(f)(w^{*})$ in the sense that
$$
0= P (-\log D^{*}(f))= h_{\mu^{*}} (\sigma^{*}) -\int_{\Sigma^{*}}
\log \Psi^{*}(f) (w^{*}) d\mu^{*}(w^{*})
$$
$$
=\sup \big\{ h_{\nu } (\sigma^{*}) -\int_{\Sigma^{*}} \log
\Psi^{*}(f) (w^{*}) d\nu (w^{*})\; |\; \hbox{$\nu$ is a
$\sigma^{*}$-invariant measure}\big\}.
$$
\end{theorem}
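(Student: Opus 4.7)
The plan is to invoke the characterization (\ref{qcrepresentation}) of $\mathcal{T}\mathcal{U}\mathcal{S}$ to write $\Psi^{*}(w^{*}) = D^{*}(f)(w^{*})$ for some $f \in \mathcal{U}\mathcal{S}$, then run the same construction as for Theorem~\ref{qsder1} and read off the required conclusions using the $g$-measure machinery of \S12. Concretely, first I would build an $f$-invariant symmetric probability measure $\mu_{geo}$ on $T$ as in Theorem~\ref{symmetricmeasure}, by taking a weak limit of Ces\`aro averages $\nu_{n}=\frac{1}{n}\sum_{k=0}^{n-1}(f^{k})_{*}\mathrm{Leb}$. Lift $\mu_{geo}$ to a $\sigma$-invariant measure $\mu$ on $\Sigma$ via $\mu([w_{n}])=\mu_{geo}(I_{w_{n}})$, and pass to the dual $\sigma^{*}$-invariant probability measure $\mu^{*}$ on $\Sigma^{*}$ via the recipe of \S11.

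Next, Theorem~\ref{qsder1} applied to this $f$ tells us that $\sigma^{*}$ is $C^{1}$ in the metric $d(\cdot,\cdot)$ induced by $\mu^{*}$ with
$$
\frac{d\sigma^{*}}{dw^{*}}(w^{*}) \;=\; D^{*}(f)(w^{*}) \;=\; \Psi^{*}(w^{*})
$$
uniformly on $\Sigma^{*}$; so $\mu^{*}$ is by definition a dual geometric Gibbs type measure for $-\log\Psi^{*}$ with full topological support (the support of $\mu_{geo}$ is the whole $T$, since its distribution is a symmetric homeomorphism). Then I would check that $g(w^{*}):=1/\Psi^{*}(w^{*})$ is a $g$-function for $\sigma^{*}$: this is exactly the summation identity
$$
\sum_{j=0}^{d-1}\frac{1}{\Psi^{*}(w^{*}j)} \;=\; 1, \qquad w^{*}\in\Sigma^{*},
$$
which is the $d\ge 2$ analogue of (\ref{summationcondition}) and holds for every dual derivative by the telescoping $|I_{v_{n-1}}|=\sum_{j}|I_{v_{n-1}j}|$.

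Now I would identify the Radon--Nikod\'ym derivative $D_{\mu^{*}}$. By definition
$$
D_{\mu^{*}}(w^{*}) \;=\; \lim_{n\to\infty}\frac{\mu^{*}([w_{n}^{*}])}{\mu^{*}([\sigma^{*}(w_{n}^{*})])}
$$
for $\tilde{\mu}^{*}$-a.e.\ $w^{*}$; but the uniform convergence just produced in Theorem~\ref{qsder1} forces the reciprocal of this limit to equal $\Psi^{*}(w^{*})$ at \emph{every} $w^{*}\in\Sigma^{*}$, so in particular
$$
D_{\mu^{*}}(w^{*}) \;=\; \frac{1}{\Psi^{*}(w^{*})}\qquad \tilde{\mu}^{*}\text{-a.e.}
$$
Invoking the equivalence theorem of \S12 with the $g$-function $g=1/\Psi^{*}$, condition (ii) gives $\mathcal{L}^{*}_{g}\mu^{*}=\mu^{*}$, so $\mu^{*}$ is a $g$-measure; and condition (iv) gives $P(-\log\Psi^{*})=0$ together with the variational identity, which is exactly the equilibrium-state statement.

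The main obstacle I anticipate is purely in the transition from the uniform derivative statement of Theorem~\ref{qsder1} to the Radon--Nikod\'ym derivative statement that the \S12 machinery expects. In principle $D_{\mu^{*}}$ is only defined $\tilde{\mu}^{*}$-a.e.\ as a martingale limit, and one must be careful that $\tilde{\mu}^{*}\ll\mu^{*}$ (or at least comparable on cylinders) so that the pointwise limit supplied by Theorem~\ref{qsder1} really represents $D_{\mu^{*}}$; here this is automatic because $\Psi^{*}$ is strictly positive and continuous on a compact space, hence bounded away from $0$ and $\infty$, which makes $\mu^{*}$ and $\tilde{\mu}^{*}$ mutually absolutely continuous. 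Everything else is either a restatement of Theorem~\ref{qsder1} or a direct application of the Ledrappier--Walters style equivalence proved in \S12, so no further technical input is needed.
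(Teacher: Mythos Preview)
Your proposal is correct and follows essentially the same route as the paper: the paper states Theorem~\ref{der2} as a direct conclusion of Theorem~\ref{symmetricmeasure} and Theorem~\ref{qsder1} combined with the $g$-measure equivalences of \S12, and you have spelled out exactly this chain, including the identification $\Psi^{*}=D^{*}(f)$ via (\ref{qcrepresentation}) and the passage from the uniform derivative statement to the Radon--Nikod\'ym/equilibrium conclusions through the Ledrappier--Walters theorem.
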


\section{Symmetric invariant measure and metric entropy}

Suppose $\tau\in {\mathcal T}{\mathcal U}{\mathcal S}$ and suppose $f\in \tau$. From Theorem~\ref{symmetricmeasure},
there is an $f$-invariant symmetric measure $\mu_{geo}$. Let $h(z)=\mu_{geo}([1,z])$ be the distribution function
of $\mu_{geo}$. Then
$$
\tilde{f} =h\circ f\circ h^{-1}\in \tau
$$
preserves the Lebesgue measure $Leb$ on $T$. This means that the Lebesgue measure $Leb$ is $\tilde{f}$-invariant.
Let $h_{\mu_{geo}}(f)$ be the metric entropy of $f$ with respect to $\mu_{geo}$. Then we have that
$$
h_{\mu_{geo}}(f) = h_{Leb} (\tilde{f})=h_{\mu^{*}}(\sigma^{*}).
$$
From Theorem~\ref{der2}, $h_{\mu_{geo}}(f)$ is a positive number.

If the topological degree of $f$ is $d\geq 2$. Then the topological entropy of $f$ is $\log d$, which is the maximum value of the metric entopy
$h_{\mu_{geo}}(f)$ over all $\tau\in {\mathcal T}{\mathcal U}{\mathcal S}$ and all $f\in \tau$ with their symmetric $f$-invariant
measures $\mu_{geo}$.

\vspace*{10pt}
\begin{theorem}~\label{infvalue}
The infimum of the metric entropy $h_{\mu_{geo}}(f)$ over all $\tau\in {\mathcal T}{\mathcal U}{\mathcal S}$ and all $f\in \tau$ with their symmetric $f$-invariant measures $\mu_{geo}$ is zero.
\end{theorem}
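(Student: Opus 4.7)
The plan is to exhibit a sequence of uniformly symmetric circle endomorphisms whose metric entropies tend to zero. Since metric entropy is always nonnegative, this will establish the claimed infimum.

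For each small $\epsilon>0$ I would construct a $C^{\infty}$ Lebesgue-preserving expanding circle endomorphism $f_\epsilon$ of degree $d$, as follows. Prescribe the $d$ inverse branches $g_{i,\epsilon}:[0,1]\to[0,1]$ ($0\leq i\leq d-1$) as smooth diffeomorphisms onto consecutive subintervals $J_{0,\epsilon}=[0,1-(d-1)\epsilon]$ and $J_{i,\epsilon}$ of length $\epsilon$ for $i\geq 1$. Pointwise Lebesgue preservation is equivalent to $\sum_{i=0}^{d-1} g_{i,\epsilon}'(y)\equiv 1$, which I arrange by taking $g_{i,\epsilon}'$ close to the bulk values $1-(d-1)\epsilon$ (for $i=0$) and $\epsilon$ (for $i\geq 1$) away from $y=0,1$, and by adjusting them in thin windows of width $\eta_\epsilon$ near the two endpoints so that $g_{i,\epsilon}'(0)=g_{i,\epsilon}'(1)=1/d$ for every $i$. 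The latter matching makes $f_\epsilon$ of class $C^{\infty}$ across branch points of the Markov partition, since from the two sides of the branch point between $J_{i,\epsilon}$ and $J_{i+1,\epsilon}$ we get $f_\epsilon'=1/g_{i,\epsilon}'(1)=d=1/g_{i+1,\epsilon}'(0)$. Expanding is automatic ($g_{i,\epsilon}'<1$ everywhere). So $f_\epsilon\in{\mathcal C}^{1+}$, and Example~\ref{holderexample} gives that $f_\epsilon\in{\mathcal U}{\mathcal S}$. Since $f_\epsilon$ preserves Lebesgue measure and the identity is a (trivially) symmetric circle homeomorphism, the Lebesgue measure is an $f_\epsilon$-invariant symmetric measure and qualifies as $\mu_{geo,\epsilon}$.

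The entropy is then computed by Rokhlin's formula for $C^1$ expanding maps with absolutely continuous invariant measure,
$$
h_{Leb}(f_\epsilon) \;=\; \int_0^1 \log f_\epsilon'(x)\,dx \;=\; -\sum_{i=0}^{d-1}\int_0^1 g_{i,\epsilon}'(y)\log g_{i,\epsilon}'(y)\,dy.
$$
Off the endpoint windows (total measure $\leq 2\eta_\epsilon$) the integrand is essentially the constant Shannon entropy $H(1-(d-1)\epsilon,\epsilon,\ldots,\epsilon)=O(\epsilon\log(1/\epsilon))$; on the windows the integrand is bounded by $\log d$. Choosing $\eta_\epsilon=\epsilon^2$ gives $h_{Leb}(f_\epsilon)=O(\epsilon\log(1/\epsilon))\to 0$ as $\epsilon\to 0^{+}$, and the theorem follows.

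The main obstacle is combining, within a single $C^{\infty}$ construction, the four coupled constraints: (i) pointwise Lebesgue preservation $\sum_i g_{i,\epsilon}'\equiv 1$, which couples all branches; (ii) $C^1$-matching $g_{i,\epsilon}'(0)=g_{i,\epsilon}'(1)=1/d$ at branch-point endpoints; (iii) the expanding condition $g_{i,\epsilon}'<1$ everywhere; (iv) keeping the transition windows sufficiently narrow that their entropy contribution is $o(1)$. All four can be realized concretely by adding a mean-zero smooth perturbation of the piecewise-constant ansatz, supported in $\eta_\epsilon$-windows near $y=0,1$, chosen to interpolate smoothly between the bulk value and $1/d$ at each endpoint and tuned to preserve the integrals $\int_0^1 g_{i,\epsilon}'(y)\,dy=|J_{i,\epsilon}|$ so that the branches do hit their prescribed intervals.
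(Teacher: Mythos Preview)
Your proposal is correct and follows essentially the same strategy as the paper: construct a one-parameter family of Lebesgue-preserving $C^{1+\alpha}$ expanding circle endomorphisms obtained by smoothing an increasingly unbalanced piecewise-linear model, and then compute the metric entropy via Rohlin's formula $h_{Leb}(f)=\int_T\log f'\,dx$ to see it tends to zero. The only difference is cosmetic: the paper works out the smoothing for $d=2$ by modifying the map near the single corner, whereas you organize the smoothing through the inverse branches with all endpoint slopes matched at $1/d$, which handles general $d$ symmetrically; both constructions meet the same four constraints you list and lead to the same $O(\epsilon\log(1/\epsilon))$ entropy estimate.
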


\begin{proof}
To prove this theorem, we construct a family $\{f_{s}\}_{0<s<1}$ of orientation-preserving
circle endomorphisms such that each of them is $C^{1+\alpha}$ expanding for some $0<\alpha\leq 1$ and preserves the
Lebesgue measure. Then the equivalent class $[f_{s}]$ is a point in
$\tau\in {\mathcal T}{\mathcal U}{\mathcal S}$.
Moreover, we prove that the metric entropy $h_{Leb}(f_{s})$ tends to $0$ as $s\to 1^{-}$.
Without loss of generality, we prove this theorem for $d=2$ as follows.

First let us consider the unit circle $T$ as ${\mathbb R}/{\mathbb Z}$. Let $[0,1]$ be a copy of $T$ such that $0=1$.
Consider a piecewise smooth expanding map for any $0<s<1$,
$$
L(x) =\left\{ \begin{array}{ll}
              \frac{x}{s}, & x\in [0, s];\cr
              \frac{x-s}{1-s}, & x\in [s,1]
              \end{array}
      \right.
$$
The Lebesgue measure $Leb$ on $[0,1]$ is the the unique smooth $L$-invariant
measure and the metric entropy
$$
h_{Leb} (L) = s\log s +(1-s) \log (1-s).
$$
Thus $h_{Leb}(L) \to 0$ as $s\to 1^{-}$. Next we will smooth $L$ such
that the resulting map is a $C^{1+\alpha}$ circle expanding endomorphism $f$ and
still preserves the Lebesgue measure $Leb$ on $T$.

Let $r=1-s$. Then
$$
s +r=1.
$$

Let $0<\alpha\leq 1$. Consider the interval $[-r, s]$ and construct a $\alpha$-H\"older
continuous function $\phi(x)$ on it such that
\begin{itemize}
\item[i)]
$$
\phi (x) =\left\{ \begin{array}{ll}
                  r^{-1}, & x\in [-r, -r/2]\cr
                  s^{-1}, & x\in [0,s]
                  \end{array}
          \right.
$$
\item[ii)] $\int_{-r}^{0} \phi (\xi) d\xi =1$,
\item[iii)] $r^{-1}\leq  \phi (x) \leq M r^{-1}, \quad \forall x\in [-r,-r^{2}]$,
\item[iv)] $s^{-1}\leq  \phi (x)\leq r^{-1}, \quad x\in [-r^{2},0]$.
\end{itemize}
Then
$$
\tilde{f}_{0} (x) =\int_{0}^{x}\phi (\xi)d\xi=\frac{x}{s}:
[0,s]\to [0,1]
$$
is a $C^{1+\alpha}$-diffeomorphism and
$$
\tilde{f}_{1}(x) =\int_{s}^{x} \phi (\xi-1)d\xi : [s, 1]\to [0,1]
$$
is a $C^{1+\alpha}$-diffeomorphism. Furthermore,
$$
\tilde{f}_{1}' (1-)= \phi (0-) =\tilde{f}_{0}'(0+) =\phi
(0+)=s^{-1}.
$$
So we define a circle endomorphism $\tilde{f}$ which is
$C^{1+\alpha}$ on $[0,1]\setminus \{s\}$. Moreover,
the derivative $\tilde{f}'(x)\geq \min\{s^{-1}, r^{-1}\}>1$ for any $x\in [0,1]\setminus \{s\}$.

Let $\tilde{g}_{0}$ and $\tilde{g}_{1}$ be the inverses of $\tilde{f}_{0}$ and $\tilde{f}_{1}$.
Consider the interval $I_{0}=[1/2, 1]$. Since $f_{1}'(x) =r^{-1}$ on $[s, (1+s)/2]$,
we have $f_{1}((1+s)/2) =1/2$. Therefore,
the preimage of $I_{0}$
under $\tilde{f}$ is the union of two intervals
$[s/2, s]=\tilde{g}_{0}(I_{0})$ and $[(1+s)/2, 1]=\tilde{g}_{1}(I_{0})$

Define $g_{1} (x)=  \tilde{g}_{1}(x)$ on $[0,1]$. Then its inverse $f_{1}=\tilde{f}_{1}: [s,1]\to [0,1]$
is a $C^{1+\alpha}$ diffeomorphism.
Denote
$$
\psi_{1} (x) = g_{1}'(x) = \frac{1}{\phi (g_{1}(x)-1)}, \quad x\in [0,1].
$$
Then
$$
g_{1} (x) =s+\int_{0}^{x} \psi_{1}(\xi)d\xi, \quad x\in [0,1].
$$

Define
$$
\psi_{0} (x) = 1- \psi_{1}(x) = 1-\frac{1}{\phi (g_{1}(x)-1)},
\quad x\in [0,1].
$$
Then $\psi_{0} (x) = s$ for $x\in [0,1/2]$ since in this case $g_{1}(x) \in [s, (1+s)/2]$ and $g_{1}(x)-1\in [-r, -r/2]$.
Define
$$
g_{0}(x) =\int_{0}^{x} \psi_{0} (\xi)d\xi, \quad x\in [0,1].
$$
It is clearly that $g_{0}(0)=0$ and $g_{0}(1)=s$. So
$$
g_{0}: [0,1]\to [0,s]
$$
is a $C^{1+\alpha}$-diffeomorphism. Let
$$
f_{0}(x): [0,s]\to [0,1]
$$
be the inverse of $g_{0}(x)$. Then it is a
$C^{1+\alpha}$-diffeomorphism. Furthermore,
$$
f_{0}'(s-)= \frac{1}{g_{0}'(1-)}=\frac{1}{1-\frac{1}{\phi(0-)}}=
r^{-1}=f_{1}'(s+).
$$
Thus
$$
f(x) =\left\{ \begin{array}{ll}
              f_{0}(x), & x\in [0, s];\cr
              f_{1}(x), & x\in [s,1]
              \end{array}
      \right.
$$
is a $C^{1+\alpha}$ expanding circle endomorphism.

For any $x\in T$, let $\{ x_{0}, x_{1}\}=f^{-1}(x)$ such that $x_{0}\in [0,s]$ and $x_{1}\in [s,1]$. Then we have that
$$
\frac{1}{f'(x_{0})} +\frac{1}{f'(x_{1})} = g_{0}'(x) +g_{1}'(x) = \psi_{0}(x) +\psi_{1}(x) =1.
$$
This implies that for any interval $J$ of $T$,
$$
Leb (f^{-1}(J)) =Leb (g_{0}(J)) + Leb(g_{1}(J)) =Leb (J).
$$
So $f$ preserves the Lebesgue
measure.

Now we prove that the metric entropy $h_{Leb}(f_{s})$ tends to $0$ as $s\to 1^{-}$.

Let $a=1-f(1-r^{2})$. That is $1-a=f(1-r^{2})$. Since $\phi (x) \leq r^{-1}$ for $x\in [-r^{2},0]$,
$$
a=f(1)-f(1-r^{2}) =\int_{1-r^{2}}^{1} \phi (\xi -1)d\xi \leq r^{2}r^{-1} =r.
$$

If $1-a\leq x\leq 1$, then $1-r^{2}\leq g_{1}(x) \leq 1$. This implies that
$\phi (g_{1}(x)-1)\leq r^{-1}$, that is,
$$
g_{1}'(x)= \psi_{1}(x) \geq r, \quad \forall x\in [1-a,1].
$$
Hence
$$
g_{0}'(x) =1-g_{1}(x) \leq 1-r=s, \quad \forall x\in [1-a,1].
$$
This implies that
$$
s-g_{0}(1-a) =g_{0}(1) -g_{0}(1-a)=\int_{1-a}^{1} g_{0}'(\xi) d\xi \leq sa.
$$
Furthermore,
$$
g_{0}(1-a) \geq s-sa\geq s-sr.
$$

On the other hand, if $0\leq x\leq 1-a$, then $s\leq g_{1}(x) \leq 1-r^{2}$ and $-r\leq g_{1}(x) \leq r^{-2}$.
This implies that
$$
\psi_{1}(x) =\frac{1}{\phi(g_{1}(x)-1)} \geq r, \quad \forall x\in [0,1-a].
$$
Hence
$$
g_{0}'(x) = \psi_{0}(x) =1-\psi_{1}(x) \geq 1-r, \quad \forall x\in [0,1-a].
$$
It follows that for $0\leq x\leq g_{0}(1-a)$,
$$
f'(x)= \frac{1}{g_{0}'(f_{0}(x))} \leq (1-r)^{-1}.
$$

Note that for all $x\in T$, $\phi(x) \leq Mr^{-1}$. It implies that $f'(x) \leq Mr^{-1}$ for all $x\in T$.

Since the Lebesgue measure $Leb$ is $f$-invariant, by Rohlin's formula
$$
h_{Leb} (f) =\int_{T}\log f'(\xi)d\xi
=\int_{0}^{g_{0}(1-a)} \log f'(\xi)d\xi + \int_{1}^{g_{0}(1-a)} \log f'(\xi)d\xi
$$
$$
\leq g_{0}(1-a) \log (1-r)^{-1} +\big( 1-g_{0}(1-a)\big) \log (Mr^{-1})
$$
$$
\leq
-\log s +(1-(s-sr)) \log (Mr^{-1}) \leq -\log s -r(1+s) \log (M^{-1}r).
$$
When $s\to 1^{-}$, $r\to 0^{+}$. This implies
$$
h_{Leb}(f_s) \to 0 \quad \hbox{as} \quad s\to 1^{-}.
$$
We have completed the proof.
\end{proof}

Let $\Psi^{*}(w^{*})$ be the function model of $\tau\in {\mathcal T}{\mathcal U}{\mathcal S}$.
Let $\mu^{*}$ be a dual geometric Gibbs type measure for
the continuous potential $-\log \Psi^{*}(w^{*})$. Let $\mu_{geo}$ be the corresponding symmetric measure
for an $f\in\tau$. Then $h_{\mu^{*}}(\sigma^{*}) =h_{\mu_{geo}}(f)$. Finally, as a consequence of Theorem~\ref{infvalue}, we have that

\begin{theorem}
The maximum value of the metric entropy $h_{\mu^{*}}(\sigma^{*})$ over all $\Psi^{*}(w^{*})\in {\mathcal T}{\mathcal U}{\mathcal S}$ with their dual geometric Gibbs type measures $\mu^{*}$ is $log d$. And the infimum of the metric entropy $h_{\mu^{*}}(\sigma^{*})$ over all $\Psi^{*}(w^{*})\in  {\mathcal T}{\mathcal U}{\mathcal S}$ with their dual geometric Gibbs type measures $\mu^{*}$ is $0$.
\end{theorem}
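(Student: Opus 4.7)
The plan is to reduce both claims to the entropy identity $h_{\mu^{*}}(\sigma^{*}) = h_{\mu_{\mathrm{geo}}}(f)$ that was already established immediately before the statement, and then to argue the upper bound by the variational principle and the lower (infimum) bound by directly invoking Theorem~\ref{infvalue}.

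For the maximum, I would first note that every $f\in {\mathcal U}{\mathcal S}$ of degree $d\geq 2$ is topologically conjugate to $q_{d}(z)=z^{d}$ by Corollary~\ref{quasisymmetriccojugacy}, hence has topological entropy $h_{\mathrm{top}}(f)=\log d$. By the variational principle, any $f$-invariant probability measure $\nu$ satisfies $h_{\nu}(f)\leq \log d$; in particular $h_{\mu_{\mathrm{geo}}}(f)\leq \log d$ for the symmetric invariant measure produced in Theorem~\ref{symmetricmeasure}. Transferring via the identity $h_{\mu^{*}}(\sigma^{*})=h_{\mu_{\mathrm{geo}}}(f)$ gives $h_{\mu^{*}}(\sigma^{*})\leq \log d$ for every $\Psi^{*}\in{\mathcal T}{\mathcal U}{\mathcal S}$. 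To see the bound is attained, I would take the basepoint $\Psi^{*}$ corresponding to $(q_{d},id)$: the Lebesgue measure on $T$ is $q_{d}$-invariant and is symmetric (its distribution function is the identity, obviously a symmetric homeomorphism), so the construction in Theorem~\ref{symmetricmeasure} produces $\mu_{\mathrm{geo}}=\mathrm{Leb}$ at this point. Rohlin's formula then yields
\[
h_{\mathrm{Leb}}(q_{d})=\int_{T}\log q_{d}'(\xi)\,d\xi=\log d,
\]
so the corresponding dual Gibbs type measure $\mu^{*}$ at the basepoint realizes the value $\log d$. This verifies the maximum is $\log d$ and is attained.

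For the infimum, the argument is immediate from Theorem~\ref{infvalue}: that theorem constructs, for each $0<s<1$, a $C^{1+\alpha}$ expanding circle endomorphism $f_{s}\in {\mathcal C}^{1+}\subset {\mathcal U}{\mathcal S}$ preserving the Lebesgue measure with $h_{\mathrm{Leb}}(f_{s})\to 0$ as $s\to 1^{-}$. Since $f_{s}$ preserves Lebesgue measure, the symmetric invariant measure associated to $[f_{s}]\in {\mathcal T}{\mathcal U}{\mathcal S}$ is just Lebesgue, so $h_{\mu_{\mathrm{geo}}}(f_{s})=h_{\mathrm{Leb}}(f_{s})$. Applying the identity $h_{\mu^{*}}(\sigma^{*})=h_{\mu_{\mathrm{geo}}}(f)$ to the dual derivatives $\Psi_{s}^{*}=D^{*}(f_{s})\in {\mathcal T}{\mathcal U}{\mathcal S}$ gives a sequence of points in ${\mathcal T}{\mathcal U}{\mathcal S}$ with $h_{\mu_{s}^{*}}(\sigma^{*})\to 0$. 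Combined with positivity of metric entropy, this shows the infimum equals $0$.

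The only mild technical point worth checking (and the closest thing to an obstacle) is that the symmetric invariant measure $\mu_{\mathrm{geo}}$ produced by Theorem~\ref{symmetricmeasure} really does coincide with $\mathrm{Leb}$ both at the basepoint $q_{d}$ and for each $f_{s}$; this is because in each of those cases $\mathrm{Leb}$ is itself invariant, so the Ces\`aro averages $\nu_{n}=\frac{1}{n}\sum_{k=0}^{n-1}(f^{n})_{*}\mathrm{Leb}$ are all equal to $\mathrm{Leb}$, and every weak limit is $\mathrm{Leb}$. Once this is noted, the theorem follows with no further computation and no remaining obstacle.
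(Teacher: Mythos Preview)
Your proposal is correct and follows the same route as the paper: the theorem is stated there simply as a consequence of Theorem~\ref{infvalue} together with the entropy identity $h_{\mu^{*}}(\sigma^{*})=h_{\mu_{\mathrm{geo}}}(f)$ and the remark that the topological entropy $\log d$ is the maximum of the metric entropies. Your write-up is in fact more explicit than the paper's, since you spell out attainment of the maximum at the basepoint via Rohlin's formula and check that the symmetric invariant measure coincides with Lebesgue when Lebesgue is already invariant; these details are implicit in the paper but not written out.
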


\begin{remark}
The infimum of the metric entropy for all Anosov diffeomorphisms of a smooth manifold with their SRB measures has been studied in a recent paper~\cite{HuJiangJiang}. The infimum of the metric entropy for all area-preserving Anosov diffeomorphisms of a smooth manifold with their SRB measures is still an open problem. Theorem~\ref{infvalue} answers this problem in the one-dimensional case too.
\end{remark}

\begin{remark} The family $\{f_{s}\}_{0<s<1}$ constructed in the proof of Theorem~\ref{infvalue} is totally degenerate. This means that $\tau_{s}=[f_{s}]$ tends to the boundary of the Teichm\"uler space but its limit can not be seen on the boundary. This is different from the family constructed in~\cite{HuJiangJiang} where the family tends to the boundary of the space of all Anosov diffeomorphisms with a limiting point on the boundary. The limiting point is an almost hyperbolic diffeomorhism. This is also an interesting problem in the Teichm\"uller theory of Riemann surfaces, that is, which curve is totally degenerate and which curve tends to a surface with a parabolic node. The construction of the families in the proof of Theorem~\ref{infvalue} and in~\cite{HuJiangJiang} provides some idea to study this problem.
\end{remark}


\vspace*{20pt}
\bibliographystyle{amsalpha}

\end{document}